\newcommand{\ed}{\color{black}}
\newcommand{\cprime}{$'$}
\numberwithin{equation}{section}
\renewcommand\subsection{\@startsection{subsection}{2}%
  \z@{-.5\linespacing\@plus-.7\linespacing}{.3\linespacing}%
  {\normalfont\bfseries}}
\def\1{\raisebox{2pt}{\rm{$\chi$}}}
\newtheorem{theorem}[equation]{Theorem}
\newtheorem{lemma}[equation]{Lemma}
\theoremstyle{definition}
\newtheorem{definition}[equation]{Definition}
\newtheorem{remark}[equation]{Remark}
\newtheorem{example}[equation]{Example}
\newcommand{\R}{{\mathbb R}}
\newcommand{\N}{{\mathbb N}}
\newcommand\diam{\operatorname{diam}}
\newcommand{\Ha}{{\mathcal H}}
\DeclareMathOperator{\ucodima}{\overline{co\,dim}_A}
\DeclareMathOperator{\cp}{cap}
\gdef\eeaa#1pt{#1}}      
\def\accentadjtext#1{\setbox0\hbox{$#1$}\kern   
                \expandafter\eeaa\the\fontdimen1\textfont1 \ht0 }
\def\accentadjscript#1{\setbox0\hbox{$#1$}\kern 
                \expandafter\eeaa\the\fontdimen1\scriptfont1 \ht0 }
\def\accentadjscriptscript#1{\setbox0\hbox{$#1$}\kern   
                \expandafter\eeaa\the\fontdimen1\scriptscriptfont1 \ht0 }
\def\accentadjtextback#1{\setbox0\hbox{$#1$}\kern       
                -\expandafter\eeaa\the\fontdimen1\textfont1 \ht0 }
\def\accentadjscriptback#1{\setbox0\hbox{$#1$}\kern     
                -\expandafter\eeaa\the\fontdimen1\scriptfont1 \ht0 }
\def\accentadjscriptscriptback#1{\setbox0\hbox{$#1$}\kern 
                -\expandafter\eeaa\the\fontdimen1\scriptscriptfont1 \ht0 }
\def\itoverline#1{{\mathsurround0pt\mathchoice
        {\rlap{$\accentadjtext{\displaystyle #1}
                \accentadjtext{\vrule height1.593pt}
                \overline{\phantom{\displaystyle #1}
                \accentadjtextback{\displaystyle #1}}$}{#1}}
        {\rlap{$\accentadjtext{\textstyle #1}
                \accentadjtext{\vrule height1.593pt}
                \overline{\phantom{\textstyle #1}
                \accentadjtextback{\textstyle #1}}$}{#1}}
        {\rlap{$\accentadjscript{\scriptstyle #1}
                \accentadjscript{\vrule height1.593pt}
                \overline{\phantom{\scriptstyle #1}
                \accentadjscriptback{\scriptstyle #1}}$}{#1}}
        {\rlap{$\accentadjscriptscript{\scriptscriptstyle #1}
                \accentadjscriptscript{\vrule height1.593pt}
                \overline{\phantom{\scriptscriptstyle #1}
                \accentadjscriptscriptback{\scriptscriptstyle #1}}$}{#1}}}}
\newcommand{\iol}{\itoverline}
\newcommand{\eps}{{\varepsilon}}
\def\1{\raisebox{2pt}{\rm{$\chi$}}}
\newcommand{\Lip}{\operatorname{Lip}}
\def\vint_#1{\mathchoice%
        {\mathop{\kern 0.2em\vrule width 0.6em height 0.69678ex depth -0.58065ex
                \kern -0.8em \intop}\nolimits_{\kern -0.4em#1}}%
        {\mathop{\kern 0.1em\vrule width 0.5em height 0.69678ex depth -0.60387ex
                \kern -0.6em \intop}\nolimits_{#1}}%
        {\mathop{\kern 0.1em\vrule width 0.5em height 0.69678ex
            depth -0.60387ex
                \kern -0.6em \intop}\nolimits_{#1}}%
        {\mathop{\kern 0.1em\vrule width 0.5em height 0.69678ex depth -0.60387ex
                \kern -0.6em \intop}\nolimits_{#1}}}
\def\vintslides_#1{\mathchoice%
        {\mathop{\kern 0.1em\vrule width 0.5em height 0.697ex depth -0.581ex
                \kern -0.6em \intop}\nolimits_{\kern -0.4em#1}}%
        {\mathop{\kern 0.1em\vrule width 0.3em height 0.697ex depth -0.604ex
                \kern -0.4em \intop}\nolimits_{#1}}%
        {\mathop{\kern 0.1em\vrule width 0.3em height 0.697ex depth -0.604ex
                \kern -0.4em \intop}\nolimits_{#1}}%
        {\mathop{\kern 0.1em\vrule width 0.3em height 0.697ex depth -0.604ex
                \kern -0.4em \intop}\nolimits_{#1}}}
\newcommand{\intav}{\vint}
\newcommand{\aveint}[2]{\mathchoice%
        {\mathop{\kern 0.2em\vrule width 0.6em height 0.69678ex depth -0.58065ex
                \kern -0.8em \intop}\nolimits_{\kern -0.45em#1}^{#2}}%
        {\mathop{\kern 0.1em\vrule width 0.5em height 0.69678ex depth -0.60387ex
                \kern -0.6em \intop}\nolimits_{#1}^{#2}}%
        {\mathop{\kern 0.1em\vrule width 0.5em height 0.69678ex depth -0.60387ex
                \kern -0.6em \intop}\nolimits_{#1}^{#2}}%
        {\mathop{\kern 0.1em\vrule width 0.5em height 0.69678ex depth -0.60387ex
                \kern -0.6em \intop}\nolimits_{#1}^{#2}}}
\title{The Haj{\l}asz capacity density condition is self-improving} 
\author[J.\! Canto]{Javier Canto}
\address[J.C.]{BCAM -- Basque Center for Applied Mathematics, Alameda de Mazarredo 14, 48009 Bilbao, Spain}
\email{javier.canto@ehu.eus}
\author[A.V.\! V\"ah\"akangas]{Antti V. V\"ah\"akangas}
\address[A.V.V.]{University of Jyvaskyla, Department of Mathematics and Statistics, P.O. Box 35, FI-40014 University of Jyvaskyla, Finland} 
\email{antti.vahakangas@iki.fi}
\keywords{Analysis on metric spaces,  capacity density condition, Haj{\l}asz gradient}
\subjclass[2020]{35A23,  31E05, 30L99, 42B25, 46E35}
\thanks{J.C.\! is supported by supported by the Ministerio de Econom\'ia y Competitividad (Spain) through grants PID2020-113156GB-I00 and SEV-2017-0718, and by Basque Government through grant IT-641-13 and BERC 2018-2021 and ``Ayuda para la formaci\'on de personal investigador no doctor".}
\begin{document}

\begin{abstract}
We prove a self-improvement  property of a  capacity density condition
for a  nonlocal Haj{\l}asz gradient in complete geodesic spaces. 
The proof relates the
capacity density condition with boundary Poincar\'e inequalities, adapts Keith--Zhong techniques
for establishing local Hardy inequalities and  applies Koskela--Zhong arguments
for proving self-improvement properties of local Hardy inequalities.
This leads to a characterization of
the Haj{\l}asz capacity density condition in terms of a strict upper
bound on the upper  Assouad codimension of the underlying set, which shows the self-improvement property of the Haj{\l}asz capacity density condition. 
\end{abstract}

\maketitle

\section{Introduction}

We introduce a Haj{\l}asz $(\beta,p)$-capacity density condition  in terms of Haj{\l}asz gradients of order $0<\beta\le 1$,  see Sections \ref{s.abstract} and \ref{s.capacity}. Our main result, Theorem \ref{t.main-improvement}, states that this condition is doubly open-ended, that is,  a Haj{\l}asz $(\beta,p)$-capacity density  condition  is self-improving both in $p$ and in $\beta$  if $X$ is a complete geodesic space.  
The study of such conditions can be traced back to the seminal work by Lewis \cite{MR946438}, who established self-improvement of  Riesz $(\beta,p)$-capacity density conditions in $\R^n$.  His result  has  been followed by other works incorporating different techniques often in metric spaces, like nonlinear potential theory \cite{MR1386213,MR1869615},  and local Hardy inequalities \cite{MR3673660}. 

A distinctive feature of our paper is that we prove the self-improvement  of a  capacity density condition
for a  nonlocal  gradient for the first time in metric spaces.
We make use of a recent advance \cite{MR3895752} in Poincar\'e inequalities, whose self-improvement properties were originally shown by Keith--Zhong in their celebrated work \cite{MR2415381}.
In this respect, we join the line of research 
initiated in \cite{KorteEtAl2011}, and continued in \cite{MR3976590, MR4116806}, for bringing together the seemingly  distinct self-improvement properties of capacity density conditions and Poincar\'e inequalities.

We use various techniques and concepts in the proof of Theorem \ref{t.main-improvement}.  
The fundamental idea is to use a geometric concept, more precisely the upper Assouad codimension, and characterize the capacity density with a  strict upper  bound on this codimension. 
 Here we are motivated by the recent approach from \cite{DLV2021}, where the Assouad codimension bound is
used to give necessary and sufficient conditions for certain fractional Hardy inequalities; we also refer to \cite{MR3631460}.
 The principal difficulty is to prove a strict bound on the codimension. To this 
end we relate  the  capacity density condition to boundary Poincar\'e inequalities, and we show their self-improvement roughly speaking in two steps: (1) Keith--Zhong estimates on  maximal functions and (2) Koskela--Zhong estimates on  Hardy inequalities.  For these purposes, respectively,  
we adapt the maximal function methods from
\cite{MR3895752} and the local Hardy arguments from \cite{MR3673660}.

One of the main challenges our method is able to overcome is the nonlocal nature of Haj{\l}asz gradients \cite{MR1876253}.  More specifically, if a function $u$ is
constant in a set $A\subset X$ and $g$ is a Haj{\l}asz gradient of $u$, then $g\mathbf{1}_{X\setminus A}$ is not necessarily
a Haj{\l}asz gradient of $u$.  This fact makes it  impossible  to directly use  the standard localization techniques. More specifically, there is no access to  neither  pointwise glueing lemma nor pointwise Leibniz rule, both of which are  used while proving similar self-improvement properties
 for capacity density conditions involving $p$-weak upper gradients,  for example, by the Wannebo approach \cite{MR1010807} that was used in \cite{MR3673660}  to show corresponding local Hardy inequalities. The Haj{\l}asz gradients satisfy nonlocal versions of these basic tools, both of which we employ in our method.
 
There is a clear advantage to working with Haj{\l}asz gradients:  Poincar\'e inequalities hold for all measures, see Section \ref{s.abstract}. Other types of gradients, 
such  as $p$-weak upper gradients  \cite{MR2867756}, do not have this property and therefore corresponding Poincar\'e inequalities need to be assumed a priori, as was the case in previous works such as \cite{MR1386213, MR1869615,MR3673660,MR3976590,MR4116806}. We remark that this  requirement  already excludes many doubling measures
 in $\R$ equipped with Euclidean distance~ \cite{BjornBuckleyKeith2006}.

Our method has also  a disadvantage. We need to assume that 
$X$ is a complete geodesic space.  These assumptions provide us Lemma \ref{l.measure_to_hausdorff},
Theorem \ref{t.qp_poincare}, Lemma \ref{l.ball_measures},  Lemma \ref{l.continuous}, and few other useful properties. 
We do not know how far these two conditions could be relaxed.
In particular, it would be interesting to know if our main result, Theorem \ref{t.main-improvement},  could be
extended to the more general setting of complete and connected metric spaces.

The outline of this paper is as follows. After a brief discussion on notation and preliminary concepts in Section \ref{s.preliminaries}, Haj{\l}asz gradients are introduced in Section \ref{s.abstract} along with their calculus and various Poincar\'e inequalities. Capacity density condition is discussed in Section \ref{s.capacity}, and some preliminary sufficient and necessary bounds on the Assouad codimension are given in Section~\ref{s.geometry}. The most technical part of the work is contained in Sections \ref{s.truncation},  \ref{s.main} and \ref{s.local_hardy}, in which the analytic framework of the self-improvement is gradually developed. Finally, the main result is given in Section \ref{s.big-results},
 in which we show that various geometrical and analytical conditions are equivalent to the capacity density condition.  The geometrical conditions are open-ended by definition, and hence all analytical conditions
are seen to be self-improving or doubly open-ended.

\section{Preliminaries}\label{s.preliminaries}

%
%

 In this section, we recall the setting from \cite{MR3895752}.
 Our results are based on 
quantitative estimates and absorption arguments, where
it is often crucial to track the dependencies of constants quantitatively.
For this purpose, we will use the following notational convention:
$C({\ast,\dotsb,\ast})$ denotes a positive constant which quantitatively 
depends on the quantities indicated by the $\ast$'s but whose actual
value can change from one occurrence to another, even within a single line. 

\subsection{Metric spaces}\label{s.metric}
Unless otherwise specified, we assume that $X=(X,d,\mu)$ is a metric measure space equipped with a metric $d$ and a 
positive complete Borel
measure $\mu$ such that $0<\mu(B)<\infty$
for all balls $B\subset X$, each of which is always an open set of the form \[B=B(x,r)=\{y\in X\,:\, d(y,x)<r\}\] with $x\in X$ and $r>0$.
As in \cite[p.~2]{MR2867756},
we extend $\mu$ as a Borel regular (outer) measure on $X$.
We remark that the space $X$ is separable
under these assumptions, see \cite[Proposition 1.6]{MR2867756}.
We  also assume that $\# X\ge 2$ and  
that the measure $\mu$ is {\em doubling}, that is,
there is a constant $c_\mu> 1$, called
the {\em doubling constant of $\mu$}, such that
\begin{equation}\label{e.doubling}
\mu(2B) \le c_\mu\, \mu(B)
\end{equation}
for all balls $B=B(x,r)$ in $X$. 
Here we use for $0<t<\infty$ the notation $tB=B(x,tr)$. 
In particular, for all balls $B=B(x,r)$ that are centered at $x\in A\subset X$ with radius
$r\le \mathrm{diam}(A)$, we have that
\begin{equation}\label{e.radius_measure}
\frac{\mu(B)}{\mu(A)}\ge 2^{-s}\bigg(\frac{r}{\mathrm{diam}(A)}\bigg)^s\,,
\end{equation}
where $s=\log_2 c_\mu>0$. We refer to \cite[p.~31]{MR1800917}.
If $X$ is connected, then the doubling measure $\mu$ is also
{\em reverse doubling} in the sense that
there is a constant  $0<c_R=C(c_\mu)<1$ such that
\begin{equation}\label{e.rev_dbl_decay}
\mu(B(x,r/2))\le c_R\, \mu(B(x,r))
\end{equation}
for every $x\in X$ and 
$0<r<\diam(X)/2$.
See for instance~\cite[Lemma~3.7]{MR2867756}.


\subsection{Geodesic spaces}
Let $X$ be a metric space satisfying the conditions stated in Section \ref{s.metric}.
By a {\em curve} we mean a nonconstant, rectifiable, continuous
mapping from a compact interval of $\R$ to $X$;  we tacitly assume
that all curves are parametrized by their arc-length.
We say that $X$ is a {\em geodesic space}, if 
every pair of points in $X$
can be joined by a curve whose length is equal to the distance between the two points. 
In particular, it easily follows that
\begin{equation}\label{e.diams}
0<\diam(2B)\le 4\diam(B)
\end{equation}
for all balls $B=B(x,r)$ in a geodesic space $X$.
 Since  geodesic spaces are connected, the measure $\mu$ is
reverse doubling in a geodesic space $X$ in the sense that
inequality \eqref{e.rev_dbl_decay} holds.

The following lemma is \cite[Lemma 12.1.2]{MR3363168}.

\begin{lemma}\label{l.continuous}
Suppose that $X$ is a geodesic space and $A\subset X$ is a measurable set. 
Then the function
\[
r\mapsto \frac{\mu(B(x,r)\cap A)}{\mu(B(x,r))}\,:\, (0,\infty)\to \R
\]
is continuous whenever $x\in X$.
\end{lemma}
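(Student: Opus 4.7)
The plan is to write $\phi(r) = g(r)/f(r)$ with $f(r) := \mu(B(x,r))$ and $g(r) := \mu(B(x,r)\cap A)$, and to reduce continuity of $\phi$ to the continuity of both $f$ and $g$ separately; since $f(r) > 0$, the quotient is then automatically continuous. Both $f$ and $g$ are nondecreasing, and left continuity is immediate from $B(x,r) = \bigcup_{n \in \mathbb{N}} B(x, r - 1/n)$ by monotone convergence. Right continuity is equivalent to $\lim_{\rho \downarrow r} f(\rho) = \mu(\overline{B}(x,r)) = f(r)$, and since $\overline{B}(x,r) \setminus B(x,r) = S(x,r) := \{y \in X : d(y,x) = r\}$, the entire question reduces to proving $\mu(S(x,r)) = 0$ for every $r > 0$ (the estimate $\mu(S(x,r) \cap A) \leq \mu(S(x,r))$ then handles $g$ simultaneously).

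The central step uses the geodesic hypothesis to establish this sphere bound. Assume $S(x,r) \neq \emptyset$ and fix $\epsilon \in (0, r)$. For each $y \in S(x,r)$ choose a unit-speed geodesic $\gamma_y \colon [0,r] \to X$ from $x$ to $y$; then $\gamma_y(r-\epsilon) \in S(x, r-\epsilon)$ and $d(y, \gamma_y(r-\epsilon)) = \epsilon$. Applying the basic $5$-covering lemma to $\{B(y', \epsilon/25) : y' \in S(x, r-\epsilon)\}$, using the separability of $X$ noted in Section~\ref{s.metric}, yields a countable pairwise disjoint subfamily $\{B(y_i, \epsilon/25)\}_i$ with $y_i \in S(x, r-\epsilon)$ whose dilates $\{B(y_i, \epsilon/5)\}_i$ still cover $S(x, r-\epsilon)$. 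Choosing $i$ with $\gamma_y(r-\epsilon) \in B(y_i, \epsilon/5)$ gives $d(y, y_i) \leq \epsilon + \epsilon/5 < 2\epsilon$, so $S(x,r) \subset \bigcup_i B(y_i, 2\epsilon)$. Iterating \eqref{e.doubling} six times yields $\mu(B(y_i, 2\epsilon)) \leq c_\mu^6 \mu(B(y_i, \epsilon/25))$. Since $d(y_i, x) = r - \epsilon$, the disjoint balls $B(y_i, \epsilon/25)$ all lie inside the annulus $B(x, r - 24\epsilon/25) \setminus B(x, r - 26\epsilon/25)$, hence
\[
\mu(S(x,r)) \leq c_\mu^6 \sum_i \mu(B(y_i, \epsilon/25)) \leq c_\mu^6 \bigl[ f(r - 24\epsilon/25) - f(r - 26\epsilon/25) \bigr].
\]
Letting $\epsilon \downarrow 0$ and invoking the left continuity of $f$ established above drives the right-hand side to $0$, so $\mu(S(x,r)) = 0$.

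The principal obstacle is precisely this sphere-measure estimate: spheres in a doubling metric measure space can carry positive mass in general, so there is no purely measure-theoretic path to right continuity. The geodesic hypothesis is exactly what supplies the transport $y \mapsto \gamma_y(r-\epsilon)$, which together with doubling trades $\mu(S(x,r))$ for a difference of ball measures $f(\cdot)$ evaluated at two radii approaching $r$ from below; without geodesicness (or at least a quantitative substitute) this mechanism is unavailable.
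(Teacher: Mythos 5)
Your proof is correct, and it follows the standard route behind the result the paper merely cites (\cite[Lemma 12.1.2]{MR3363168}): reduce continuity of the ratio to continuity of $r\mapsto\mu(B(x,r))$ and $r\mapsto\mu(B(x,r)\cap A)$, note that only right continuity is at issue, and kill the sphere $S(x,r)$ by pushing it inward along geodesics, covering, and trapping disjoint doubling-comparable balls in a thin annulus whose measure vanishes by the already-established left continuity. All the quantitative steps (the $5r$-covering constants, the six doublings from $\epsilon/25$ to $2\epsilon$, and the containment of the disjoint balls in $B(x,r-24\epsilon/25)\setminus B(x,r-26\epsilon/25)$) check out.
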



The second  
lemma, in turn, is \cite[Lemma 2.5]{MR3895752}.

\begin{lemma}\label{l.ball_measures}
Suppose that $B=B(x,r)$ and $B'=B(x',r')$ are two balls in a geodesic space $X$ such
that $x'\in B$ and $0<r'\le \mathrm{diam}(B)$. Then
$\mu(B')\le c_\mu^3 \mu(B'\cap B)$.
\end{lemma}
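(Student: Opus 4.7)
The plan is to exhibit a small ball $B(y,\rho)$ contained in $B \cap B'$ whose radius is comparable to $r'$; then the doubling condition applied a fixed number of times bounds $\mu(B')$ in terms of $\mu(B(y,\rho))$. The geodesic structure of $X$ is used to position the center $y$ simultaneously well inside $B$ and well inside $B'$.

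First I would set $\rho = r'/4$. Since $r' \le \diam(B) \le 2r$, we have $\rho \le r/2$, and in particular $\rho \le r$. If $x = x'$ I simply take $y = x'$; otherwise let $\gamma \colon [0,L] \to X$ be a geodesic joining $x$ to $x'$, where $L = d(x,x') < r$. Along this geodesic $d(\gamma(t),x) = t$ and $d(\gamma(t),x') = L - t$, so I look for $t \in [0,L]$ with $t \le r - \rho$ and $L - t \le r' - \rho$, that is, $t$ in the interval $[\max(0, L - r' + \rho),\, \min(L, r - \rho)]$. Setting $y = \gamma(t)$, a triangle-inequality check gives $B(y,\rho) \subset B \cap B'$: any $z \in B(y,\rho)$ satisfies $d(z,x) < \rho + t \le r$ and $d(z,x') < \rho + (L-t) \le r'$.

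It remains to compare $\mu(B')$ with $\mu(B(y,\rho))$. Since $d(y,x') \le r' - \rho < r'$, one has $B' \subset B(y, 2r') = B(y, 8\rho)$, and three applications of the doubling condition yield
\[
\mu(B') \le \mu(B(y, 8\rho)) \le c_\mu^3\, \mu(B(y,\rho)) \le c_\mu^3\, \mu(B' \cap B),
\]
which is the desired inequality. The main technical point is verifying that the interval $[\max(0, L - r' + \rho),\, \min(L, r - \rho)]$ is nonempty. The bounds $\rho \le r'$ and $\rho \le r$ (the latter using $r' \le \diam(B) \le 2r$) make the endpoints consistent with $0$ and $L$, while the critical constraint $L - r' + \rho \le r - \rho$ reduces to $L \le r + r'/2$, which is forced by $L < r$. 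This is exactly where the geodesic hypothesis and the bound $r' \le \diam(B)$ enter essentially.
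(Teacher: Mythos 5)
Your argument is correct, and it is essentially the standard proof of this fact: the paper itself only cites \cite[Lemma 2.5]{MR3895752}, whose argument likewise produces a ball of radius comparable to $r'$ centered on the geodesic from $x$ to $x'$ and sitting inside $B\cap B'$, then applies doubling finitely many times. All the details check out, including the nonemptiness of the parameter interval (which indeed follows from $L<r$ and $\rho=r'/4\le r'$, $\rho\le r/2$) and the count of three doublings from $8\rho=2r'$.
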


%
%

\subsection{H\"older and Lipschitz functions}
Let $A\subset X$.
We say that
$u\colon A\to \R$ is  a {\em $\beta$-H\"older function,} with 
an exponent $0<\beta\le 1$ and a constant 
$0\le \kappa <\infty$, if
\[
\lvert u(x)-u(y)\rvert\le \kappa\, d(x,y)^\beta\qquad \text{ for all } x,y\in A\,.
\]
If $u\colon A\to \R$ is a $\beta$-H\"older function, with a constant $\kappa$, then the classical McShane extension
\begin{equation}\label{McShane}
v(x)=\inf \{ u(y) + \kappa \,d(x,y)^\beta\,:\,y\in A\}\,,\qquad x\in X\,,
\end{equation}
defines a $\beta$-H\"older function $v\colon X\to \R$,
with the constant $\kappa$, 
which satisfies
$v|_A = u$; we refer to \cite[pp.~43--44]{MR1800917}.
The set of all $\beta$-H\"older functions $u\colon A\to\R$
is denoted by $\Lip_\beta(A)$. 
The $1$-H\"older functions are also called {\em Lipschitz functions}.

\subsection{Additional notation}

 We write $\N=\{1,2,3,\ldots\}$ and $\N_0=\N\cup\{0\}$. We use the following familiar notation: \[
u_A=\vint_{A} u(y)\,d\mu(y)=\frac{1}{\mu(A)}\int_A u(y)\,d\mu(y)
\]
is the integral average of $u\in L^1(A)$ over a measurable set $A\subset X$
with $0<\mu(A)<\infty$. 
The characteristic function of set $A\subset X$ is denoted by $\mathbf{1}_{A}$; that is, $\mathbf{1}_{A}(x)=1$ if $x\in A$
and $\mathbf{1}_{A}(x)=0$ if $x\in X\setminus A$.
 The distance between a point $x\in X$ and a set $A\subset X$
is denoted by $d(x,A)$.
The closure of a set $A\subset X$ is denoted by $\iol{A}$. In particular, if $B\subset X$ is a ball,
then the notation $\iol{B}$ refers to the closure
of the ball $B$.

\section{Haj{\l}asz gradients}\label{s.abstract}

We work with Haj{\l}asz $\beta$-gradients of order $0<\beta\le 1$ in a metric space $X$. 

\begin{definition}
For each function $u\colon X\to \R$, we let 
$\mathcal{D}_H^{\beta}(u)$ 
be the (possibly empty) family of all measurable functions $g\colon X\to [0,\infty]$ such that
\begin{equation}\label{e.hajlasz}
\lvert u(x)-u(y)\rvert \le d(x,y)^\beta\big( g(x)+g(y) \big)
\end{equation}
almost everywhere, i.e., there exists an exceptional set $N=N(g)\subset X$ for which $\mu(N)=0$ and
inequality \eqref{e.hajlasz} holds for every $x,y\in X\setminus N$. 
A function  $g\in\mathcal{D}^\beta_H(u)$ is called a Haj{\l}asz $\beta$-gradient of the function $u$.
\end{definition}

The Haj{\l}asz $1$-gradients in metric spaces are introduced in \cite{MR1401074}. 
More details on these gradients and their applications can be found, for instance, from \cite{MR1876253,MR1681586,MR3357989,MR1859234,MR2025934}.
The following basic properties are easy to verify for all $\beta$-H\"older functions $u,v\colon X\to \R$
\begin{itemize}
\item[(D1)]  $\lvert a\rvert g\in \mathcal{D}_H^{\beta}(au)$ if $a\in\R$ and $g\in \mathcal{D}_H^{\beta}(u)$;
\item[(D2)] $g_u + g_v\in \mathcal{D}_H^{\beta}(u+v)$ if $g_u\in\mathcal{D}_H^{\beta}(u)$ and $g_v\in\mathcal{D}_H^{\beta}(v)$;
\item[(D3)]  If $f\colon \R \to \R$ is a Lipschitz function with constant $\kappa$, then
$\kappa g\in\mathcal{D}^\beta_H(f\circ u)$ if $g\in\mathcal{D}^\beta_H(u)$.
\end{itemize}

There are both disadvantages and advantages to working with
Haj{\l}asz gradients.
A technical disadvantage is their nonlocality \cite{MR1876253}.
For instance, if $u$ is constant on some set $A\subset X$ and $g\in\mathcal{D}^\beta_H(u)$, then
$g\mathbf{1}_{X\setminus A}$ need not belong to $\mathcal{D}^\beta_H(u)$.
By the so-called glueing lemma, see for instance \cite[Lemma 2.19]{MR2867756}, the corresponding localization property holds for so-called $p$-weak upper gradients,
which makes their application more flexible.
However, the following nonlocal glueing lemma 
from \cite[Lemma 6.6]{MR3895752}
holds in the setting of Haj{\l}asz gradients. 

We recall the proof for convenience.

\begin{lemma}\label{l.glueing}
Let $0<\beta\le 1$ and let $A\subset X$ be a Borel set.
Let $u\colon X\to \R$ be a $\beta$-H\"older function and
suppose that $v\colon X\to \R$ 
is such that $v|_{X\setminus A} =u|_{X\setminus A}$ and
there exists a constant $\kappa\ge 0$ such that
$\lvert v(x)-v(y)\rvert \le \kappa\, d(x,y)^\beta$
for all $x,y\in X$.
 Then
\[
g_v=\kappa\, \mathbf{1}_{A} + g_u\mathbf{1}_{X\setminus A} \in \mathcal{D}_{H}^{\beta}(v)
\]
whenever $g_u\in \mathcal{D}_H^{\beta}(u)$.
\end{lemma}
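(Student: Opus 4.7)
The plan is to verify the defining Hajl\'asz gradient inequality
\[
 |v(x)-v(y)| \le d(x,y)^\beta\bigl(g_v(x)+g_v(y)\bigr)
\]
for the candidate $g_v = \kappa\mathbf{1}_A + g_u\mathbf{1}_{X\setminus A}$ by a direct case split, keeping track of which of $A$ or $X\setminus A$ each of the two points lies in. The natural exceptional set is the measure-zero set $N=N(g_u)$ on which \eqref{e.hajlasz} may fail for $g_u$; since $v|_{X\setminus A}=u|_{X\setminus A}$, no additional exceptional set has to be introduced for $v$.

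First, if both $x,y\in A\setminus N$, then $g_v(x)=g_v(y)=\kappa$, and the assumed global $\beta$-H\"older bound on $v$ gives $|v(x)-v(y)|\le \kappa\, d(x,y)^\beta\le d(x,y)^\beta(g_v(x)+g_v(y))$. Second, if $x\in A\setminus N$ and $y\in (X\setminus A)\setminus N$, then $g_v(x)=\kappa$ and $g_v(y)=g_u(y)\ge 0$, so $g_v(x)+g_v(y)\ge \kappa$ and the global H\"older bound on $v$ again closes the inequality; the symmetric case is identical. Third, if both $x,y\in (X\setminus A)\setminus N$, then $v(x)=u(x)$, $v(y)=u(y)$, $g_v(x)=g_u(x)$, and $g_v(y)=g_u(y)$, so the Hajl\'asz inequality for $u$ transfers verbatim.

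Measurability of $g_v$ is immediate from the Borel measurability of $\mathbf{1}_A$ and of $g_u$, and the three cases above cover all pairs $x,y\in X\setminus N$. Thus $g_v\in\mathcal{D}_H^\beta(v)$.

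No step is genuinely difficult; the content of the lemma lies not in the proof but in the \emph{form} of the glued gradient. Because Hajl\'asz gradients fail to localize, one cannot discard $g_u$ on $A$ and replace it with $\kappa$ there while simply keeping $g_u$ on $X\setminus A$ via a truncation $g_u\mathbf{1}_{X\setminus A}$; one is forced to keep the full $g_u$ on $X\setminus A$ and add a constant $\kappa$ on $A$ to absorb the cross-boundary pairs via the global H\"older bound on $v$. The only thing that could go wrong would be to forget that the global Lipschitz-type estimate on $v$, rather than any pointwise localization of $g_u$, is what handles the mixed case and the interior-of-$A$ case.
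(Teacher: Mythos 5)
Your proof is correct and follows essentially the same argument as the paper: verify the Haj{\l}asz inequality for $g_v$ on $X\setminus N(g_u)$ by a case split, using the global $\beta$-H\"older bound on $v$ whenever at least one point lies in $A$ and the Haj{\l}asz inequality for $u$ when both points lie in $X\setminus A$. The paper merely merges your first two cases into the single case ``$x\in A$ or $y\in A$''.
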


\begin{proof}
Fix a function $g_u\in \mathcal{D}_H^{\beta}(u)$ and let 
$N\subset X$ be the exceptional set such that $\mu(N)=0$ and 
inequality \eqref{e.hajlasz} holds for every $x,y\in X\setminus N$ and with $g=g_u$.

Fix $x,y\in X\setminus N$. If $x,y\in X\setminus A$, then
\[
\lvert v (x)-v(y)\rvert =\lvert u (x)-u(y)\rvert 
\le d(x,y)^\beta \big(g_u(x)+g_u(y)\big) = d(x,y)^\beta \big(g_v (x)+g_v (y)\big)\,.
\]
If $x\in A$ or $y\in A$, then
\[
\lvert v (x)-v(y)\rvert \le \kappa\, d(x,y)^\beta\le d(x,y)^\beta\big(g_v (x)+g_v (y)\big)\,.
\]
By combining the estimates above, we find that
\[
\lvert v (x)-v(y)\rvert \le d(x,y)^\beta\big(g_v (x)+g_v (y)\big)
\]
whenever $x,y\in X\setminus N$. The desired conclusion $g_v\in\mathcal{D}_H^{\beta}(v)$ follows.
\end{proof}

The following nonlocal generalization of the Leibniz rule is from \cite{MR1681586}.
The proof is recalled for the  convenience of the reader. The nonlocality is reflected 
by the appearence of the two global terms $\lVert \psi\lVert_\infty$ and $\kappa$ in the statement below.

\begin{lemma}\label{l.Leibniz}
Let $0<\beta\le 1$, let
 $u\colon X\to \R$ be a bounded $\beta$-H\"older function, and let $\psi\colon X\to \R$ be a bounded $\beta$-H\"older
function with a constant $\kappa\ge 0$.  Then $u\psi\colon X\to \R$ is a $\beta$-H\"older function  and 
\[
(g_u\lVert \psi\lVert_\infty + \kappa \lvert  u\rvert)\mathbf{1}_{\{\psi\not=0\}}\in\mathcal{D}_H^{\beta}(u\psi)
\]
for all $g_u\in\mathcal{D}_H^{\beta}(u)$.
Here $\{\psi\not=0\}=\{y\in X: \psi(y)\not=0\}$.
\end{lemma}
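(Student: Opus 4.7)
The plan is to first verify the $\beta$-H\"older property of $u\psi$ directly, and then to check that the function
\[
g_v:=(g_u\lVert\psi\rVert_\infty+\kappa\lvert u\rvert)\mathbf{1}_{\{\psi\ne 0\}}
\]
is a Haj{\l}asz $\beta$-gradient of $u\psi$ by a case analysis on the vanishing of $\psi$ at the test points. For the H\"older property I would use the asymmetric product identity $u(x)\psi(x)-u(y)\psi(y)=\psi(x)(u(x)-u(y))+u(y)(\psi(x)-\psi(y))$, which combined with the boundedness of $u$ and $\psi$ and their $\beta$-H\"older bounds immediately yields a $\beta$-H\"older estimate for $u\psi$ with constant at most $\lVert\psi\rVert_\infty C_u+\kappa\lVert u\rVert_\infty$, where $C_u$ is the H\"older constant of $u$.

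For the gradient assertion I would fix $g_u\in\mathcal{D}_H^\beta(u)$ with exceptional set $N$ of measure zero, pick $x,y\in X\setminus N$, and split into three cases. If $\psi(x)=\psi(y)=0$ then $u(x)\psi(x)-u(y)\psi(y)=0$ and there is nothing to verify. If $\psi(x)\ne 0$ and $\psi(y)\ne 0$, then
\[
g_v(x)+g_v(y)=\lVert\psi\rVert_\infty(g_u(x)+g_u(y))+\kappa(\lvert u(x)\rvert+\lvert u(y)\rvert),
\]
and applying the same product identity together with the Haj{\l}asz bound for $u$ with $g_u$ and the $\beta$-H\"older bound for $\psi$ with constant $\kappa$ gives
\[
\lvert u(x)\psi(x)-u(y)\psi(y)\rvert\le\lVert\psi\rVert_\infty d(x,y)^\beta(g_u(x)+g_u(y))+\kappa\lvert u(y)\rvert d(x,y)^\beta\le d(x,y)^\beta(g_v(x)+g_v(y)),
\]
using the trivial inequality $\lvert u(y)\rvert\le\lvert u(x)\rvert+\lvert u(y)\rvert$ in the last step.

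The real subtlety appears in the remaining case, where exactly one of $\psi(x),\psi(y)$ vanishes; by symmetry I may assume $\psi(y)=0$ and $\psi(x)\ne 0$. The indicator factor in the definition of $g_v$ then forces $g_v(y)=0$, so the entire pointwise control must come from $g_v(x)$ alone, and neither the global sup-bound on $\psi$ nor the Haj{\l}asz estimate for $u$ is of any direct use at $y$. The decisive observation is that the hypothesis $\psi(y)=0$ converts the H\"older estimate for $\psi$ into the pointwise inequality $\lvert\psi(x)\rvert=\lvert\psi(x)-\psi(y)\rvert\le\kappa d(x,y)^\beta$, whence
\[
\lvert u(x)\psi(x)-u(y)\psi(y)\rvert=\lvert u(x)\rvert\lvert\psi(x)\rvert\le\kappa\lvert u(x)\rvert d(x,y)^\beta\le d(x,y)^\beta g_v(x).
\]
This is precisely why the pointwise quantity $\kappa\lvert u\rvert$, rather than only the global term $\kappa\lVert u\rVert_\infty$, must appear in $g_v$. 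Combining the three cases yields $g_v\in\mathcal{D}_H^\beta(u\psi)$.
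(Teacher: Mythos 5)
Your proof is correct and follows essentially the same route as the paper: the asymmetric product identity $u(x)\psi(x)-u(y)\psi(y)=\psi(x)(u(x)-u(y))+u(y)(\psi(x)-\psi(y))$ followed by a case analysis on where $\psi$ vanishes, with the key point in the mixed case being that the H\"older bound for $\psi$ supplies $\lvert\psi(x)\rvert\le\kappa\,d(x,y)^\beta$ so that the pointwise term $\kappa\lvert u\rvert$ at the non-vanishing point absorbs everything. The paper's own case study is organized identically, so there is nothing to add.
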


\begin{proof}
Fix $x,y\in X$. Then
\begin{equation}\label{e.basic_triangle}
\begin{split}
\lvert u(x)\psi(x) - u(y)\psi(y)\rvert&=\lvert u(x)\psi(x) - u(y)\psi(x) + u(y)\psi(x) - u(y)\psi(y)\rvert\\
&\le \lvert \psi(x)\rvert\lvert u(x) - u(y)\rvert + \lvert u(y)\rvert\lvert \psi(x) - \psi(y)\rvert\,.
\end{split}
\end{equation}
Since $u$ and $\psi$ are both bounded $\beta$-H\"older functions in $X$, it follows
that $u\psi$ is $\beta$-H\"older in $X$.

Fix a function $g_u\in \mathcal{D}_H^{\beta}(u)$ and let 
$N\subset X$ be the exceptional set such that $\mu(N)=0$ and 
inequality \eqref{e.hajlasz} holds for every $x,y\in X\setminus N$ and with $g=g_u$.
Denote $h=(g_u\lVert \psi\lVert_\infty + \kappa \lvert  u\rvert)\mathbf{1}_{\{\psi\not=0\}}$. Let $x,y\in X\setminus N$. It suffices to show that \[
\lvert u(x)\psi(x) - u(y)\psi(y)\rvert\le d(x,y)^\beta(h(x)+h(y))\,.\]
By \eqref{e.basic_triangle}, we get
\begin{equation}\label{e.toka_apu}
\begin{split}
\lvert u(x)\psi(x) - u(y)\psi(y)\rvert&\le \lvert \psi(x)\rvert d(x,y)^\beta (g_u(x)+g_u(y)) + \lvert u(y)\rvert \kappa d(x,y)^\beta\\
&= d(x,y)^\beta \left(\lvert \psi(x)\rvert(g_u(x)+g_u(y))+\kappa\lvert u(y)\rvert \right)\,.
\end{split}
\end{equation}
Next we do a case study. If  $x,y\in \{\psi\not=0\}$, then by \eqref{e.toka_apu} we have
\begin{align*}
\lvert u(x)\psi(x) - u(y)\psi(y)\rvert &\le d(x,y)^\beta \left(g_u(x) \lVert \psi\rVert_\infty \mathbf{1}_{\{\psi\not=0\}}(x)+  (g_u(y) \lVert \psi\rVert_\infty+\kappa\lvert u(y)\rvert) \mathbf{1}_{\{\psi\not=0\}}(y)\right)\\
&\le  d(x,y)^\beta(h(x)+h(y))\,.
\end{align*}
If $x\in X\setminus  \{\psi\not=0\}$  and $y\in \{\psi\not=0\}$, then
\begin{align*}
\lvert u(x)\psi(x) - u(y)\psi(y)\rvert &\le d(x,y)^\beta \left(\kappa\lvert u(y)\rvert \mathbf{1}_{\{\psi\not=0\}}(y)\right)\\
&=d(x,y)^\beta h(y)\le  d(x,y)^\beta(h(x)+h(y))\,.
\end{align*}
The case $x\in \{\psi\not=0\}$  and $y\in X\setminus \{\psi\not=0\}$ is symmetric
and the last case is trivial.
\end{proof}

A significant  advantage of working with Haj{\l}asz gradients is that Poincar\'e inequalities  are always valid \cite{MR1859234,MR2025934}. The same is not true for the usual $p$-weak upper
gradients, in which case a Poincar\'e inequality often has to be assumed.

The following theorem gives a $(\beta,p,p)$-Poincar\'e inequality for any $1\le p<\infty$.
This inequality relates the Haj{\l}asz gradient to the given measure. 

\begin{theorem}\label{t.pp_poincare}
Suppose that $X$ is a metric space.
Fix  exponents
$1\le p<\infty$ and $0<\beta \le 1$.
Suppose that 
$u\in\mathrm{Lip}_\beta(X)$ and that $g\in \mathcal{D}_H^{\beta}(u)$.
Then
\[
\bigg(\vint_B \lvert u(x)-u_B\rvert^p\,d\mu(x)\bigg)^{1/p}\le 2 \diam(B)^\beta\bigg(\vint_B g(x)^p\,d\mu(x)\bigg)^{1/p}
\]
holds whenever $B\subset X$ is a ball.
\end{theorem}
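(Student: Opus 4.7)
The plan is to follow the classical Haj{\l}asz-style argument: average the pointwise gradient inequality once, then bound the oscillation in $L^p(B)$ by Minkowski's inequality.

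First, I would fix an exceptional set $N\subset X$ with $\mu(N)=0$ such that the defining inequality \eqref{e.hajlasz} holds for all $x,y\in X\setminus N$. Using the identity
\[
u(x)-u_B=\vint_B (u(x)-u(y))\,d\mu(y),
\]
valid for every $x\in X$, I would apply the triangle inequality inside the integral and then, for $x\in B\setminus N$, invoke the Haj{\l}asz inequality together with the bound $d(x,y)\le\diam(B)$ for $x,y\in B$. Since $\mu(N)=0$, the inner integration over $y\in B$ can be restricted to $B\setminus N$ without penalty, which yields the pointwise estimate
\[
\lvert u(x)-u_B\rvert\le \diam(B)^\beta\bigl(g(x)+g_B\bigr)\qquad\text{for $\mu$-a.e.\ }x\in B.
\]

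Next I would take the $L^p(B,d\mu/\mu(B))$-norm of both sides. By Minkowski's inequality applied to the right-hand side, together with Jensen's inequality to bound the constant term $g_B\le (\vint_B g^p\,d\mu)^{1/p}$, the two resulting terms combine into the claimed factor $2$:
\[
\Bigl(\vint_B\lvert u(x)-u_B\rvert^p\,d\mu(x)\Bigr)^{1/p}\le \diam(B)^\beta\Bigl[\Bigl(\vint_B g^p\,d\mu\Bigr)^{1/p}+g_B\Bigr]\le 2\diam(B)^\beta\Bigl(\vint_B g^p\,d\mu\Bigr)^{1/p}.
\]

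There is no real obstacle here; the only point requiring a sentence of care is the use of Fubini to say that the exceptional set $N$ may be ignored inside a double average (so that the pointwise Haj{\l}asz bound is applicable almost everywhere), and the fact that the $\beta$-H\"older assumption on $u$ ensures $u\in L^\infty_{\mathrm{loc}}(X)$ so that $u_B$ is well-defined. No doubling, geodesic, or completeness hypothesis on $X$ is needed — Theorem \ref{t.pp_poincare} is genuinely stated for any metric space, which is consistent with the emphasis in the surrounding text that Haj{\l}asz gradients automatically satisfy Poincar\'e inequalities for every measure.
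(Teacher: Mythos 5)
Your proof is correct and follows essentially the same route as the paper: both arguments average the pointwise Haj{\l}asz inequality over the ball and land on the constant $2$. The only (immaterial) difference is the order of operations — you first derive the pointwise bound $\lvert u(x)-u_B\rvert\le \diam(B)^\beta(g(x)+g_B)$ and then apply Minkowski and Jensen, whereas the paper applies Jensen first to pass to a double integral of $\lvert u(x)-u(y)\rvert^p$ and then uses $(a+b)^p\le 2^{p-1}(a^p+b^p)$.
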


\begin{proof}
We follow the  proof of \cite[Theorem 5.15]{MR1800917}.
Let 
$N=N(g)\subset X$ be the exceptional set such that $\mu(N)=0$ and 
 \eqref{e.hajlasz} holds for every $x,y\in X\setminus N$.
By H\"older's inequality
\[
\vint_B \lvert u(x)-u_B\rvert^p\,d\mu(x)\le \vint_{B\setminus N} \vint_{B\setminus N} \lvert u(y)-u(x)\rvert^p\,d\mu(y)\,d\mu(x)\,.
\]
Applying \eqref{e.hajlasz}, we obtain
\begin{align*}
&\vint_{B\setminus N} \vint_{B\setminus N} \lvert u(y)-u(x)\rvert^p\,d\mu(y)\,d\mu(x)
\le \vint_{B\setminus N} \vint_{B\setminus N} d(x,y)^{\beta p} \left( g(x) + g(y) \right)^p\,d\mu(y)\,d\mu(x)\\
&\quad \le 2^{p-1}\diam(B)^{\beta p} \vint_{B\setminus N} \vint_{B\setminus N} \left(g(x)^p + g(y)^p\right)\,d\mu(y)\,d\mu(x)
\le 2^p\diam(B)^{\beta p}\vint_B g(x)^p\,d\mu(x)\,.
\end{align*}
The claimed inequality follows by combining the above estimates.
\end{proof}

In a geodesic space,  even a stronger $(\beta,p,q)$-Poincar\'e inequality holds for some $q<p$. In the context of $p$-weak upper gradients, this
result corresponds to the deep theorem of Keith and Zhong \cite{MR2415381}.
In our context the proof is simpler, since we have $(\beta,q,q)$-Poincar\'e inequalities for all exponents $1<q<p$
 by Theorem \ref{t.pp_poincare}. It remains to argue that one of these inequalities self-improves to  a $(\beta,p,q)$-Poincar\'e inequality when $q<p$ is sufficiently close to $p$.

\begin{theorem}\label{t.qp_poincare}
Suppose that $X$ is a geodesic space.
Fix  exponents
$1< p<\infty$ and $0<\beta \le 1$.
Suppose that 
$u\in\mathrm{Lip}_\beta(X)$ and that $g\in \mathcal{D}_H^{\beta}(u)$.
Then there exists an exponent $1<q<p$ and a constant $C$, both depending on $c_\mu$, $p$ and $\beta$, such that
\[
\bigg(\vint_B \lvert u(x)-u_B\rvert^p\,d\mu(x)\bigg)^{1/p}\le C \diam(B)^\beta\bigg(\vint_B g(x)^q\,d\mu(x)\bigg)^{1/q}
\]
holds whenever $B\subset X$ is a ball.
\end{theorem}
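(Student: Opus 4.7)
The plan is to follow the Keith--Zhong self-improvement scheme \cite{MR2415381}, adapted to the Haj{\l}asz setting in the spirit of \cite{MR3895752}. The key simplification over the $p$-weak upper gradient setting is that Theorem \ref{t.pp_poincare} already supplies a $(\beta, r, r)$-Poincar\'e inequality for every $1 < r \le p$, with no extra geometric assumption on $X$. The task therefore reduces to improving the integrability exponent on the left-hand side from $L^q$ to $L^p$ for some $q < p$ sufficiently close to $p$.

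Fix a ball $B$ and an exponent $1 < q < p$ to be chosen at the end. The argument proceeds by truncation at level sets of the restricted maximal function of $g^q$. For each $\lambda > 0$, the pointwise Haj{\l}asz inequality \eqref{e.hajlasz} shows that $u$ restricted to the good set $\{g \le \lambda\}$ is $\beta$-H\"older with constant $2\lambda$. Extend this restriction via the McShane formula \eqref{McShane} to a $\beta$-H\"older function $v_\lambda : X \to \R$ with the same constant, coinciding with $u$ on $\{g \le \lambda\}$. Applying Lemma \ref{l.glueing} with $A = \{g > \lambda\}$ and $\kappa = 2\lambda$ then produces the Haj{\l}asz $\beta$-gradient
\[
g_{v_\lambda} = 2\lambda\,\mathbf{1}_{\{g > \lambda\}} + g\,\mathbf{1}_{\{g \le \lambda\}} \in \mathcal{D}_H^\beta(v_\lambda),
\]
and since $u - v_\lambda$ vanishes off $\{g > \lambda\}$, one may decompose $u = v_\lambda + (u - v_\lambda)$ and estimate the two contributions separately.

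The $v_\lambda$ part is controlled in $L^\infty$ directly by the H\"older estimate, giving a bound of order $\lambda\diam(B)^\beta$. For the remainder $u - v_\lambda$ one applies Theorem \ref{t.pp_poincare} at the exponent $q$ with Haj{\l}asz gradient $g + g_{v_\lambda}$, and then sums the resulting estimates over a dyadic sequence of levels $\lambda_k = 2^k \lambda_0$, where $\lambda_0$ is a suitable multiple of $\bigl(\vint_B g^q\,d\mu\bigr)^{1/q}$. The resulting good-$\lambda$ inequality, combined with the strong $(p/q, p/q)$-boundedness of the Hardy--Littlewood maximal operator, available exactly because $p/q > 1$, yields the desired $(\beta, p, q)$-Poincar\'e inequality once $q$ is chosen close enough to $p$ in terms of $c_\mu$, $p$ and $\beta$.

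The principal obstacle is the nonlocality of Haj{\l}asz gradients: the gradient $g_{v_\lambda}$ supplied by Lemma \ref{l.glueing} is not pointwise dominated by $g$ but carries the global term $2\lambda\,\mathbf{1}_{\{g > \lambda\}}$. Absorbing this contribution requires the weak-type bound $\mu(\{g > \lambda\}\cap B) \le \lambda^{-q}\int_B g^q\,d\mu$, and convergence of the dyadic series in $\lambda$ works only because $p/q > 1$. The chaining of balls needed in the Keith--Zhong iteration uses the reverse doubling \eqref{e.rev_dbl_decay}, which is where the geodesic hypothesis on $X$ enters the argument in an essential way.
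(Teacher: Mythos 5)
Your proposal correctly identifies the essential reduction (the $(\beta,q,q)$-Poincar\'e inequality is free for every $q$ by Theorem \ref{t.pp_poincare}, so only the \emph{left-hand} exponent needs to be raised), but the truncation scheme you then sketch does not close, and it is not what the paper does. The paper's proof is a two-line deduction: Theorem \ref{t.pp_poincare} plus H\"older give a $(\beta,1,q)$-Poincar\'e inequality for every $q$, and the standard Haj{\l}asz--Koskela chaining argument (\cite[Theorem 3.6]{MR3895752}, based on \cite{MR1336257}) upgrades the left-hand exponent to the Sobolev exponent $Qq/(Q-\beta q)$ for any $Q>\max\{\log_2 c_\mu,\beta p\}$; choosing $q<p$ close enough to $p$ makes this exponent exceed $p$. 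Your argument has two concrete gaps. First, the remainder $u-v_\lambda$ does not carry a Haj{\l}asz gradient supported on the bad set: by Lemma \ref{l.glueing} and (D1)--(D2) its gradient is $(g+2\lambda)\mathbf{1}_{\{g>\lambda\}}+2g\,\mathbf{1}_{\{g\le\lambda\}}$, whose $L^q(B)$-norm is comparable to $\lVert g\rVert_{L^q(B)}$ \emph{uniformly in $\lambda$} because of the term $2g\,\mathbf{1}_{\{g\le\lambda\}}$. Hence the $(\beta,q,q)$-Poincar\'e bounds for $u-v_{\lambda_k}$ do not decay in $k$ and the dyadic sum diverges; this is precisely the nonlocality obstruction the paper emphasizes, and the weak-type bound $\mu(\{g>\lambda\})\le\lambda^{-q}\int_B g^q\,d\mu$ addresses the wrong term. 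Second, the appeal to the strong $(p/q,p/q)$-boundedness of the Hardy--Littlewood maximal operator cannot deliver the claimed inequality: applied to $g^q$ it yields $\int_B M(g^q\mathbf{1}_B)^{p/q}\,d\mu\le C\int_B g^p\,d\mu$, i.e.\ a right-hand side in $L^p$, which merely recovers the $(\beta,p,p)$ inequality you started from rather than the $(\beta,p,q)$ inequality you want.

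More structurally, any correct proof must use the doubling dimension $s=\log_2 c_\mu$ quantitatively, since the admissible range of $q$ is governed by the requirement $Qq/(Q-\beta q)>p$ with $Q>s$; this parameter never enters your argument, which is a sign that the improving mechanism is missing. The Keith--Zhong truncation at level sets of $g$ is designed to \emph{lower the right-hand} exponent of an assumed Poincar\'e inequality --- exactly the step that is free in the Haj{\l}asz setting --- whereas raising the left-hand exponent requires the chaining/covering argument (or a Maz{\cprime}ya-type truncation of $u$ itself), in which the geometric decay of the radii $r_i^\beta$ in the telescoping sum is traded against the measure lower bound \eqref{e.radius_measure}. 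Replacing your middle steps by a direct citation of \cite[Theorem 3.6]{MR3895752}, as the paper does, is the cleanest repair.
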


\begin{proof}
Fix $Q=Q(\beta,p,c_\mu)$ such that $Q>\max\{\log_2 c_\mu,\beta p\}$. Since
\[\lim_{q\to p} Qq/(Q-\beta q)=Qp/(Q-\beta p)>p\,,\] there exists
$1<q=q(\beta,p,c_\mu)<p$ such that $p<Qq/(Q-\beta q)$ and $\beta q<Q$. Theorem \ref{t.pp_poincare} and H\"older's inequality implies that
\[
\vint_B \lvert u(x)-u_B\rvert\,d\mu(x)\le \left(\vint_B \lvert u(x)-u_B\rvert^q\,d\mu(x)\right)^{1/q}\le 2 \diam(B)^\beta\bigg(\vint_B g(x)^q\,d\mu(x)\bigg)^{1/q}
\]
whenever $B\subset X$ is a ball. Now the claim follows
from \cite[Theorem 3.6]{MR3895752}, which is based on the covering argument from \cite{MR1336257}.
We also refer to \cite[Lemma 2.2]{MR3089750}.
\end{proof}

\section{Capacity density condition}\label{s.capacity}

In this section we define the capacity density condition. This condition is based
on the following notion of variational capacity, and it is weaker than the well known measure density condition. We also prove 
boundary Poincar\'e inequalities for sets satisfying a capacity density condition. This is done with the aid of so-called Maz{\cprime}ya's inequality, which provides
an important link between Poincar\'e inequalities and capacities.

\begin{definition}\label{d.varcap}
Let $1\le p<\infty$, $0<\beta\le 1$, and let $\Omega\subset X$ be an open set. 
The variational $(\beta,p)$-capacity of a subset $F\subset\Omega$ with  $\mathrm{dist}(F,X\setminus \Omega)>0$ is \begin{equation*}\label{e.cap}
\cp_{\beta,p}(F,\Omega)=\inf_u\inf_g\int_X g(x)^p\,d\mu(x)\,,
\end{equation*}
where the infimums are taken  over all $\beta$-H\"older functions $u$ in $X$, with $u\ge 1$ in $F$ and $u=0$ in $X\setminus \Omega$, and over all $g\in\mathcal{D}_H^{\beta}(u)$.
\end{definition}

\begin{remark}\label{r.varcap_properties} 
We may take the infimum in Definition~\ref{d.varcap} among all $u$ satisfying additionally $0\le u\le 1$. This follows by considering the $\beta$-H\"older function function $v=\max\{0,\min\{u,1\}\}$ since $g\in\mathcal{D}^\beta_H(v)$ by Property~(D3).
\end{remark}

\begin{definition}\label{d.cap_density}
A closed set $E\subset X$ satisfies a $(\beta,p)$-capacity density condition, 
for $1\le p<\infty$ and $0<\beta\le 1$, if there
exists a constant $c_0>0$ such that
\begin{equation}\label{e.fatness}
\cp_{\beta,p}(E\cap \overline{B(x,r)},B(x,2r))\ge c_0 r^{-\beta p}\mu(B(x,r))
\end{equation}
for all $x\in E$ and all $0<r<(1/8)\diam(E)$.
\end{definition}

\begin{example}\label{e.meas_density}
We say that a closed set $E\subset X$ satisfies a measure density condition, if there exists
a constant $c_1$ such that
\begin{equation}\label{e.meas_density_def}
\mu(E\cap \overline{B(x,r)})\ge c_1 \mu(B(x,r))
\end{equation}
for all $x\in E$ and all $0<r<(1/8)\diam(E)$.
Assume that the metric space $X$ is connected, $1\le p<\infty$ and $0<\beta\le 1$, and that  a set $E\subset X$ satisfies
a measure density condition. Then it is easy to show that $E$ satisfies a $(\beta,p)$-capacity density condition, see below. We remark that the  measure density condition has been applied in 
\cite{KilpelainenKinnunenMartio2000} to study Haj{\l}asz Sobolev spaces
with zero boundary values on $E$.

Fix $x\in E$ and $0<r<(1/8)\diam(E)$. We aim to show that \eqref{e.fatness} holds. For this purpose, we write $F=E\cap \overline{B(x,r)}$ and $B=B(x,r)$.
Let $u\in \Lip_\beta(X)$ be such that  $0\le u\le 1$, $u= 1$ in $F$ and $u=0$ in $X\setminus 2B$.
Let also $g\in\mathcal{D}_H^{\beta}(u)$.
Recall that $X$ is connected. Hence, by  the properties of $u$ and the reverse doubling inequality~\eqref{e.rev_dbl_decay},  we obtain
\begin{align*}
0\le u_{4B}=\vint_{4B} u(y)\,d\mu(y) \le \frac{\mu(2B)}{\mu(4 B)}
\le c_R < 1.
\end{align*}
If $y\in F$, we have $v(y)=1$ and therefore
\[
\lvert u(y)- v_{4B}\rvert  \ge 1-u_{4B}\ge 1-c_R=C(c_\mu)>0.
\]
Applying the measure density condition \eqref{e.meas_density_def} and the  $(\beta,p,p)$-Poincar\'e inequality, see Theorem~\ref{t.pp_poincare}, we obtain
\begin{align*}
c_1\mu(B)&\le \mu(F)\le C(c_\mu,p)\int_{F} \lvert u(y)- u_{4B}\rvert^p\,d\mu(y)\\
&\le C(c_\mu,p)\int_{4B} \lvert u(y)- u_{4B}\rvert^p\,d\mu(y)\\&\le C(c_\mu,p)r^{\beta p} \int_{4B} g(y)^p\,d\mu(y)
\le C(c_\mu,p)r^{\beta p} \int_{X} g(y)^p\,d\mu(y)\,.
\end{align*}
By taking infimum over functions $u$ and $g$ as above, we see that
\[
\mathrm{cap}_{\beta,p}(E\cap \overline{B(x,r)},2B)=\mathrm{cap}_{\beta,p}(F,2B)\ge C(c_1,c_\mu,p) r^{-\beta p}\mu(B)\,.
\]
This shows that $E$ satisfies a $(\beta,p)$-capacity density condition \eqref{e.fatness}.
\end{example}

The following Maz{\cprime}ya's inequality
provides a link between capacities and Poincar\'e inequalities.
We refer to \cite[Chapter~10]{Mazya1985} and \cite[Chapter~14]{Mazya2011}
for further details on such inequalities.

\begin{theorem}\label{t.Mazya_Poincare}
Let $1\le p<\infty$, $0<\beta\le 1$, and let $B(z,r)\subset X$ be a ball.
Assume that $u$ is a $\beta$-H\"older function in $X$ and $g\in\mathcal{D}_H^{\beta}(u)$.
Then there exists a constant $C=C(p)$ such that
\[
\vint_{B(z,r)} \lvert u(x)\rvert^{p}\,d\mu(x)
\le \frac{C}{\cp_{\beta,p}\bigl(\{u=0\}\cap\overline{B(z,\frac r 2)},B(z,r)\bigr)}\int_{B(z,r)} g(x)^p\,d\mu(x)\,.
\]
Here $\{u=0\}=\{y\in X : u(y)=0\}$. 
\end{theorem}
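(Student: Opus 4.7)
The plan is to exploit the $(\beta,p,p)$-Poincar\'e inequality of Theorem \ref{t.pp_poincare} to reduce the claim to a capacitary bound on the mean value $u_B$, and then to produce that bound by constructing an explicit admissible test function for the variational capacity. Write $B = B(z,r)$, $E = \{u=0\}\cap\overline{B(z,r/2)}$, and $\mathcal{C} = \cp_{\beta,p}(E,B)$. The cutoff
\[
\psi(x) := \max\Bigl\{0,\,1 - (r/2)^{-\beta}\,d(x,\overline{B(z,r/2)})^\beta\Bigr\}
\]
lies in $\Lip_\beta(X)$ with constant $\kappa\le 2r^{-\beta}$, equals $1$ on $\overline{B(z,r/2)}\supset E$, and vanishes outside $B$; Lemma \ref{l.Leibniz} applied to the product $1\cdot\psi$ shows $\kappa\mathbf{1}_{\{\psi\ne 0\}}\in\mathcal{D}^\beta_H(\psi)$, which yields the trivial bound $\mathcal{C}\le Cr^{-\beta p}\mu(B)$. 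Combining this with Theorem \ref{t.pp_poincare} and $|u|^p\le 2^{p-1}(|u-u_B|^p+|u_B|^p)$ gives
\[
\mathcal{C}\,\vint_B |u|^p\,d\mu \;\le\; C\,r^{\beta p}\mathcal{C}\vint_B g^p\,d\mu + C\,\mathcal{C}\,|u_B|^p \;\le\; C\int_B g^p\,d\mu + C\,\mathcal{C}\,|u_B|^p,
\]
reducing everything to
\[
\mathcal{C}\,|u_B|^p \;\le\; C\int_B g^p\,d\mu. \tag{$\star$}
\]

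To prove $(\star)$ we may assume $u_B\ne 0$ (the opposite case is trivial). Introduce the Lipschitz function $\phi(t) := \max\{0,\,1 - 2t/|u_B|\}$ on $\R$, which has Lipschitz constant $2/|u_B|$, and set
\[
v := \psi\cdot(\phi\circ|u|).
\]
Then $v\in\Lip_\beta(X)$ with $0\le v\le 1$, $v = 1$ on $E$ (since $\phi(0)=1$ and $\psi = 1$ on $\overline{B(z,r/2)}$), and $v = 0$ outside $B$, so by Remark \ref{r.varcap_properties} $v$ is admissible for $\mathcal{C}$. Two applications of property (D3), first to $t\mapsto|t|$ and then to $\phi$, yield $2g/|u_B|\in\mathcal{D}^\beta_H(\phi\circ|u|)$. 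The nonlocal Leibniz rule (Lemma \ref{l.Leibniz}) applied to $(\phi\circ|u|)\cdot\psi$ then produces
\[
g_v := \Bigl(\tfrac{2}{|u_B|}\,g\,\|\psi\|_\infty + \kappa\,(\phi\circ|u|)\Bigr)\mathbf{1}_{\{\psi\ne 0\}} \in \mathcal{D}^\beta_H(v).
\]

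The decisive observation is that the second summand in $g_v$ is supported in the set $\{|u| < |u_B|/2\}$, because $\phi\circ|u|$ vanishes elsewhere. On this set the reverse triangle inequality gives $|u(x) - u_B| > |u_B|/2$, so Chebyshev's inequality combined with Theorem \ref{t.pp_poincare} produces
\[
\mu\bigl(B\cap\{|u|<|u_B|/2\}\bigr) \;\le\; \frac{2^p}{|u_B|^p}\int_B |u-u_B|^p\,d\mu \;\le\; \frac{C\,r^{\beta p}}{|u_B|^p}\int_B g^p\,d\mu.
\]
Therefore
\[
\mathcal{C}\;\le\;\int_X g_v^p\,d\mu \;\le\; \frac{C}{|u_B|^p}\int_B g^p\,d\mu + C\,r^{-\beta p}\cdot\frac{C\,r^{\beta p}}{|u_B|^p}\int_B g^p\,d\mu \;=\; \frac{C}{|u_B|^p}\int_B g^p\,d\mu,
\]
which is precisely $(\star)$. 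The only delicate point in the argument is the handling of the nonlocal term $\kappa(\phi\circ|u|)\mathbf{1}_{\{\psi\ne 0\}}$ in $g_v$: unlike the classical product rule, it is not pointwise annihilated by $\phi\circ|u|$, so its $L^p$-contribution has to be controlled through the \emph{measure} of the set where $u$ is small, and this is exactly where the Poincar\'e inequality enters via Chebyshev.
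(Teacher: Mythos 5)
Your proof is correct. It follows the same overall Maz{\cprime}ya strategy as the paper -- build an admissible test function of the form (cutoff $\psi$) $\times$ (profile of $u$), apply the nonlocal Leibniz rule of Lemma \ref{l.Leibniz}, and control the resulting Haj{\l}asz gradient through the $(\beta,p,p)$-Poincar\'e inequality -- but the execution differs in two genuine ways. First, the paper normalizes by the $L^p$-average $u_{B,p}=(\vint_B|u|^p)^{1/p}$ and uses the affine profile $1-u/u_{B,p}$ (after first reducing to $u$ bounded and nonnegative via (D3)), so that the quantity to be bounded sits on the left throughout and no a priori upper bound on the capacity is needed; you instead keep the $L^1$-average $u_B$, which forces the preliminary reduction to $(\star)$ and requires the trivial estimate $\cp_{\beta,p}(E,B)\le Cr^{-\beta p}\mu(B)$ to absorb the Poincar\'e term -- a step you justify correctly. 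Second, and more substantively, your profile is the truncation $\phi(|u|)$ at level $|u_B|/2$, so the nonlocal term $\kappa(\phi\circ|u|)\mathbf{1}_{\{\psi\ne0\}}$ is supported on the level set $\{|u|<|u_B|/2\}$ and is controlled by a Chebyshev estimate on the measure of that set, whereas the paper's nonlocal term is $\kappa|1-u/u_{B,p}|\mathbf{1}_{\{\psi\ne0\}}$ and is bounded by estimating $\|u-u_{B,p}\|_{L^p(B)}$ directly. Your weak-type route is the more classical ``capacity of a level set'' form of Maz{\cprime}ya's inequality and avoids the paper's Minkowski-inequality manipulations with $u_{B,p}$; the paper's route avoids your initial absorption step. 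Both yield $C=C(p)$, and both handle the nonlocality of $\mathcal{D}^\beta_H$ in the same essential way, through Lemma \ref{l.Leibniz}.
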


\begin{proof} 
We adapt the proof of \cite[Theorem 5.47]{KLV2021}, which in turn is based on \cite[Theorem 5.53]{MR2867756}. 
Let $M=\sup\{\lvert u(x)\rvert : x\in B(z,r)\}<\infty$.
By considering $\min\{M,\lvert u\rvert\}$ instead of $u$ and using (D3), we may assume that $u$ is a bounded $\beta$-H\"older function in $X$ and that $u\ge 0$ in $B(z,r)$. 
Write $B=B(z,r)$ and
\[
u_{B,p}=\biggl(\vint_{B} u(x)^p\,d\mu(x)\biggr)^{\frac1p} = \mu(B)^{-\frac1p}\lVert u \rVert_{L^p(B)}<\infty\,.
\]
If $u_{B,p} = 0$ the claim is true, and thus we may assume that $u_{B,p} > 0$.
Let
\[
\psi(x)=\max \Bigl\{0,1-(2r^{-1})^{\beta}d\bigl(x,B(z,\tfrac r2)\bigr)^\beta\Bigr\}\,
\]
for every $x\in X$. 
Then $0\le\psi\le 1$, $\psi=0$ in $X\setminus B(z,r)$, $\psi=1$ in $\overline{B(z,\tfrac r2)}$,
and  $\psi$ is a $\beta$-H\"older function in $X$ with a constant $(2r^{-1})^\beta$.
Let
\[
v(x)=\psi(x)\biggl(1-\frac {u(x)} {u_{B,p}}\biggr)\,,\qquad x\in X\,.
\]
Then 
$v=1$ in $\{u=0\}\cap \overline{B(z,\tfrac r2)}$ and $v=0$ in $X\setminus B(z,r)$.
By Lemma \ref{l.Leibniz},  and properties (D1) and (D2), the function $v$ is $\beta$-H\"older in $X$ and   
\[
g_v=\left(\frac{g}{u_{B,p}}\lVert \psi\rVert_\infty + (2r^{-1})^\beta \bigg\lvert 1-\frac {u} {u_{B,p}}\bigg\rvert\right)\mathbf{1}_{\{\psi\not=0\}}\in\mathcal{D}_H^{\beta}(v)\,.
\]
 Here we used the fact that $g\in\mathcal{D}_H^{\beta}(u)$ by assumptions. Now, the pair $v$ and $g_v$ is admissible for testing the capacity. Thus, we obtain
\begin{equation}\label{e.cap_of_null}
\begin{split}
&\cp_{\beta,p}\bigl(\{u=0\}\cap\overline{B(z,\tfrac r2)},B(z,r)\bigr)
 \le \int_{X} g_v(x)^p\,d\mu(x)\\
&\qquad\le \frac{C(p)}{(u_{B,p})^p} \int_{B} g(x)^p\,d\mu(x) + \frac {C(p)}{r^{\beta p} (u_{B,p})^p} \int_{B}\lvert u(x)-u_{B,p}\rvert^p\,d\mu(x)\,.
\end{split}
\end{equation}
We use Minkowski's inequality and  the $(\beta,p,p)$-Poincar\'e inequality in Theorem \ref{t.pp_poincare}
to estimate the second term on the right-hand side of~\eqref{e.cap_of_null}, and obtain
\begin{equation}\label{e.iol_u_est}
\begin{split}
&\biggl(\vint_{B}\lvert u(x)-u_{B,p}\rvert^p\,d\mu(x)\biggr)^{\frac 1p}
\le \biggl(\vint_{B}\lvert u(x)-u_{B}\rvert^p\,d\mu(x)\biggr)^{\frac1p} + \lvert u_{B,p} - u_{B}\rvert\\
&\qquad \le Cr^\beta\biggl(\vint_{B} g(x)^p\,d\mu(x)\biggr)^{\frac 1p}
  + \mu(B)^{-\frac1p}\bigl\lvert \lVert u \rVert_{L^p({B})} - \lVert u_{B} \rVert_{L^p({B})} \bigr\rvert.
\end{split}
\end{equation}
By the triangle inequality and the above  Poincar\'e inequality,
we have
\begin{equation*}
\begin{split}
\mu(B)^{-\frac1p}\bigl\lvert \lVert u \rVert_{L^p({B})} - \lVert u_{B} \rVert_{L^p({B})} \bigr\rvert
& \le \mu(B)^{-\frac1p}\lVert u - u_{B}\rVert_{L^p({B})}\\
& = \biggl(\vint_{B}\lvert u(x)-u_{B}\rvert^p\,d\mu(x)\biggr)^{\frac 1p}\\
&\le Cr^\beta\biggl(\vint_{B} g(x)^p\,d\mu(x)\biggr)^{\frac 1p}.
\end{split}
\end{equation*} 
Together with~\eqref{e.iol_u_est} this gives
\[
\biggl(\vint_{B}\lvert u(x)-u_{B,p}\rvert^p\,d\mu(x)\biggr)^{\frac 1p} 
\le Cr^\beta\biggl(\vint_{B} g(x)^p\,d\mu(x)\biggr)^{\frac 1p}, 
\]
and thus
\[
\int_{B}\lvert u(x)-u_{B,p}\rvert^p\,d\mu(x)
\le C(p)r^{\beta p} \int_{B} g(x)^p\,d\mu(x).
\]
Substituting this to~\eqref{e.cap_of_null} and recalling that $B=B(z,r)$, we arrive at
\[
\cp_{\beta,p}\bigl(\{u=0\}\cap\overline{B(z,\tfrac r2)},B(z,r)\bigr)
 \le \frac{C(p)}{(u_{B,p})^p} \int_{B(z,r)} g(x)^p\,d\mu(x). 
\]
The claim follows by reorganizing the terms.
\end{proof}

The following theorem establishes certain boundary Poincar\'e inequalities
for a set $E$ satisfying a capacity density condition. Maz{\cprime}ya's inequality in Theorem \ref{t.Mazya_Poincare}
is a key tool in the proof.

\begin{theorem}\label{t.global_boundary_poincare}
Let $1\le p<\infty$ and $0<\beta\le 1$.
Assume that  $E\subset X$ satisfies a $(\beta,p)$-capacity density condition
with a constant $c_0$.
Then there is a constant $C=C(p,c_0,c_\mu)$ 
such that
\begin{equation}\label{eq.bdry_poinc}
\intav_{B(x,R)} \lvert u(x)\rvert^p\,d\mu(x)
\le C R^{\beta p}\intav_{B(x,R)} g(x)^p\,d\mu(x)
\end{equation}
whenever $u\colon X\to \R$ is a $\beta$-H\"older function  in $X$ 
such that  
$u=0$ in $E$, $g\in\mathcal{D}_H^{\beta}(u)$, and $B(x,R)$ is a ball with
$x\in E$ and  $0<R<\diam(E)/4$.
\end{theorem}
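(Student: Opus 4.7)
The plan is to combine Maz\cprime ya's inequality (Theorem~\ref{t.Mazya_Poincare}) with the capacity density condition, linked together by the obvious monotonicity of variational capacity in the ``obstacle'' set. Since $u$ vanishes on $E$, we have the containment $E\cap\overline{B(x,R/2)}\subset \{u=0\}\cap\overline{B(x,R/2)}$, and any admissible test pair for the larger set is admissible for the smaller one, so
\[
\cp_{\beta,p}\bigl(\{u=0\}\cap\overline{B(x,R/2)},B(x,R)\bigr) \;\ge\; \cp_{\beta,p}\bigl(E\cap\overline{B(x,R/2)},B(x,R)\bigr).
\]

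First I would check the admissible radius: the capacity density condition~\eqref{e.fatness} requires the radius to be strictly less than $\diam(E)/8$, and here we want to apply it at radius $r=R/2$ with the outer ball of radius $2r=R$; the assumption $0<R<\diam(E)/4$ makes this legitimate. Applied at $x\in E$ and $r=R/2$, the capacity density condition yields
\[
\cp_{\beta,p}\bigl(E\cap\overline{B(x,R/2)},B(x,R)\bigr)\;\ge\; c_0\,(R/2)^{-\beta p}\,\mu(B(x,R/2)),
\]
and by the doubling property~\eqref{e.doubling} we have $\mu(B(x,R/2))\ge c_\mu^{-1}\mu(B(x,R))$. Combining these two bounds gives
\[
\cp_{\beta,p}\bigl(\{u=0\}\cap\overline{B(x,R/2)},B(x,R)\bigr)\;\ge\; C(c_0,c_\mu)\,R^{-\beta p}\,\mu(B(x,R)).
\]

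Next I would plug this lower bound into Maz\cprime ya's inequality from Theorem~\ref{t.Mazya_Poincare}, applied with $z=x$ and $r=R$, which is legitimate since $u$ is $\beta$-H\"older on $X$ with $g\in\mathcal{D}_H^\beta(u)$:
\[
\vint_{B(x,R)}\lvert u(y)\rvert^{p}\,d\mu(y)
\le \frac{C(p)}{\cp_{\beta,p}\bigl(\{u=0\}\cap\overline{B(x,R/2)},B(x,R)\bigr)}\int_{B(x,R)} g(y)^p\,d\mu(y).
\]
Substituting the capacity bound and dividing through by $\mu(B(x,R))$ on the right converts the integral into an average, producing the inequality~\eqref{eq.bdry_poinc} with a constant $C=C(p,c_0,c_\mu)$.

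I do not expect genuine obstacles in this argument, as the three inputs fit together cleanly. The only point requiring care is the bookkeeping of radii---matching the ``inner radius $r$, outer radius $2r$'' form of the capacity density condition with the ``inner radius $r/2$, outer radius $r$'' form appearing in Maz\cprime ya's inequality, and verifying that $R/2<\diam(E)/8$ under the hypothesis $R<\diam(E)/4$ so that~\eqref{e.fatness} actually applies.
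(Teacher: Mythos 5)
Your proposal is correct and follows essentially the same route as the paper's proof: apply the capacity density condition at radius $r=R/2<\diam(E)/8$, use monotonicity of capacity (since $E\subset\{u=0\}$) together with the doubling condition, and conclude via Maz\cprime ya's inequality (Theorem~\ref{t.Mazya_Poincare}) on the ball $B(x,R)=2B(x,r)$. The radius bookkeeping you flag is exactly the only point the paper's proof attends to as well.
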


\begin{proof}
Let 
$x\in E$ and $0<R<\diam(E)/4$. 
We denote $r=R/2<\diam(E)/8$. 
Applying the capacity density condition in the ball $B=B(x,r)$ gives
\begin{align*}
\cp_{\beta,p}(E\cap \iol{B},2B)\ge c_0r^{-\beta p}\mu(B)\,.
\end{align*}
Write $\{u=0\}=\{y\in X :  u(y)=0\}\supset E$. By the monotonicity of capacity and the doubling condition we have
\[
\frac{1}{\cp_{\beta,p}(\{u=0\}\cap \iol{B},2B)}
\le \frac{1}{\cp_{\beta,p}(E\cap \iol{B},2B)}
\le \frac{C(c_0) r^{\beta p}}{\mu(B)}\le \frac{C(c_0,c_\mu) R^{\beta p}}{\mu(2 B)}.
\]
The desired inequality, for the ball $B(x,R)=B(x,2 r)=2B$, follows from Theorem~\ref{t.Mazya_Poincare}.
\end{proof}

\section{Necessary and sufficient geometrical conditions}\label{s.geometry}

In this section we adapt the approach in \cite{DLV2021} by 
giving necessary and sufficient geometrical conditions for the $(\beta,p)$-capacity density condition.
These are given  in terms of the following upper Assouad codimension 
 \cite{KaenmakiLehrbackVuorinen2013}.

\begin{definition}
When $E\subset X$ and $r>0$, the
open $r$-neighbourhood of $E$
is the set 
\[E_r=\{x\in X : d(x,E)<r\}.\]
The upper Assouad codimension of $E\subset X$,
denoted by $\ucodima(E)$, is
the infimum of all $Q\ge 0$ for
which there is $c>0$ such that
\[
\frac{\mu(E_r\cap B(x,R))}{\mu(B(x,R))}\ge c\Bigl(\frac{r}{R}\Bigr)^Q
\]
for every $x\in E$ and all $0<r<R<\diam(E)$. 
If $\diam(E)=0$, then
the restriction $R<\diam(E)$ is removed.
\end{definition}

Observe that a larger
 set has a smaller Assouad codimension.
We need suitable versions of Hausdorff contents
from \cite{MR3631460}.

\begin{definition}
 The ($\rho$-restricted) Hausdorff content of codimension $q\ge 0$ 
of a set $F\subset X$ is defined by  
\[
\Ha^{\mu,q}_\rho(F)=\inf\Biggl\{\sum_{k} \mu(B(x_k,r_k))\,r_k^{-q} :
F\subset\bigcup_{k} B(x_k,r_k)\text{ and } 0<r_k\leq \rho \Biggr\}.
\]
\end{definition}

The following lemma is \cite[Lemma~5.1]{MR3631460}.
It provides a lower bound for the Hausdorff
content of a set truncated in a fixed ball in terms of the measure and radius of the truncating ball.
The proof uses completeness  via construction of a compact Cantor-type set inside $E$, to which
mass is uniformly distributed by a Carath\'eodory construction.

\begin{lemma}\label{l.measure_to_hausdorff}
Assume that $X$ is a complete metric space.
Let $E\subset X$ be a closed set, and assume that
$\ucodima(E)<q$. Then there exists a constant $C>0$ such that
\begin{equation}\label{e.hausdorff_content_density}
\Ha^{\mu,q}_r(E\cap \overline{B(x,r)})\ge Cr^{-q} \mu(B(x,r))
\end{equation}
for every $x\in E$ and all  $0<r<\diam(E)$.
\end{lemma}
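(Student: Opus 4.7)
The plan is to deduce~\eqref{e.hausdorff_content_density} from the mass distribution principle. Concretely, after fixing some $Q\in(\ucodima(E),q)$ and extracting the associated constant from the codimension definition, the goal is to construct a Borel measure $\nu$ supported on a compact subset of $E\cap\overline{B(x,r)}$ that satisfies both the total mass bound $\nu(E\cap\overline{B(x,r)})\gtrsim r^{-q}\mu(B(x,r))$ and the Frostman-type upper bound $\nu(B(y,s))\lesssim s^{-q}\mu(B(y,s))$ for every ball $B(y,s)$ with $s\le r$. Once such a $\nu$ is in hand, for any cover $\{B(y_k,r_k)\}$ of $E\cap\overline{B(x,r)}$ by balls with $r_k\le r$ we have
\[
\sum_k\mu(B(y_k,r_k))r_k^{-q}\gtrsim\sum_k\nu(B(y_k,r_k))\ge\nu(E\cap\overline{B(x,r)})\gtrsim r^{-q}\mu(B(x,r)),
\]
and taking the infimum over covers yields the claim.

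To build $\nu$ I would carry out a Cantor-type stopping-time construction at geometric scales $\lambda^k r$, where $\lambda\in(0,1/4)$ is a small parameter depending on $c_\mu$, $Q$ and $q$. Setting $\mathcal{B}_0=\{\overline{B(x,r)}\}$, one inductively produces finite families $\mathcal{B}_k$ of pairwise disjoint closed balls of radius $\lambda^k r$ centered at points of $E$, with every element of $\mathcal{B}_{k+1}$ contained in a unique ``parent'' in $\mathcal{B}_k$. The children of a parent $\overline{B(z,\lambda^k r)}$ are to be extracted from a maximal $\lambda^{k+1}r$-separated subset of $E\cap B(z,\lambda^k r/2)$; applying the Assouad codimension estimate to the neighborhood $E_{\lambda^{k+1}r}\cap B(z,\lambda^k r/2)$ together with the doubling inequality gives a controlled lower bound, of order $\lambda^{-Q}$, on the number of available children. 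Assigning each $B\in\mathcal{B}_k$ a weight that is compatible with the offspring relation (the weight of a parent equals the total weight of its children), and prescribing the total weight at level zero to be of order $r^{-q}\mu(B(x,r))$, a standard Carath\'eodory-type construction produces a Borel measure $\nu$ on the limit set $F=\bigcap_k\bigcup_{B\in\mathcal{B}_k}B$. Completeness of $X$ enters here to guarantee that $F$ is non-empty and compact as a decreasing intersection of closed bounded sets; since each $\mathcal{B}_k$ is centered on $E$ and $E$ is closed, one also has $F\subset E\cap\overline{B(x,r)}$.

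The Frostman upper bound $\nu(B(y,s))\lesssim s^{-q}\mu(B(y,s))$ should then follow from a routine covering step: for a ball $B(y,s)$ with $\lambda^{k+1}r<s\le\lambda^k r$, the disjointness of $\mathcal{B}_k$ combined with doubling forces $B(y,s)$ to meet only a bounded number, depending on $c_\mu$, of members of $\mathcal{B}_k$, and the chosen weights convert this into the required estimate once $\lambda$ has been fixed small in terms of $q-Q$. The main obstacle is the calibration of the weights in the Cantor construction: one has to simultaneously propagate enough mass down to the limit set $F$ while keeping the local density below $\mu(B(\cdot,s))s^{-q}$. The strict gap $\ucodima(E)<Q<q$ provides exactly the wiggle room needed for this balancing act, via the quantitative Assouad codimension bound on $\mu(E_\rho\cap B)$, and completeness of $X$ plays its decisive role at the very end by guaranteeing that the nested intersection $F$ supports a genuine, non-trivial measure.
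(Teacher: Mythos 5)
Your plan is exactly the argument behind this lemma: the paper does not prove it but cites \cite[Lemma~5.1]{MR3631460}, describing precisely the strategy you outline --- a Cantor-type stopping construction inside $E$ whose compact limit set (this is where completeness enters) carries a Carath\'eodory-distributed measure satisfying a Frostman bound, with the gap $\ucodima(E)<Q<q$ providing the geometric decay needed to propagate mass. So your proposal is correct and coincides with the proof of the cited result.
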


On the other hand, Hausdorff contents gives a lower bound for capacity by following lemma.
The proof is based on a covering argument, where the
covering balls are chosen by chaining. 
The proof is a more sophisticated variant of the argument given in Example \ref{e.meas_density}.
Similar
covering arguments via chaining have been widely used; see for instance \cite{HeinonenKoskela1998}. \ed

\begin{lemma}\label{l.codim_sufficient}
Assume that $X$ is a connected metric space. 
Let $0<\beta\le 1$,  $1\le  p<\infty$, and $0< \eta<p$. 
Assume that $B=B(x_0,r)\subset X$ is a ball with $r<\diam(X)/8$,
and assume that $F\subset \iol{B}$ is a closed set.
Then there is a constant $C=C(\beta,p,\eta,c_\mu)>0$ such that
\[
r^{\beta(p-\eta)}\cp_{\beta,p}(F,2B)\ge C\mathcal{H}^{\mu,\beta\eta}_{20r}(F)\,.
\]
\end{lemma}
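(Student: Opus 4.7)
My plan is to construct, for each $x \in F$, a stopping radius $r(x) \in (0, 4r]$ at which $g$ has enough integrated mass, and then to conclude by a Vitali-type covering argument.

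By Remark~\ref{r.varcap_properties} it suffices to consider admissible pairs $(u,g)$ for the capacity with $0 \le u \le 1$, so that $u \equiv 1$ on $F$, $u \equiv 0$ on $X \setminus 2B$, and $g \in \mathcal{D}_H^{\beta}(u)$. The heart of the proof is to show that for every $x \in F$ there exists a radius $r(x) \in (0, 4r]$ with
\[
\intav_{B(x, r(x))} g^p \, d\mu \;\ge\; c_0 \, r(x)^{-\beta \eta} \, r^{-\beta(p-\eta)},
\]
for a small constant $c_0 = c_0(\beta, p, \eta, c_\mu) > 0$. I would argue by contradiction: if the reverse strict inequality held for every $s \in (0, 4r]$, then the $(\beta, p, p)$-Poincar\'e inequality of Theorem~\ref{t.pp_poincare} applied on the dyadic chain $B_j := B(x, 4r \cdot 2^{-j})$ telescopes, and substituting the hypothesized bound yields
\[
|u(x) - u_{B(x, 4r)}| \;\le\; C \sum_{j \ge 0} (4r \cdot 2^{-j})^\beta \Big( \intav_{B_j} g^p \, d\mu \Big)^{1/p} \;\le\; C \, c_0^{1/p},
\]
since the $r$-powers cancel exactly and the remaining geometric series $\sum_j 2^{-j \beta(p-\eta)/p}$ converges. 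This convergence uses \emph{precisely} the strict inequality $\eta < p$; this is the only use of that inequality in the proof, and it is what produces the gap factor $r^{\beta(p-\eta)}$ in the conclusion.

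Closing the contradiction requires $u_{B(x, 4r)} \le 1 - \delta$ for some $\delta = \delta(c_\mu) > 0$. Since $u$ vanishes outside $2B$ and $x \in F \subset \overline{B}$,
\[
u_{B(x, 4r)} \;\le\; \frac{\mu(B(x, 3r))}{\mu(B(x, 4r))}.
\]
Using the geodesic structure of $X$ together with $r < \diam(X)/8$, I would select a point $y \in X$ with $d(x,y) = 7r/2$, note the inclusions $B(y, r/2) \subset B(x, 4r) \setminus B(x, 3r)$ and $B(x, 4r) \subset B(y, 15r/2)$, and invoke doubling four times to conclude $\mu(B(x, 4r) \setminus B(x, 3r)) \ge c_\mu^{-4} \mu(B(x, 4r))$, hence $u_{B(x, 4r)} \le 1 - c_\mu^{-4}$. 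Choosing $c_0$ small enough so that $C c_0^{1/p} < c_\mu^{-4}$ yields the contradiction and produces the desired $r(x)$.

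Once each $r(x)$ is constructed, the standard $5r$-covering lemma applied to $\{B(x, r(x)) : x \in F\}$ yields a disjoint subfamily $\{B(x_i, r_i)\}$ whose $5$-dilates cover $F$; since $5 r_i \le 20 r$, this family is admissible for $\mathcal{H}^{\mu, \beta \eta}_{20 r}(F)$. Rewriting the stopping-radius inequality as $\mu(B(x_i, r_i))\, r_i^{-\beta \eta} \le c_0^{-1} r^{\beta(p-\eta)} \int_{B(x_i, r_i)} g^p \, d\mu$, summing, and using disjointness together with doubling on the $5$-dilates gives
\[
\mathcal{H}^{\mu, \beta \eta}_{20 r}(F) \;\le\; C\, r^{\beta(p - \eta)} \int_X g^p \, d\mu,
\]
and taking the infimum over admissible $(u, g)$ concludes the proof. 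The main obstacle will be the reverse-doubling-type estimate $u_{B(x, 4r)} \le 1 - \delta$; this is what ties the chaining constant to the support of $u$, and it depends essentially on connectivity of $X$.
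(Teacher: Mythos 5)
Your proposal is correct and follows essentially the same route as the paper: a telescoping/chaining argument with the $(\beta,p,p)$-Poincar\'e inequality to produce a stopping ball $B(x,r(x))$ for each $x\in F$ (the paper extracts the stopping index by comparing two series rather than by contradiction, which is the same argument), a reverse-doubling bound $u_{B(x,4r)}\le 1-\delta$ (which you re-derive by hand instead of citing \eqref{e.rev_dbl_decay}), and the $5r$-covering lemma to sum up. The minor variations (centering the top ball at $x$ rather than at $x_0$, and the explicit geodesic construction of the annulus point) do not change the substance.
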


\begin{proof}
We adapt the proof of \cite[Lemma 4.6]{DLV2021} for our purposes. 
Let $u\in \Lip_\beta(X)$ be such that $0\le u\le 1$ in $X$, $u=1$ in $F$ and $u=0$ in $X\setminus 2B$.
Let also $g\in\mathcal{D}_H^{\beta}(u)$.
We aim to cover the set $F$ by balls that are chosen by chaining. In order to do so, we fix $x\in F$ and write $B_0=4B=B(x_0,4 r)$,
$r_0=4 r$, $r_j=2^{-j+1}r$ and $B_j=B(x,r_j)$, $j=1,2,\ldots$.
Observe that $B_{j+1}\subset B_j$ and $\mu(B_j)\le c_\mu^3 \mu(B_{j+1})$ if
$j=0,1,2,\ldots$.

By  the above properties of $u$ and the reverse doubling inequality \eqref{e.rev_dbl_decay}, we obtain
\begin{align*}
0\le u_{B_0}=\vint_{B_0} u(y)\,d\mu(y) \le \frac{\mu(2B)}{\mu(4 B)}
\le c_R < 1.
\end{align*}
Since $x\in F$, we find that $u(x)=1$ and therefore
\[
\lvert u(x)- u_{B_0}\rvert  \ge 1-u_{B_0}\ge 1-c_R=C(c_\mu)>0.
\]
We write $\delta=\beta(p-\eta)/p>0$.
Using the Poincar\'e inequality in Theorem \ref{t.pp_poincare} and abbreviating $C=C(\beta,p,\eta,c_\mu)$,  we obtain 
\begin{align*}
\sum_{j=0}^\infty 2^{-j\delta}&=C(1-c_R)
\le C \lvert u(x)-u_{B_0}\rvert\\&\le C\sum_{j=0}^\infty \lvert u_{B_{j+1}}-u_{B_j}\rvert\le C\sum_{j=0}^\infty \frac{\mu(B_j)}{\mu(B_{j+1})} \vint_{B_j} \lvert u(y)-u_{B_j}\rvert\,d\mu(y)
\\& \le C\sum_{j=0}^\infty \left(\vint_{B_j} \lvert u(y)-u_{B_j}\rvert^p\,d\mu(y)\right)^{\frac{1}{p}}\le C\sum_{j=0}^\infty  r_j^{\beta} 
\biggl(\intav_{B_j} g(y)^p\,d\mu(y)\biggr)^{\frac{1}{p}}\,.
\end{align*}
By comparing the series in the left- and right-hand side of these  inequalities, we see that there exists $j\in \{0,1,2,\ldots\}$
depending on $x$ such that
\begin{equation}\label{e.covering_ineq}
2^{-j\delta p}\le C(\beta,p,\eta,c_\mu) r_{j}^{\beta p} \intav_{B_{j}} g(y)^p\,d\mu(y) .
\end{equation}
Write $r_x=r_{j}$ and
 $B_x=B_{j}$. Then $x\in B_x$ and straightforward estimates
based on \eqref{e.covering_ineq} give
\[
 \mu(B_x) r_x^{-\beta\eta}\le C(\beta,p,\eta,c_\mu)  r^{\beta(p-\eta)} \int_{B_x}  g(y)^p\,d\mu(y)\,.
\]

By the $5r$-covering lemma \cite[Lemma~1.7]{MR2867756}, we obtain points $x_k\in F$,
$k=1,2,\ldots$, such that the  balls $B_{x_k}\subset B_0=4B$ with radii $r_{x_k}\le 4r$  are
pairwise disjoint and 
$F\subset \bigcup_{k=1}^\infty 5B_{x_k}$. Hence,
\begin{align*}
\mathcal{H}^{\mu,\beta\eta}_{20 r}(F) &\le \sum_{k=1}^\infty  \mu(5B_{x_k})
(5r_{x_k})^{-\beta\eta}
 \le  C \sum_{k=1}^\infty  r^{\beta(p-\eta)} \int_{B_{x_k}}g(x)^p\,d\mu(x)\\
&\le  C r^{\beta(p-\eta)} \int_{4 B}g(x)^p\,d\mu(x)
\le C r^{\beta(p-\eta)} \int_{X}g(x)^p\,d\mu(x)\,,
\end{align*}
where  $C=C(\beta,p,\eta,c_\mu)$. We remark that the scale $20r$ of the Hausdorff content in the left-hand side comes
from the fact that radii of the covering balls $5B_{x_k}$ for $F$ are bounded by $20r$.
The desired inequality follows by taking infimum over all functions
$g\in\mathcal{D}_H^{\beta}(u)$ and then over all functions $u$ as above.
\end{proof}
%
%

The following theorem gives an upper bound for the upper Assouad codimension for
sets satisfying a capacity density condition.
We emphasize the strict inequality $\ucodima(E)<\beta p$, completeness and connectedness in the assumptions below.

\begin{theorem}\label{t.assouad_sufficient}
Assume that $X$ is a complete and connected  metric space. 
Let $1\le p<\infty$ and $0<\beta\le 1$. 
Let $E$ be a closed set with
$\ucodima(E)<\beta p$.
Then  $E$ satisfies a $(\beta,p)$-capacity density condition.
\end{theorem}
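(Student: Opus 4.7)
The plan is to chain the two preceding lemmas. Since $\ucodima(E)<\beta p$, I first fix an intermediate exponent: choose $q$ with $\ucodima(E)<q<\beta p$ and set $\eta=q/\beta\in(0,p)$, so that $\beta\eta=q$. The strict inequality in the hypothesis enters precisely here.

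Fix $x\in E$ and $0<r<\diam(E)/8$, and write $B=B(x,r)$ and $F=E\cap\iol{B}$. Since $\diam(E)\le\diam(X)$ we have $r<\diam(X)/8$, so Lemma~\ref{l.codim_sufficient} applied with this $\eta$ yields
\[
r^{\beta(p-\eta)}\,\cp_{\beta,p}(F,2B)\ge C(\beta,p,\eta,c_\mu)\,\Ha^{\mu,\beta\eta}_{20r}(F).
\]
If I can establish the codimensional Hausdorff content lower bound
\begin{equation}\label{plan.goal}
\Ha^{\mu,\beta\eta}_{20r}(F)\ge C(c_\mu,q)\,r^{-\beta\eta}\mu(B),
\end{equation}
then plugging it in gives $\cp_{\beta,p}(F,2B)\ge C r^{-\beta p}\mu(B)$, which is exactly \eqref{e.fatness}. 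So the entire task reduces to proving \eqref{plan.goal}.

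For \eqref{plan.goal}, Lemma~\ref{l.measure_to_hausdorff} (whose hypothesis $\ucodima(E)<q$ holds by construction, and which uses completeness via its Cantor-type mass construction) gives the analogous bound for $\Ha^{\mu,q}_{r}(F)$, i.e.\ for covers of $F$ by balls of radius at most $r$. The mismatch between scales $r$ and $20r$ is the only mild obstacle, and it is handled by a short doubling argument: given an admissible cover $\{B(x_k,r_k)\}$ of $F$ with $r_k\le 20r$, either every $r_k\le r$, in which case Lemma~\ref{l.measure_to_hausdorff} applies directly, or some ball $B(x_k,r_k)$ has $r<r_k\le 20r$. In the second case that ball meets $F$ at a point $y$, and from $d(y,x)\le r<r_k$ the inclusions $B(x,r)\subset B(y,2r_k)$ and $B(y,r_k)\subset B(x_k,2r_k)$ combined with a bounded number of applications of \eqref{e.doubling} give $\mu(B(x_k,r_k))\ge C(c_\mu)\mu(B)$; together with $r_k^{-q}\ge(20r)^{-q}$, this single ball already contributes at least $C(c_\mu,q)\,r^{-q}\mu(B)$ to $\sum_k\mu(B(x_k,r_k))r_k^{-q}$, so \eqref{plan.goal} holds in either case.

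The heart of the argument therefore lies in the two preliminary results: Lemma~\ref{l.codim_sufficient} (a chaining argument built on the $(\beta,p,p)$-Poincar\'e inequality that converts capacity to Hausdorff content) and Lemma~\ref{l.measure_to_hausdorff} (which converts the Assouad codimension bound into a content bound via a Carath\'eodory construction). The scale-passage sketched above is the only additional input needed to splice them together, and there is no obstacle of real depth in the proof of Theorem~\ref{t.assouad_sufficient} itself.
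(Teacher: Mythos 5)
Your proposal is correct and follows essentially the same route as the paper: fix an intermediate exponent $\eta$ with $\ucodima(E)<\beta\eta$ and $\eta<p$, chain Lemma~\ref{l.codim_sufficient} with Lemma~\ref{l.measure_to_hausdorff}, and bridge the scale mismatch between $\Ha^{\mu,\beta\eta}_{20r}$ and $\Ha^{\mu,\beta\eta}_{r}$ by a doubling argument (which the paper leaves as ``a simple covering argument'' and you spell out; just note that in your second case one should first discard cover balls not meeting $F$, which only decreases the sum). No substantive difference from the paper's proof.
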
 
%
%
%
%
\begin{proof}
Fix $0<\eta<p$ such that 
$\ucodima(E)<\beta \eta$.
Let $x\in E$ and 
$0<r<\diam(E)/8$, and write $B=B(x,r)$.
By a simple covering argument using the doubling condition, it follows that
$\mathcal{H}^{\mu,\beta\eta}_{20r}(E\cap \iol{B})\ge 
C\mathcal{H}^{\mu,\beta\eta}_{r}(E\cap \iol{B})$ with a constant $C$ independent of $B$.
Applying also Lemma~\ref{l.codim_sufficient} and then Lemma~\ref{l.measure_to_hausdorff}  gives
\begin{align*}
r^{\beta(p-\eta)}\cp_{\beta,p}(E\cap \iol{B},2B)
\ge C\mathcal{H}^{\mu,\beta\eta}_{20r}(E\cap \iol{B})\ge C\mathcal{H}^{\mu,\beta\eta}_{r}(E\cap \iol{B})
\ge r^{-\beta\eta}\mu(B)\,.
\end{align*}
After simplification, we obtain
\[
\cp_{\beta,p}(E\cap \iol{B},2B)\ge Cr^{-\beta p}\mu(B)\,,
\]
and the claim follows.
\end{proof}

Conversely, by using boundary Poincar\'e inequalities, it is 
easy to show that a capacity density condition
implies an upper bound for the upper Assouad codimension.

\begin{theorem}\label{t.easy_converse}
Let
$1\le p<\infty$ and $0<\beta\le 1$. Assume that $E\subset X$ satisfies a $(\beta,p)$-capacity density condition.
 Then
$\ucodima(E)\le \beta p$.
\end{theorem}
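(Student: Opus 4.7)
The aim is to verify, for every $x\in E$ and every pair of scales $0<r<R<\diam(E)$, the quantitative inequality
\[
\frac{\mu(E_r\cap B(x,R))}{\mu(B(x,R))}\ge c\bigl(r/R\bigr)^{\beta p},
\]
with $c>0$ independent of $x,r,R$; by the definition of upper Assouad codimension, this gives $\ucodima(E)\le\beta p$. The strategy is to feed the boundary Poincar\'e inequality from Theorem~\ref{t.global_boundary_poincare} with a carefully chosen H\"older function that vanishes on $E$ and saturates outside $E_r$.

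In the main range $0<r<R<\diam(E)/4$, I would take the test function
\[
u(y)=\min\bigl\{1,(d(y,E)/r)^\beta\bigr\},\qquad y\in X,
\]
which vanishes on $E$, equals $1$ on $X\setminus E_r$, and is $\beta$-H\"older on $X$ with constant $r^{-\beta}$ (since $d(\cdot,E)$ is $1$-Lipschitz and $t\mapsto\min\{1,(t/r)^\beta\}$ is easily seen to be $\beta$-H\"older on $[0,\infty)$ with this constant). To identify a useful Haj{\l}asz gradient, I would apply the nonlocal glueing lemma (Lemma~\ref{l.glueing}) with the Borel set $A=E_r$ and with the constant function $1$ in the role of the background function outside $A$, whose trivial Haj{\l}asz gradient is $0$; this immediately yields
\[
g:=r^{-\beta}\mathbf{1}_{E_r}\in\mathcal{D}_H^{\beta}(u).
\]
Writing $\tau=\mu(E_r\cap B(x,R))/\mu(B(x,R))$, the elementary bound $\lvert u\rvert^p\ge\mathbf{1}_{X\setminus E_r}$ combined with Theorem~\ref{t.global_boundary_poincare} gives
\[
1-\tau\le\vint_{B(x,R)}\lvert u\rvert^p\,d\mu\le C R^{\beta p}\vint_{B(x,R)}g^p\,d\mu=C(R/r)^{\beta p}\tau,
\]
and rearranging yields $\tau\ge c(r/R)^{\beta p}$ with $c=c(p,c_0,c_\mu)>0$.

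The remaining large-scale case $R\ge\diam(E)/4$, which only arises when $\diam(E)<\infty$, is handled by routine doubling reductions. If $r\ge\diam(E)/8$ then $R/r\le 8$, and the inclusion $B(x,r)\subset E_r\cap B(x,R)$ combined with the doubling property immediately gives the desired bound with $(r/R)^{\beta p}\le 1$ absorbed into the constant; if instead $r<\diam(E)/8$, I would apply the main estimate at the intermediate radius $R_0=\diam(E)/8$ and absorb the ratio $\mu(B(x,R_0))/\mu(B(x,R))$ using doubling, which is legitimate because $R/R_0<8$.

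The only delicate point is the construction of the Haj{\l}asz gradient $g=r^{-\beta}\mathbf{1}_{E_r}$: because Haj{\l}asz gradients are nonlocal, one cannot simply truncate a global gradient of $u$ to $E_r$. The glueing lemma circumvents this obstruction by letting us transfer from the trivial gradient of a constant function to a concrete gradient of $u$ supported exactly on $E_r$, and this tight localization is precisely what makes the right-hand side of the Poincar\'e inequality scale correctly in $r$ and produces the exponent $\beta p$.
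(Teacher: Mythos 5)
Your proposal is correct and follows essentially the same route as the paper: the same test function $u=\min\{1,r^{-\beta}d(\cdot,E)^\beta\}$, the same use of Lemma~\ref{l.glueing} to produce $g=r^{-\beta}\mathbf{1}_{E_r}$, and the same application of Theorem~\ref{t.global_boundary_poincare}. The only (harmless) differences are cosmetic: you absorb the paper's case split on whether $\mu(E_r\cap B)\ge\tfrac12\mu(B)$ into a single algebraic rearrangement of $1-\tau\le C(R/r)^{\beta p}\tau$, and you spell out the large-scale reduction that the paper dispatches with a one-line appeal to doubling.
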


\begin{proof}
We adapt the proof of \cite[Theorem 5.3]{DLV2021} to our setting.
By using the doubling condition, 
it suffices to show that
\begin{equation}\label{eq.codim_est_easy}
\frac{\mu(E_r\cap B(w,R))}{\mu(B(w,R))}\ge c\Bigl(\frac{r}{R}\Bigr)^{\beta p},
\end{equation}
for all $w\in E$ and $0<r<R<\diam(E)/4$,
where the constant $c$ is independent of $w$, $r$ and $R$.

If $\mu(E_r\cap B(w,R))\ge \frac 1 2 \mu(B(w,R))$, the claim is clear since $\bigl(\frac{r}{R}\bigr)^{\beta p}\le 1$. Thus we may assume in the sequel that
$\mu(E_r\cap B(w,R)) < \tfrac 1 2 \mu(B(w,R))$, whence
\begin{equation}\label{eq.compl_meas_easy}
\mu(B(w,R)\setminus E_r) \ge \tfrac 1 2 \mu(B(w,R))>0.
\end{equation}

We define a  $\beta$-H\"older function $u\colon X\to\R$ by 
\[
u(x)=\min\{1,r^{-\beta}d(x,E)^\beta\}\,,\qquad  x\in X.
\]
Then $u=0$ in $E$, $u=1$ in $X\setminus E_{r}$, and 
\[
\lvert u(x)-u(y)\rvert \le r^{-\beta}d(x,y)^\beta \quad \text{ for all } x,y\in X.
\]
We obtain
\begin{equation}\label{eq.lhs_low_easy}
\begin{split}
R^{-\beta p}\int_{B(w,R)} \lvert u(x)\rvert^p\,d\mu(x) &\ge 
R^{-\beta p}\int_{B(w,R)\setminus E_{r}} \lvert u(x)\rvert^p\,d\mu(x)
\\&= 
R^{-\beta p}\mu(B(w,R)\setminus E_{r})
\ge \tfrac 1 2 R^{-\beta p}\mu(B(w,R)),
\end{split}
\end{equation}
where the last step follows from~\eqref{eq.compl_meas_easy}.

Since $u=1$ in $X\setminus E_{r}$ and
$u$ is a $\beta$-H\"older function with a constant $r^{-\beta}$, 
Lemma \ref{l.glueing} implies that
$g=r^{-\beta}\mathbf{1}_{E_{r}}\in\mathcal{D}_H^{\beta}(u)$.
We observe from \eqref{eq.lhs_low_easy}  and 
Theorem \ref{t.global_boundary_poincare} that
\begin{equation}\label{eq.rhs_upper_easy}
\begin{split}
C r^{-\beta p}\mu(E_r\cap B(w,R)) &= C\int_{B(w,R)}g(x)^p\,d\mu(x)\\
&\ge 
2 R^{-\beta p}\int_{B(w,R)} \lvert u(x)\rvert^p\,d\mu(x)\ge R^{-\beta p}\mu(B(w,R))\,,
\end{split}
\end{equation}
where the constant $C$ is independent of $w$, $r$ and $R$.
The claim~\eqref{eq.codim_est_easy} follows  from 
\eqref{eq.rhs_upper_easy}.
\end{proof}

Observe that the upper bound $\ucodima(E)\le \beta p$ appears in the conclusion of Theorem~
\ref{t.easy_converse}. The rest of the paper is devoted  to  
showing the strict  inequality $\ucodima(E)< \beta p$  for $1<p<\infty$, which
 leads to a characterization of the $(\beta,p)$-capacity density condition in terms of this strict dimensional inequality.

 Our strategy is to combine the methods in \cite{MR3895752} and \cite{MR3673660}
 to prove a significantly stronger variant of the boundary Poincar\'e inequality, which involves maximal operators, see Theorem~\ref{t.main_boundedness}. We use this maximal inequality to prove a Hardy inequality, Theorem~\ref{t.improved}. 
This variant 
leads to 
the characterization 
in Theorem~\ref{t.main_characterization} of the $(\beta,p)$-capacity density condition
in terms of 
the strict inequality $\ucodima(E)<\beta p$, among other geometric and analytic conditions. 
Certain additional geometric
assumptions are needed for the proof of Theorem~\ref{t.main_characterization},
namely geodesic property of $X$. 
We are not aware, to which extent this geometric assumption can be relaxed. 

\section{Local boundary Poincar\'e inequality}\label{s.truncation}

Our  next aim is to show Theorem~\ref{t.main_boundedness}, which concerns inequalities  localized to a fixed ball $B_0$ centered at $E$.  The  proof of this theorem requires that we first truncate the closed set $E$ to a smaller set $E_Q$ contained in a Whitney-type ball $\iol{Q}\subset B_0$ such that a local variant of the boundary Poincar\'e inequality remains valid. The choice of the Whitney-type ball $Q$ and the construction of the set $E_Q$ are given in this section. 

This truncation construction, that we borrow from \cite{MR3673660}, is done in such a way that a local Poincar\'e inequality holds, see Lemma~\ref{l.b_poincare}. This inequality is local in two senses: on one hand, the inequality holds only for balls $B\subset Q^*$; on the other hand, it holds for functions vanishing on the truncated set $E_Q$. Due to the subtlety of its consequences, the truncation in this section may seem arbitrary, but it is actually needed for our purposes.

 Assume that  $E$ is a closed set in a geodesic space $X$. Fix a ball $B_0=B(w,R)\subset X$ with $w\in E$ and $R<\diam(E)$. 
Define a family of balls
\begin{equation}\label{e.B_0}
\mathcal{B}_0=\{B\subset X\,:\, B\text{ is a ball such that } B\subset {B_0}\}\,.
\end{equation}
We also need a single Whitney-type ball  $Q=B(w,r_Q)\subset B_0$, where
\begin{equation}\label{e.q-radius}
 r_Q=\frac{R}{128}\,.
\end{equation}
The 4-dilation of the Whitney-type ball is denoted by $Q^*=4Q=B(w,4 r_Q)$.
 Now it holds that $Q^*\subsetneq X$, since otherwise 
\[
\diam(X)=\diam(Q^*)\le R/16< \diam(E)\le \diam(X)\,.
\] 
The following properties (W1)--(W4) 
are straightforward to verify.
For instance, property (W1) follows from inequality \eqref{e.diams}; we omit the simple proofs. 
\begin{itemize} 
\item[(W1)] If $B\subset X$ is a ball such that $B\cap \iol{Q}\not=\emptyset\not=2B\cap(X\setminus Q^*)$, then
$\diam(B)\ge 3r_Q/4$;
\item[(W2)] If  $B\subset Q^*$ is a ball, then $B\in\mathcal{B}_0$;
\item[(W3)] If $B\subset Q^*$ is a ball, $x\in B$  and $0<r\le \diam(B)$, then
$B(x,5r)\in\mathcal{B}_0$;
\item[(W4)] If $x\in Q^*$ and $0<r\le 2\diam(Q^*)$, then $B(x,r)\in\mathcal{B}_0$.
\end{itemize}
Observe that there is some overlap between
the properties (W2)--(W4). The slightly different formulations will conveniently 
guide the reader
in the sequel.

 The following Lemma \ref{l.truncation} gives us the truncated set $E_Q\subset \overline{Q}$ that contains big pieces of the original set $E$ at small scales. This big pieces property is not always satisfied
by $E\cap Q$, so it cannot be used instead.

\begin{lemma}\label{l.truncation}
Assume that $E\subset X$ is a closed set in a geodesic space $X$ and that $Q=B(w,r_Q)$ for $w\in E$ and $r_Q>0$.
Let $E_{Q}^0=E\cap \overline{\frac 1 2 Q}$, define inductively, for every $j\in\N$, that
\[
E_{Q}^{j}=\bigcup_{x\in E_{Q}^{j-1}} E\cap \overline{B(x,2^{-j-1}r)}\,,\quad
\text{ and set } \quad
E_{Q}=\overline{\bigcup_{j\in \N_0} E_{Q}^{j}}.
\]
Then the following statements hold:
\begin{itemize}
\item [(a)] $w\in E_{Q}$;
\item[(b)] $E_{Q}\subset E$;
\item[(c)] $E_{Q}\subset \overline{Q}$;
\item[(d)] $E^{j-1}_{Q}\subset E^{j}_{Q}\subset E_{Q}$  for every $j\in\N$.
\end{itemize}
\end{lemma}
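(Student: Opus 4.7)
The plan is to verify the four items in the order (a), (b), (d), (c), since (c) requires the most work and benefits from (b). The proof is essentially elementary; the only substantive point is to keep track of how far an element of $E_Q^j$ can drift from the center $w$ as $j$ grows, which is controlled by the summable geometric series $\sum_{k\ge 0} 2^{-k-1} r_Q = r_Q$. I will read ``$2^{-j-1}r$'' in the definition of $E_Q^j$ as ``$2^{-j-1} r_Q$'', which is the only reading consistent with the whole construction.

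First, (a) is immediate: $w\in E$ and $w\in\overline{\tfrac12 Q}$, so $w\in E_Q^0\subset \bigcup_{j\in\N_0} E_Q^j\subset E_Q$. For (b), a trivial induction on $j$ shows $E_Q^j\subset E$ for every $j\in\N_0$: the base case is $E_Q^0\subset E$, and if $E_Q^{j-1}\subset E$ then $E_Q^j\subset E$ since the definition explicitly intersects with $E$. Hence $\bigcup_{j\in\N_0} E_Q^j\subset E$, and because $E$ is closed, taking closure gives $E_Q\subset E$.

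For (d), the second inclusion $E_Q^j\subset E_Q$ is tautological from the definition of $E_Q$. For the first inclusion $E_Q^{j-1}\subset E_Q^j$, take any $x\in E_Q^{j-1}$; then $x\in E$ by (b), and trivially $x\in \overline{B(x,2^{-j-1}r_Q)}$, so $x\in E\cap \overline{B(x,2^{-j-1}r_Q)}\subset E_Q^j$.

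The main step is (c). I will show by induction on $j$ that
\[
E_Q^j\subset \overline{B\bigl(w,\,r_Q(1-2^{-j-1})\bigr)}.
\]
For $j=0$ this reduces to $E_Q^0\subset \overline{B(w,r_Q/2)}=\overline{\tfrac12 Q}$, which holds by definition. For the inductive step, assume the inclusion for $j-1$, and let $y\in E_Q^j$. By definition there exists $x\in E_Q^{j-1}$ with $y\in \overline{B(x,2^{-j-1}r_Q)}$, and the triangle inequality gives
\[
d(y,w)\le d(y,x)+d(x,w)\le 2^{-j-1}r_Q+r_Q(1-2^{-j})=r_Q(1-2^{-j-1}),
\]
which proves the claim. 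Since $r_Q(1-2^{-j-1})<r_Q$, each $E_Q^j$ lies in $\overline{B(w,r_Q)}$, which is a closed subset of $\overline{Q}$. Therefore $\bigcup_{j\in\N_0}E_Q^j\subset \overline{Q}$, and taking closure yields $E_Q\subset \overline{Q}$, completing the proof.
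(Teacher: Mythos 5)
Your proof is correct. Note that the paper does not actually prove this lemma; it defers to the reference \cite{MR3673660}, so there is no in-paper argument to compare against. Your verification is the natural one: (a), (b) and (d) are immediate from the definition (with the small induction showing $E_Q^j\subset E$ for every $j$, which you correctly reuse in (d)), and (c) is handled by the geometric-series induction $E_Q^j\subset\{z: d(z,w)\le r_Q(1-2^{-j-1})\}$, which is exactly the point of choosing the radii $2^{-j-1}r_Q$. Your reading of ``$r$'' as ``$r_Q$'' is the intended one.

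One presentational remark: in the inductive step of (c) you state the hypothesis as $E_Q^{j-1}\subset\overline{B(w,r_Q(1-2^{-j}))}$ but what you actually derive for $y$ is the distance bound $d(y,w)\le r_Q(1-2^{-j-1})$, i.e.\ membership in the \emph{closed ball}, which in a general metric space can be strictly larger than the closure of the open ball. In a geodesic space the two coincide, so nothing is wrong; but it is cleaner (and removes any dependence on the geodesic assumption for this lemma) to run the induction directly on the statement ``$d(x,w)\le r_Q(1-2^{-j-1})$ for all $x\in E_Q^j$''. Since all you need at the end is $d(y,w)<r_Q$, hence $E_Q^j\subset Q\subset\overline{Q}$, the conclusion is unaffected.
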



The next lemma shows that  the truncated set $E_Q$ in Lemma \ref{l.truncation} really
contains big pieces of the original set $E$ at all small scales. By using these balls
we later  employ the capacity density condition of $E$, see the proof of Lemma \ref{l.b_poincare} for details.

\begin{lemma}\label{l.pallot}
Let $E$, $Q$, and $E_{Q}$ be as in Lemma~\ref{l.truncation}.
Suppose that  $m\in\N_0$ and $x\in X$ is such that
$d(x,E_{Q}) <2^{-m+1}r_Q$.
Then there exists
a ball $\widehat{B}=B(y_{x,m},2^{-m-1}r_Q)$ such that $y_{x,m}\in E$,
\begin{equation}\label{e.bubble} 
E\cap \overline{2^{-1}\widehat{B}}=E_Q\cap \overline{2^{-1}\widehat{B}}\,,
\end{equation}
and $\widehat{B}\subset B(x,2^{-m+2}r_Q)$.
\end{lemma}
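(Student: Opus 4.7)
My plan is to produce $y_{x,m}$ as a point of the layer $E_Q^m$ in the inductive construction of $E_Q$. The gain from being exactly at layer $m$ is that the very next step of the construction then ensures
\[
E\cap\overline{B(y_{x,m},2^{-m-2}r_Q)}\subset E_Q^{m+1}\subset E_Q,
\]
which is the nontrivial inclusion needed for the identity $E\cap\overline{2^{-1}\widehat B}=E_Q\cap\overline{2^{-1}\widehat B}$ (by the dilation convention, $2^{-1}\widehat B=B(y_{x,m},2^{-m-2}r_Q)$).

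Since $E_Q=\overline{\bigcup_{j\in\N_0}E_Q^j}$ and $d(x,E_Q)<2^{-m+1}r_Q$, a routine approximation yields some $j_0\in\N_0$ and some $z\in E_Q^{j_0}$ with $d(x,z)<2^{-m+1}r_Q$. If $j_0\le m$, Lemma~\ref{l.truncation}(d) gives $z\in E_Q^m$, so I set $y_{x,m}=z$. If $j_0>m$, I chain backwards through the inductive definition: for each such $j$ the membership $z_j\in E_Q^j$ produces some $z_{j-1}\in E_Q^{j-1}$ with $d(z_j,z_{j-1})\le 2^{-j-1}r_Q$, starting from $z_{j_0}=z$ and iterating down to $z_m\in E_Q^m$. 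A geometric series estimate gives
\[
d(z,z_m)\le \sum_{k=m+1}^{j_0}2^{-k-1}r_Q<2^{-m-1}r_Q,
\]
and I set $y_{x,m}=z_m\in E_Q^m\subset E$, where the last inclusion uses Lemma~\ref{l.truncation}(b).

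Verifying the two claims: for $\widehat B\subset B(x,2^{-m+2}r_Q)$, the triangle inequality with $d(y_{x,m},x)\le d(y_{x,m},z)+d(z,x)<2^{-m-1}r_Q+2^{-m+1}r_Q$ gives, for any $p\in\widehat B$,
\[
d(p,x)<2^{-m-1}r_Q+2^{-m-1}r_Q+2^{-m+1}r_Q=3\cdot 2^{-m}r_Q<2^{-m+2}r_Q.
\]
The identity $E\cap\overline{2^{-1}\widehat B}=E_Q\cap\overline{2^{-1}\widehat B}$ has $\supset$ immediate from $E_Q\subset E$, while $\subset$ is the observation from the first paragraph applied to $y_{x,m}\in E_Q^m$.

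The main delicate point is the indexing in the backward chain: we must land precisely at layer $m$, because for $y\in E_Q^j$ with $j>m$ the construction only guarantees the inclusion on a ball of radius $2^{-j-2}r_Q<2^{-m-2}r_Q$, which is too small. Landing at a shallower layer is handled painlessly by property (d) of Lemma~\ref{l.truncation}, and the geometric series is tight enough that the detour from $z$ to $z_m$ (at most $2^{-m-1}r_Q$) is absorbed inside the allowed containment in $B(x,2^{-m+2}r_Q)$.
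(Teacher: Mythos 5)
Your proof is correct. Note that the paper itself does not prove Lemma~\ref{l.pallot}; it defers to \cite{MR3673660}, and your argument --- approximate $x$ by a point of some layer $E_{Q}^{j_0}$, use Lemma~\ref{l.truncation}(d) if $j_0\le m$ or chain backwards through the inductive definition if $j_0>m$ to land exactly in $E_{Q}^{m}$, and then read off the inclusion $E\cap\overline{B(y_{x,m},2^{-m-2}r_Q)}\subset E_Q^{m+1}\subset E_Q$ from the very next construction step --- is precisely the standard argument for this truncation lemma; the geometric-series bound $d(z,z_m)<2^{-m-1}r_Q$ and the resulting containment $d(p,x)<3\cdot 2^{-m}r_Q<2^{-m+2}r_Q$ both check out.
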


We refer to \cite{MR3673660} for the proofs of Lemma \ref{l.truncation} and Lemma \ref{l.pallot}.
A similar truncation procedure is a standard
technique when proving the self-improvement of different capacity density conditions. It originally appears in~\cite[p. 180]{MR946438} for Riesz capacities in $\R^n$,
and later also in~\cite{MR1386213} for $\R^n$
and in~\cite{MR1869615} for general metric spaces.

With the aid of big pieces inside the truncated set $E_Q$, we can show that a localized variant of the boundary Poincar\'e inequality
in Theorem \ref{t.global_boundary_poincare}
holds for the truncated set $E_Q$, if $E$ satisfies
a capacity density condition.

\begin{lemma}\label{l.b_poincare}
Let $X$ be a geodesic space. 
Assume that $1\le p<\infty$ and $0<\beta\le 1$. Suppose that
a closed set $E\subset X$ satisfies the $(\beta,p)$-capacity density condition with a constant $c_0$.
Let $B_0=B(w,R)\subset X$ be a ball with $w\in E$ and $R<\diam(E)$,  and let $Q=B(w,r_Q)\subset B_0$ be the corresponding Whitney-type ball. Assume that
$B\subset Q^*$ is a ball with a center $x_B\in E_Q$. Then
there is a constant $K= K(p,c_\mu,c_0)$ such that
\begin{equation}\label{e.local_boundary_poincare}
\vint_{B} \lvert u(x)\rvert^p\,d\mu(x)\le K\diam(B)^{\beta p}\vint_B g(x)^p\,d\mu(x)
\end{equation}
for all $\beta$-H\"older functions $u$ in $X$ with $u=0$ in $E_Q$, and
for all $g\in\mathcal{D}_H^{\beta}(u)$. 
\end{lemma}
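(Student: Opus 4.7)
The plan is to apply Maz{\cprime}ya's inequality (Theorem~\ref{t.Mazya_Poincare}) on a carefully chosen auxiliary subball $\widehat{B}\subset B$, where the truncated set $E_Q$ genuinely looks like $E$, and then to transfer the resulting bound from $\widehat{B}$ back to $B$ by means of the $(\beta,p,p)$-Poincar\'e inequality of Theorem~\ref{t.pp_poincare}. To set up the auxiliary ball, write $B=B(x_B,r_B)$ with $x_B\in E_Q$; since $E_Q\subset\overline{Q}$ by Lemma~\ref{l.truncation} and $B\subset Q^*$, the radius $r_B$ is bounded by an absolute multiple of $r_Q$. The fact that $x_B\in E_Q$ gives $d(x_B,E_Q)=0<2^{-m+1}r_Q$ for every $m\in\N_0$, so Lemma~\ref{l.pallot} is applicable; I would take $m$ as the smallest nonnegative integer with $2^{-m+2}r_Q\le r_B/2$. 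With this choice the ball $\widehat{B}=B(y_{x_B,m},2^{-m-1}r_Q)$ lies inside $B(x_B,r_B/2)\subset B$, its radius $r_{\widehat{B}}=2^{-m-1}r_Q$ is comparable to $r_B$ with absolute constants, and hence $\mu(\widehat{B})$ and $\mu(B)$ are comparable by doubling.

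Next I would apply Maz{\cprime}ya's inequality on $\widehat{B}$ with center $y_{x_B,m}\in E$ and exploit the big-pieces identity $E\cap\overline{2^{-1}\widehat{B}}=E_Q\cap\overline{2^{-1}\widehat{B}}$ from Lemma~\ref{l.pallot}. Since $u=0$ on $E_Q$, monotonicity of $\cp_{\beta,p}$ in its first argument yields
\[
\cp_{\beta,p}\bigl(\{u=0\}\cap\overline{2^{-1}\widehat{B}},\widehat{B}\bigr)\ge \cp_{\beta,p}\bigl(E\cap\overline{2^{-1}\widehat{B}},\widehat{B}\bigr).
\]
The ball $2^{-1}\widehat{B}$ is centered in $E$, and its radius $r_{\widehat{B}}/2$ is safely below $\diam(E)/8$ because $r_B\le 4r_Q=R/32<\diam(E)$, so the $(\beta,p)$-capacity density condition on $E$ provides the lower bound $c_0(r_{\widehat{B}}/2)^{-\beta p}\mu(2^{-1}\widehat{B})$. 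Inserting this into Maz{\cprime}ya's inequality and using doubling to compare $\mu(\widehat{B})$ with $\mu(2^{-1}\widehat{B})$ yields
\[
\vint_{\widehat{B}}\lvert u(x)\rvert^p\,d\mu(x)\le C(p,c_0,c_\mu)\,r_B^{\beta p}\vint_{\widehat{B}}g(x)^p\,d\mu(x)\le C(p,c_0,c_\mu)\,r_B^{\beta p}\vint_B g(x)^p\,d\mu(x).
\]
In particular, $\lvert u_{\widehat{B}}\rvert^p\le \vint_{\widehat{B}}\lvert u\rvert^p\,d\mu$ is controlled by the same right-hand side.

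Finally I would propagate the bound from $\widehat{B}$ to $B$ via the triangle inequality $\lvert u\rvert\le\lvert u-u_B\rvert+\lvert u_B-u_{\widehat{B}}\rvert+\lvert u_{\widehat{B}}\rvert$, integrated over $B$ together with $(a+b+c)^p\le 3^{p-1}(a^p+b^p+c^p)$. The term $\vint_B\lvert u-u_B\rvert^p\,d\mu$ is handled directly by the $(\beta,p,p)$-Poincar\'e inequality; the difference $\lvert u_B-u_{\widehat{B}}\rvert^p$ is controlled by Jensen, H\"older and $\mu(B)\le C(c_\mu)\mu(\widehat{B})$, reducing it once more to that same Poincar\'e inequality; and $\lvert u_{\widehat{B}}\rvert^p$ is exactly what was bounded in the previous step. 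Collecting these three contributions produces~\eqref{e.local_boundary_poincare} with a constant $K=K(p,c_\mu,c_0)$. The main conceptual obstacle is the passage from $E_Q$ to $E$: the capacity density assumption produces lower bounds only for balls centered in $E$, whereas $x_B$ only lies in $E_Q$, and Lemma~\ref{l.pallot} resolves this by furnishing the auxiliary ball $\widehat{B}$ on which $E_Q$ and $E$ coincide on the relevant half-ball.
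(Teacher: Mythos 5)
Your proposal is correct and follows essentially the same route as the paper's proof: choose $m$ so that the ball $\widehat{B}$ from Lemma~\ref{l.pallot} sits inside $B$ with comparable radius, use the big-pieces identity together with monotonicity of capacity and the capacity density condition to feed Maz\cprime ya's inequality on $\widehat{B}$, and then transfer to $B$ via the three-term triangle inequality and the $(\beta,p,p)$-Poincar\'e inequality. The only differences are cosmetic (a slightly different but equivalent normalization of $m$, and the harmless bound $r_B\le 8r_Q$ rather than $4r_Q$).
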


\begin{proof}
Fix a ball $B=B(x_B,r_B)\subset Q^*$ with $x_B\in E_Q$.
Recall that
 $r_Q=R/128$ as in \eqref{e.q-radius}.
 Since $B\subset Q^*\subsetneq X$, we have
\[
0<r_B\le \diam(B)\le \diam(Q^*)\le 8r_Q\,.
\] 
Hence, we can choose $m\in\N_0$ such that
$2^{-m+2}r_Q<r_B\le 2^{-m+3}r_Q$. Then
\[
d(x_B,E_Q)=0 < 2^{-m+1}r_Q\,.
\]
By Lemma \ref{l.pallot} with  $x=x_B$  there exists a ball  $\widehat{B}=B(y,2^{-m-1}r_Q)$ 
such that  $y\in E$, 
\begin{equation}\label{e.trunc}
E\cap \overline{2^{-1}\widehat{B}}=E_Q\cap \overline{2^{-1}\widehat{B}}
\end{equation}
and $\widehat{B}\subset B(x_B,2^{-m+2}r_Q)\subset B(x_B,r_B)=B$.  Observe also
that $B\subset 32\widehat{B}$. 

 Fix a $\beta$-H\"older function $u$ in $X$ with $u=0$ in $E_Q$, and let
 $g\in\mathcal{D}_H^{\beta}(u)$.
We estimate
\begin{align*}
\vint_{B} \lvert u(x)\rvert^p\,d\mu(x)
\le C(p)\vint_{B} \lvert u(x)-u_{B}\rvert^p\,d\mu(x) + C(p)\lvert u_{B}-u_{\widehat{B}}\rvert^p+C(p)\lvert u_{\widehat{B}}\rvert^p\,.
\end{align*}
By the $(\beta,p,p)$-Poincar\'e inequality in Theorem \ref{t.pp_poincare},  we obtain
\[
\vint_{B} \lvert u(x)-u_{B}\rvert^p\,d\mu(x)\le C(p)\diam(B)^{\beta p}\vint_B g(x)^p\,d\mu(x)\,.
\]
Using also H\"older's inequality and the doubling condition, we get
\begin{align*}
\lvert u_{B}-u_{\widehat{B}}\rvert^p&\le \vint_{\widehat{B}} \lvert u(x)-u_{B}\rvert^p\,d\mu(x)
\\&\le C(c_\mu)\vint_{B} \lvert u(x)-u_{B}\rvert^p\,d\mu(x)
\le C(p,c_\mu)\diam(B)^{\beta p}\vint_{B} g(x)^p\, d\mu(x)\,.
\end{align*}
 In order to estimate
 the remaining term $\lvert u_{\widehat{B}}\rvert^p$, we write
$ \{u=0\}= \{y\in X :  u(y)=0\}\supset E_Q$.
By using the monotonicity of capacity,
identity \eqref{e.trunc}, the assumed capacity density condition, and the doubling condition, we obtain
\begin{align*}
{\cp_{\beta,p}(\{u=0\}\cap \overline{2^{-1}\widehat{B}},\widehat{B})}
&\ge {\cp_{\beta,p}(E_Q\cap \overline{2^{-1}\widehat{B}},\widehat{B})}
={\cp_{\beta,p}(E\cap \overline{2^{-1}\widehat{B}},\widehat{B})}
\\&\ge c_0 (2^{-m-2}r_Q)^{-\beta p}\mu(2^{-1}\widehat{B})\ge C(c_\mu,c_0) r_B^{-\beta p} \mu(B)\,.
\end{align*}
By Theorem~\ref{t.Mazya_Poincare}, we obtain
\begin{align*}
\lvert u_{\widehat{B}}\rvert^p&\le \vint_{\widehat{B}} \lvert u(x)\rvert^{p}\,d\mu(x)
\le C(p)\left({\cp_{\beta,p}(\{u=0\}\cap \overline{2^{-1}\widehat{B}},\widehat{B})}\right)^{-1}\int_{\widehat{B}} g(x)^p\,d\mu(x)\\
&\le C(p,c_\mu,c_0)\frac{r_B^{\beta p}}{\mu(B)} \int_{\widehat{B}} g(x)^p\,d\mu(x)\le C(p,c_\mu,c_0)\diam(B)^{\beta p} \vint_{B} g(x)^p\,d\mu(x)\,.
\end{align*}
The proof is completed by combining the above estimates for the three terms.
\end{proof}

\section{ Maximal boundary Poincar\'e inequalities}\label{s.main}

We formulate and prove our key results, Theorem~\ref{t.main_boundedness} and Theorem~\ref{t.main_local}. These theorems give 
improved variants of the local boundary Poincar\'e inequality \eqref{e.local_boundary_poincare}. The improved variants
are 
norm inequalities for a combination of two maximal functions.
Hence, we can view Theorem~\ref{t.main_boundedness} and Theorem~\ref{t.main_local} as maximal boundary Poincar\'e inequalities.
Our treatment adapts \cite{MR3895752} to the setting of boundary Poincar\'e inequalities.

\begin{definition}
Let $X$ be a geodesic space, $1<p<\infty$ and $0<\beta\le 1$.
If $\mathcal{B}\not=\emptyset$ is a given family
of balls in $X$, then we define a fractional sharp maximal function 
\begin{equation}\label{d.m_def}
M^{\sharp,p}_{\beta,\mathcal{B}}u(x)=\sup_{x\in B\in \mathcal{B}} \bigg(\frac{1}{\diam(B)^{\beta p}}\vint_B \lvert u(y)-u_B\rvert^p\,d\mu(y)\bigg)^{1/p}\,,\qquad x\in X\,,
\end{equation}
whenever $u\colon X\to \R$ is a $\beta$-H\"older function. 
We also define the maximal function adapted to a given set $E_Q\subset X$ by
\begin{equation}\label{d.m_e_def}
M^{E_Q,p}_{\beta,\mathcal{B}}u(x)=\sup_{x\in B\in \mathcal{B}} \bigg(\frac{\mathbf{1}_{E_Q}(x_B)}{\diam(B)^{\beta p}}\vint_B \lvert u(y)\rvert^p\,d\mu(y)\bigg)^{1/p}\,,\qquad x\in X\,,
\end{equation} 
whenever $u\colon X\to \R$ is a $\beta$-H\"older function such that $u=0$ in $E_Q$.
Here $x_B$ is the center of the ball $B\in\mathcal{B}$. 
The supremums in \eqref{d.m_def} and \eqref{d.m_e_def} are defined to be zero, if there is no ball $B$ in $\mathcal{B}$ that contains the point $x$.
\end{definition}

We are mostly interested in 
maximal functions for the ball family
\eqref{e.B_0}. The following is the main result in this section.

\begin{theorem}\label{t.main_boundedness}
Let $X$ be a geodesic space. Let
$1<p<\infty$ and $0<\beta\le 1$. Let $E\subset X$ be a closed
set which satisfies the $(\beta,p)$-capacity density condition with a constant $c_0$.
Let $B_0=B(w,R)$ be a ball with $w\in E$ and $R<\diam(E)$.
Let  $E_Q$ be the truncation
of $E$ to the Whitney-type ball $Q$ as in Section \ref{s.truncation}.
Then
there exists 
a constant $C=C(\beta,p,c_\mu,c_0)>0$ such that
 inequality
\begin{equation}\label{e.loc_des_C}
\begin{split}
\int_{B_0}\big(M^{\sharp,p}_{\beta,\mathcal{B}_0}u+M^{E_Q,p}_{\beta,\mathcal{B}_0}u \big)^{p}\,d\mu
\le 
C\int_{B_0} g^{p}\,d\mu
\end{split}
\end{equation}
holds whenever $u\in\mathrm{Lip}_\beta(X)$ is such that $u=0$ in $E_Q$ and $g\in\mathcal{D}_H^{\beta}(u)$.
\end{theorem}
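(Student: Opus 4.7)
The strategy is to dominate both maximal functions pointwise $\mu$-a.e.\ on $B_0$ by a fractional Hardy--Littlewood-type maximal function of $g$ taken at a lower integrability exponent $q \in (1,p)$, and then invoke the standard $L^p$-boundedness of that maximal operator on the doubling space.

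For $M^{\sharp,p}_{\beta,\mathcal{B}_0} u$, Theorem~\ref{t.qp_poincare} supplies an exponent $q_0 = q_0(\beta,p,c_\mu) \in (1,p)$ for which every ball $B \in \mathcal{B}_0$ enjoys the $(\beta,p,q_0)$-Poincar\'e inequality. This immediately yields
\[
M^{\sharp,p}_{\beta,\mathcal{B}_0} u(x) \le C\, \sup_{x \in B \in \mathcal{B}_0}\bigg(\vint_B g(y)^{q_0}\,d\mu(y)\bigg)^{1/q_0}
\]
for every $x \in B_0$. Since $q_0 < p$, the maximal operator on the right-hand side is bounded on $L^p(B_0)$, so integrating the $p$-th power of both sides controls the first piece of \eqref{e.loc_des_C}.

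The harder task is the analogous bound for the boundary maximal function $M^{E_Q,p}_{\beta,\mathcal{B}_0} u$. The plan is to upgrade Lemma~\ref{l.b_poincare} to a boundary $(\beta,p,q_1)$-inequality for some $q_1 = q_1(\beta, p, c_\mu, c_0) \in (1,p)$ asserting that
\[
\bigg(\vint_B |u|^p\,d\mu\bigg)^{1/p} \le C\, \diam(B)^\beta \bigg(\vint_B g^{q_1}\,d\mu\bigg)^{1/q_1}
\]
for every ball $B \subset Q^*$ with center $x_B \in E_Q$. The self-improvement is obtained by adapting the Keith--Zhong covering machinery from \cite{MR3895752} that powered Theorem~\ref{t.qp_poincare}, applied to the initial $(\beta,p,p)$-input provided by Lemma~\ref{l.b_poincare}. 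For balls $B \in \mathcal{B}_0$ centered in $E_Q$ but with $B \not\subset Q^*$, property (W1) forces $\diam(B) \ge 3r_Q/4$, so up to doubling factors one recovers a bound of the same form from the global boundary Poincar\'e inequality (Theorem~\ref{t.global_boundary_poincare}), which applies because $E_Q \subset E$. Taking $q = \max(q_0, q_1)$ and combining, we arrive at
\[
M^{E_Q,p}_{\beta,\mathcal{B}_0} u(x) \le C\, \sup_{x \in B \in \mathcal{B}_0}\bigg(\vint_B g(y)^{q}\,d\mu(y)\bigg)^{1/q}
\]
$\mu$-a.e.\ on $B_0$, and a final application of the $L^p$-boundedness of the fractional maximal operator completes the argument.

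The main obstacle is the boundary self-improvement step. Unlike the interior Poincar\'e inequality, where the cancellation term $u_B$ sits on the left-hand side, the boundary variant involves $u$ itself, so the Keith--Zhong bootstrap must respect the vanishing condition $u|_{E_Q} = 0$. The truncation construction of Section~\ref{s.truncation}, and in particular Lemma~\ref{l.pallot} which produces ``big pieces'' of $E$ inside $E_Q$ at every scale, is precisely what allows the capacity density hypothesis to be deployed locally through Maz{\cprime}ya's inequality within each step of the iteration.
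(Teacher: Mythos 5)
Your treatment of the interior term $M^{\sharp,p}_{\beta,\mathcal{B}_0}u$ is correct and coincides with the first half of Lemma~\ref{l.big_t_o_small_ball}. The problem is the boundary term: the step you describe as ``upgrade Lemma~\ref{l.b_poincare} to a boundary $(\beta,p,q_1)$-inequality by adapting the Keith--Zhong covering machinery'' is not a routine adaptation --- it is essentially the entire content of the theorem, and you have not supplied it. The argument behind Theorem~\ref{t.qp_poincare} (i.e.\ \cite[Theorem 3.6]{MR3895752}) operates on oscillations $\lvert u-u_B\rvert$ and exploits the fact that a $(\beta,q,q)$-Poincar\'e inequality is available for free for every $q<p$; neither feature is present for the boundary inequality, where the left-hand side is $\lvert u\rvert$ itself, the admissible balls must remain centered in $E_Q$, and the only input is the single $(\beta,p,p)$-estimate of Lemma~\ref{l.b_poincare}. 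The paper in fact never establishes the intermediate $(\beta,p,q_1)$-inequality you posit. Instead it proves the maximal-function norm inequality \eqref{e.loc_des_C} directly: Theorem~\ref{t.main_local} yields, via the stopping-time decomposition of the level sets of $M^{\sharp}u+M^{E_Q}u$ in Section~\ref{s.stopping}, the absorption estimate of Lemma~\ref{l.dyadic}, and the key Lemma~\ref{l.mainl_local} (which uses McShane extensions of $u$ restricted to $E_Q\cup(B\setminus U^{2^i\lambda})$, the nonlocal glueing Lemma~\ref{l.glueing}, and a single application of Lemma~\ref{l.b_poincare}), a good-$\lambda$-type inequality with gain factors $2^{-k\alpha}$ and $k^{-(p-1)}$; taking $\varepsilon=0$ and $k$ large then absorbs the maximal function into the left-hand side. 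Without carrying out some version of this scheme, your proposal restates the difficulty rather than resolving it.

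A secondary error: for balls $B\in\mathcal{B}_0$ centered in $E_Q$ with $B\not\subset Q^*$ you invoke Theorem~\ref{t.global_boundary_poincare}, claiming it ``applies because $E_Q\subset E$''. It does not: that theorem requires $u=0$ on all of $E$, whereas your $u$ vanishes only on the smaller set $E_Q$, so the inclusion goes the wrong way. This case is repairable --- for such $B$ property (W1) gives $\diam(B)\gtrsim R$ and $\mu(B)\gtrsim\mu(B_0)$, and one can split $\vint_{B_0}\lvert u\rvert^p$ into oscillation terms plus $\lvert u_{Q^*}\rvert^p$ and control the latter through the localized maximal function, as the paper does in Lemma~\ref{l.big_t_o_small_ball} --- but as written the justification is invalid.
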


\begin{proof}
We use the following Theorem \ref{t.main_local} with $\varepsilon=0$.
Observe that the first term on the right-hand side of~\eqref{e.loc_des}
is finite, since $u$ is  a $\beta$-H\"older function in $X$ such that $u=0$ in $E_Q$. Inequality~\eqref{e.loc_des_C} is obtained when this term is absorbed to the left-hand side after choosing the number
 $k$ large enough, depending only on $\beta$, $p$, $c_\mu$ and $c_0$.
\end{proof}

\begin{theorem}\label{t.main_local}
Let $X$ be a geodesic space. Let 
$1<q<p<\infty$ and $0<\beta\le 1$  be such that the
$(\beta,p,q)$-Poincar\'e inequality in Theorem \ref{t.qp_poincare} holds.  Let $E\subset X$ be a closed
set satisfying the $(\beta,p)$-capacity density condition with a constant $c_0$.
Let $B_0=B(w,R)$ be a ball with $w\in E$ and $R<\diam(E)$.
Let  $E_Q$ be the truncation
of $E$ to the Whitney-type ball $Q=B(w,r_Q)\subset B_0$ as in Section \ref{s.truncation}.
Let $K=K(p,c_\mu,c_0)>0$ be the constant for the local boundary Poincar\'e inequality 
in Lemma~\ref{l.b_poincare}.
Assume that $k\in \N$,  $0\le \varepsilon< (p-q)/2$,
 and $\alpha=\beta p^2/(2(s+\beta p))>0$ with $s=\log_2 c_\mu$. 
Then inequality
\begin{equation}\label{e.loc_des}
\begin{split}
&\int_{B_0}\big(M^{\sharp,p}_{\beta,\mathcal{B}_0}u+M^{E_Q,p}_{\beta,\mathcal{B}_0}u \big)^{p-\varepsilon}\,d\mu
\le  C_1\ed \bigg(2^{k(\varepsilon-\alpha)}+\frac{K 4^{k\varepsilon}}{k^{p-1}}\bigg)\int_{B_0} \big( M^{\sharp,p}_{\beta,\mathcal{B}_0}u+M^{E_Q,p}_{\beta,\mathcal{B}_0}u \big)^{p-\varepsilon}\,d\mu
\\&\quad+ C_1 \ed C(k,\varepsilon) K \int_{B_0\setminus \{M^{\sharp,p}_{\beta,\mathcal{B}_0}u+M^{E_Q,p}_{\beta,\mathcal{B}_0}u =0\}} g^p\big(M^{\sharp,p}_{\beta,\mathcal{B}_0}u+M^{E_Q,p}_{\beta,\mathcal{B}_0}u \big)^{-\varepsilon}\,d\mu + C_3\ed \int_{B_0} g^{p-\varepsilon}\,d\mu\ed
\end{split}
\end{equation}
holds for each $u\in\mathrm{Lip}_\beta(X)$ 
with $u=0$ in $E_Q$ 
and every $g\in\mathcal{D}_H^{\beta}(u)$. Here $C_1=C_1(\beta,p,c_\mu)$, $C_1=C_1(\beta,p,c_\mu)$,
$ C_3=C(\beta,p,c_\mu)\ed$, $C(k,\varepsilon)=(4^{k\varepsilon}-1)/\varepsilon$ if 
$\varepsilon>0$ and $C(k,0)=k$.
\end{theorem}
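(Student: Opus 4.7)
The plan is to adapt the Keith--Zhong chaining technique from \cite{MR3895752} to the present boundary-Poincar\'e setting. The new ingredient is the maximal function $M^{E_Q,p}_{\beta,\mathcal{B}_0}u$, which will be controlled by the local boundary Poincar\'e inequality of Lemma~\ref{l.b_poincare}. The goal is a pointwise estimate for $M^{\sharp,p}_{\beta,\mathcal{B}_0}u+M^{E_Q,p}_{\beta,\mathcal{B}_0}u$ that, integrated against the weight $(M^{\sharp,p}_{\beta,\mathcal{B}_0}u+M^{E_Q,p}_{\beta,\mathcal{B}_0}u)^{-\varepsilon}$ over $B_0$, reproduces the three-term right-hand side of~\eqref{e.loc_des}.

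Fix $x\in B_0$ and, after choosing a near-maximizing ball $B_x\in\mathcal{B}_0$ containing $x$ for $(M^{\sharp,p}_{\beta,\mathcal{B}_0}u+M^{E_Q,p}_{\beta,\mathcal{B}_0}u)(x)$, I would construct a nested chain $B_x=B^0_x\supset B^1_x\supset\cdots\supset B^k_x$ with $\diam(B^j_x)\approx 2^{-j}\diam(B_x)$ and centers lying on a geodesic joining $x$ to the center of $B_x$; the geodesic assumption on $X$ together with properties (W2)--(W4) of $Q^*$ keeps every $B^j_x$ in $\mathcal{B}_0$. Telescoping $u_{B^0_x}-u_{B^k_x}$ along the chain and invoking at each level the $(\beta,p,q)$-Poincar\'e inequality of Theorem~\ref{t.qp_poincare}, which is strictly stronger than the diagonal one because $q<p$, will produce the Keith--Zhong decay factor $2^{-k\alpha}$ with $\alpha=\beta p^2/(2(s+\beta p))$: this exponent is precisely the one coming from the fractional maximal function comparison in \cite{MR3895752}, where $s$ is the doubling exponent and $\beta p$ the intrinsic scaling.

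Two scenarios must be distinguished along the chain. If at some level $j$ the center $x_{B^j_x}$ lies in $E_Q$, the ball qualifies as a \emph{stopping ball}: I invoke Lemma~\ref{l.b_poincare} to bound $\vint_{B^j_x}\lvert u\rvert^p\,d\mu$ by $K\diam(B^j_x)^{\beta p}\vint_{B^j_x}g^p\,d\mu$, which is the source of the constant $K$ on the right-hand side of~\eqref{e.loc_des}. If no stopping occurs up to depth $k$, the chain reaches the deepest scale and the geometric decay $2^{-k\alpha}$ controls the contribution at $x$ by $M^{\sharp,p}_{\beta,\mathcal{B}_0}u+M^{E_Q,p}_{\beta,\mathcal{B}_0}u$ at the deepest scale; raising to the $(p-\varepsilon)$-th power and integrating yields the self-referential coefficient $2^{k(\varepsilon-\alpha)}$. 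The stopping-level contributions at the various $0\le j\le k$ are aggregated by a discrete Minkowski/Jensen inequality in $j$, producing the gain $1/k^{p-1}$, while the loss $4^{k\varepsilon}$ appears when the scaling $\diam(B^j_x)^{\beta(p-\varepsilon)}$ is compared with $\diam(B_x)^{\beta(p-\varepsilon)}$ at the deepest levels.

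The final integration over $B_0$ splits the non-terminal Poincar\'e contributions into a piece of the form $g^p\cdot(M^{\sharp,p}_{\beta,\mathcal{B}_0}u+M^{E_Q,p}_{\beta,\mathcal{B}_0}u)^{-\varepsilon}$, giving the middle term with the constant $C(k,\varepsilon)=(4^{k\varepsilon}-1)/\varepsilon$ obtained from a $k$-step geometric series (reducing to $k$ when $\varepsilon=0$), together with a residual $g^{p-\varepsilon}$ piece controlled by H\"older using $\varepsilon<(p-q)/2$. The hard part will be the clean bookkeeping of the case analysis across the two maximal functions and across the $k$ chain levels, complicated by the nonlocality of Haj\l{}asz gradients: since the restriction of $g$ to a subball is not a gradient of the restriction of $u$, every Poincar\'e and every boundary-Poincar\'e estimate in the chain must be applied with the full global gradient $g\in\mathcal{D}_H^\beta(u)$, and one has to verify that the constants combine into exactly the announced $2^{k(\varepsilon-\alpha)}+K4^{k\varepsilon}/k^{p-1}$ structure with no cross-contamination between the three terms on the right-hand side of~\eqref{e.loc_des}.
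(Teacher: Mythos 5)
Your proposal correctly identifies several ingredients (the localization to $Q^*$, the role of Lemma~\ref{l.b_poincare} as the source of $K$, the use of Theorem~\ref{t.qp_poincare} with $q<p$, and the nonlocality obstruction for Haj{\l}asz gradients), but the architecture you describe is not the one that actually produces the right-hand side of~\eqref{e.loc_des}, and the two decisive mechanisms are missing. First, the factor $2^{k(\varepsilon-\alpha)}$ does not come from telescoping the $(\beta,p,q)$-Poincar\'e inequality along a nested chain of $k$ balls; such a telescoping only yields a convergent geometric series in the scale and gives no relation between the level sets of the maximal function at heights $\lambda$ and $2^k\lambda$. In the paper the proof is a level-set (good-$\lambda$) argument: one runs a stopping-time construction on $Q^\lambda=\{M^{E_Q}_{Q}u>\lambda\}$ producing disjoint stopping balls $B$ with $\vint_B|u|^p\,d\mu\approx\lambda^p\diam(B)^{\beta p}$, and then (Lemma~\ref{l.dyadic}) covers $U_B^{2^k\lambda}$ by balls $B'$ chosen so that exactly half of $\mu(B\cap B')$ lies outside $U^{2^k\lambda}$; comparing $\diam(5B')$ with $\diam(B)$ via the measure lower bound \eqref{e.radius_measure} is what forces $(\diam(5B')/\diam(B))^{\beta p}\lesssim 2^{-k\alpha}$ with $\alpha=\beta p^2/(2(s+\beta p))$. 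Second, and more seriously, the gain $1/k^{p-1}$ cannot be obtained by ``discrete Minkowski/Jensen in $j$'' over stopping levels of a chain. It comes from the Keith--Zhong averaging device: on each stopping ball one forms $h=\tfrac1k\sum_{i=k}^{2k-1}u_{2^i\lambda}$, where $u_{2^i\lambda}$ is the McShane extension of $u|_{E_Q\cup(B\setminus U^{2^i\lambda})}$ (whose H\"older constant $\kappa_i\approx 2^i\lambda$ is supplied by Lemma~\ref{l.arm_local}); the nonlocal glueing Lemma~\ref{l.glueing} gives $g_h=\tfrac1k\sum_i(\kappa_i\mathbf{1}_{(X\setminus B)\cup U_B^{2^i\lambda}}+g\mathbf{1}_{B\setminus U^{2^i\lambda}})$, and the nesting $U_B^{2^k\lambda}\supset\dotsb\supset U_B^{2^{2k-1}\lambda}$ turns $g_h^p$ into $k^{-p}\sum_j(2^j\lambda)^p\mathbf{1}_{U_B^{2^j\lambda}}+g^p\mathbf{1}_{B\setminus U^{4^k\lambda}}$. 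Applying Lemma~\ref{l.b_poincare} to $h$ (not to $u$ on subballs) is the single place the capacity density condition enters, and it is exactly this step that yields the term $\tfrac{K}{k^{p}}\sum_{j=k}^{2k-1}(2^j\lambda)^p\mu(U_B^{2^j\lambda})$, which after the Cavalieri integration $\int_0^\infty\lambda^{p-1-\varepsilon}\mu(Q^\lambda)\,d\lambda$ becomes $K4^{k\varepsilon}k^{1-p}$ times the left-hand side. Without this averaged extension your scheme has no source for the $1/k^{p-1}$ decay, and the absorption in Theorem~\ref{t.main_boundedness} would fail.

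A further point: your plan applies the boundary Poincar\'e inequality whenever a chain ball happens to be centered in $E_Q$, but in the paper every stopping ball is centered in $E_Q$ by construction (the stopping family lives inside $\mathcal{B}_Q$ with centers $x_B\in E_Q$, and parents are again centered in $E_Q$), so no such dichotomy along a chain occurs. The genuine case distinction is instead whether $\mu(U_B^{2^k\lambda})\ge\mu(B)/2$ or not, which decides whether the stopping ball contributes to the $2^{k(\varepsilon-\alpha)}$ term trivially or must be handled through Lemma~\ref{l.dyadic} and the extension argument. Finally, the middle term of \eqref{e.loc_des} with the weight $(M^{\sharp,p}_{\beta,\mathcal{B}_0}u+M^{E_Q,p}_{\beta,\mathcal{B}_0}u)^{-\varepsilon}$ and the constant $C(k,\varepsilon)=(4^{k\varepsilon}-1)/\varepsilon$ arises from Fubini applied to $\int_0^\infty\lambda^{-1-\varepsilon}\int_{U^\lambda\setminus U^{4^k\lambda}}g^p\,d\mu\,d\lambda$, i.e.\ from integrating the indicator $\mathbf{1}_{U^\lambda\setminus U^{4^k\lambda}}(x)$ in $\lambda$ between $M u(x)/4^k$ and $Mu(x)$ --- not from a ``$k$-step geometric series'' in a pointwise estimate. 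I recommend reorganizing the proof around the stopping construction on level sets and the averaged McShane extensions; the chaining you describe can at best reproduce Lemma~\ref{l.arm_local}, which is a preparatory step.
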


 \begin{remark}
Observe that Theorem \ref{t.main_local} implies a variant of Theorem \ref{t.main_boundedness}
when we choose $\varepsilon>0$ to be sufficiently small. We omit the formulation
of this variant, since we do not use it. This is because of the following defect:
one of the terms is the integral of 
$g^p\big(M^{\sharp,p}_{\beta,\mathcal{B}_0}u+M^{E_Q,p}_{\beta,\mathcal{B}_0}u \big)^{-\varepsilon}$
instead of $g^{p-\varepsilon}$.
Because of its independent interest,
we have however chosen to formulate Theorem~\ref{t.main_local} such that
it  incorporates the parameter
$\varepsilon$.
\end{remark}

The proof of Theorem \ref{t.main_local} is completed in Section \ref{ss.main_local}. 
For the proof, we need preparations
that are treated in Sections \ref{ss.fs} -- \ref{ss.auxiliary_local}.
At this stage, we already fix $X$, $E$, $B_0$, $Q$, $E_Q$, $K$,  $\mathcal{B}_0$, $p$, $\beta$, $q$, $\varepsilon$, $k$ and $u$ as
in the statement of Theorem \ref{t.main_local}.
Notice, however, that the $\beta$-Haj{\l}asz gradient  $g$ of $u$ is not yet fixed.  We abbreviate
$M^{\sharp} u=M^{\sharp,p}_{\beta,\mathcal{B}_0}u$ and $M^{E_Q}u=M^{E_Q,p}_{\beta,\mathcal{B}_0}u$, 
and denote \[U^{\lambda}=\{x\in B_0\,:\, M^{\sharp} u(x)+M^{E_Q}u(x) >\lambda\}\,,\qquad \lambda>0\,.\]
The sets $U^\lambda$ are open in $X$. 
If $F\subset X$ is a Borel set and $\lambda>0$, we write $U^\lambda_F=U^\lambda \cap F$. 
We refer to these objects throughout 
Section \ref{s.main}
without further notice.    

\subsection{Localization to Whitney-type ball}\label{ss.fs}

We need a  smaller maximal function that is localized to the Whitney-type ball $Q$.
Consider the ball family
\[
\mathcal{B}_{Q}=\{B\subset X\,:\,  B  \text{ is a ball such that }B\subset Q^*\}
\]
and define 
\begin{equation}\label{e.m_loc_q}
M^{E_Q}_{Q} u=\mathbf{1}_{Q^*}M^{E_Q,p}_{\beta,\mathcal{B}_{Q}} u\,.
\end{equation}
If $\lambda>0$, we write \begin{equation}\label{e.super}
Q^\lambda = \{x\in Q^*\,:\, M^{E_Q}_{Q} u(x)>\lambda\}\,.
\end{equation}
We estimate 
the left-hand side of \eqref{e.loc_des_C} 
in terms of \eqref{e.m_loc_q} with the aid of the  following norm estimate.
We will later be able to estimate the smaller maximal function \eqref{e.m_loc_q}.

\begin{lemma}\label{l.big_t_o_small_ball}
There are constants $C_1=C(p,c_\mu)$ and $C_2=C(\beta,p,c_\mu)$
such that 
\begin{align*}
&\int_{B_0} \big(M^{\sharp} u(x)+M^{E_Q}u(x)\big)^{p-\varepsilon}\,d\mu(x)\\
&\qquad \le C_1\int_{B_0} \big(M^{E_Q}_{Q} u(x)\big)^{p-\varepsilon}\,d\mu(x) + C_2 \int_{B_0} g(x)^{p-\varepsilon}\,d\mu(x)
\end{align*}
for all $g\in\mathcal{D}_H^{\beta}(u)$. 
\end{lemma}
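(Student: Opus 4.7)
The plan is to decompose $M^\sharp u + M^{E_Q}u$ pointwise into $M^{E_Q}_Q u$ plus a remainder controlled by a maximal function of $g$, then integrate. For $M^\sharp u$, I would apply the self-improved $(\beta,p,q)$-Poincar\'e inequality (Theorem~\ref{t.qp_poincare}) with $1<q<p-\varepsilon$, which is available once $\varepsilon$ is small enough. This yields $M^\sharp u(x)\le CM^{\mathcal{B}_0}_q g(x)$, where $M^{\mathcal{B}_0}_q g(x)=\sup_{x\in B\in\mathcal{B}_0}(\vint_B g^q\,d\mu)^{1/q}$; since $q<p-\varepsilon$, the standard Hardy--Littlewood theorem in doubling spaces gives the boundedness of $M^{\mathcal{B}_0}_q$ on $L^{p-\varepsilon}(B_0)$, accounting for the contribution of this term.

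For $M^{E_Q}u(x)$ I would split the defining supremum according to whether the admissible ball $B$ is contained in $Q^*$. If $B\subset Q^*$, then $x\in Q^*$ and the contribution is pointwise $\le M^{E_Q}_Qu(x)$. If $B\not\subset Q^*$, property~(W1) forces $\diam(B)\ge 3r_Q/4$; since $x_B\in E_Q\subset\overline{Q}$ while $B\not\subset Q^*=B(w,4r_Q)$, one actually has $r_B>3r_Q$. For such ``large'' $B$ I would introduce the auxiliary ball $B^\dagger=B(x_B,3r_Q)$. Because $d(x_B,w)\le r_Q$, the ball $B^\dagger$ lies in $Q^*$, is contained in $B$, and contains $B(w,2r_Q)$; moreover $x_{B^\dagger}=x_B\in E_Q$, so $B^\dagger$ is admissible for $M^{E_Q}_Q u$. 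The local boundary Poincar\'e inequality (Lemma~\ref{l.b_poincare}) applied on $B^\dagger$ gives $\vint_{B^\dagger}|u|^p\le Cr_Q^{\beta p}\vint_{B^\dagger}g^p$, and chaining $u_B$ to $u_{B^\dagger}$ through the $(\beta,p,q)$-Poincar\'e inequality on $B$ (legal because the ratio $r_B/r_{B^\dagger}\le 127/3$ is bounded only in terms of $c_\mu$) yields $|u_B-u_{B^\dagger}|^p\le C\diam(B)^{\beta p}(\vint_B g^q)^{p/q}$. Combined with $\vint_B|u|^p\le C(\vint_B|u-u_B|^p+|u_B|^p)$ and $r_Q\lesssim\diam(B)$ this gives the pointwise estimate
\[
\Bigl(\frac{\vint_B|u|^p}{\diam(B)^{\beta p}}\Bigr)^{1/p}\le CM^{\mathcal{B}_0}_q g(x)+C\,\frac{|u_{B^\dagger}|}{r_Q^\beta}.
\]

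To dispose of the second summand uniformly in $B$, I use that $B^\dagger\in\mathcal{B}_Q$ is itself a test ball in the supremum defining $M^{E_Q}_Q u(y)$ for every $y\in B^\dagger$, so $|u_{B^\dagger}|/r_Q^\beta\le CM^{E_Q}_Q u(y)$ for every $y\in B^\dagger$, and in particular for every $y\in B(w,2r_Q)$. Taking the supremum over admissible $B$ in both cases and combining with Step~1 yields the pointwise bound
\[
M^\sharp u(x)+M^{E_Q}u(x)\le M^{E_Q}_Qu(x)+CM^{\mathcal{B}_0}_q g(x)+CM^{E_Q}_Qu(y)\quad\text{for every }y\in B(w,2r_Q).
\]
Raising to the $(p-\varepsilon)$-th power, integrating over $x\in B_0$, taking infima in $y$ via $\inf_{y\in B(w,2r_Q)}(M^{E_Q}_Qu(y))^{p-\varepsilon}\le \vint_{B(w,2r_Q)}(M^{E_Q}_Qu)^{p-\varepsilon}$, and using the doubling-controlled ratio $\mu(B_0)/\mu(B(w,2r_Q))\le C(c_\mu)$ absorbs the last term into $C_1\int_{B_0}(M^{E_Q}_Qu)^{p-\varepsilon}\,d\mu$, while the second term integrates to $C_2\int_{B_0}g^{p-\varepsilon}\,d\mu$ by the Hardy--Littlewood boundedness of $M^{\mathcal{B}_0}_q$.

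\textbf{Main obstacle.} The subtle point is handling the $g^p$ that Lemma~\ref{l.b_poincare} unavoidably produces: a direct attempt to control $(\vint_{B^\dagger}g^p)^{1/p}$ in $L^{p-\varepsilon}$ fails because $(p-\varepsilon)/p<1$ precludes Hardy--Littlewood-type boundedness at this exponent. The resolution is to trade this unfavorable $g^p$-bound for a \emph{lower} bound on $M^{E_Q}_Qu$ valid uniformly on the set $B(w,2r_Q)$, which has measure comparable to $\mu(B_0)$; this is what allows the $M^{E_Q}_Qu(y)$-term to be absorbed into the integral of $(M^{E_Q}_Qu)^{p-\varepsilon}$ over $B_0$ on the right-hand side.
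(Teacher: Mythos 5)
Your proof is correct and follows essentially the same route as the paper: the term $M^{\sharp}u$ is handled via the $(\beta,p,q)$-Poincar\'e inequality of Theorem~\ref{t.qp_poincare} together with the Hardy--Littlewood theorem at exponent $(p-\varepsilon)/q>1$, and the admissible balls for $M^{E_Q}u$ not contained in $Q^*$ (which by (W1) have radius comparable to $r_Q$ or larger) are reduced to a Whitney-scale average of $\lvert u\rvert$ centered in $E_Q$, which is then recognized as a value of $M^{E_Q}_{Q}u$ on a set of measure comparable to $\mu(B_0)$. The only real difference is one of implementation: the paper anchors this average at the single ball $Q^*$, splitting $u=(u-u_{Q^*})+u_{Q^*}$ and sending the oscillation to the $g^{p-\varepsilon}$ term, whereas you use the $B$-dependent anchor $B^\dagger=B(x_B,3r_Q)$ and take an infimum over $y\in B(w,2r_Q)\subset B^\dagger$; both devices correctly sidestep the dead end you yourself flag, namely that invoking Lemma~\ref{l.b_poincare} here would produce an $L^p$ average of $g$ that cannot be controlled in $L^{p-\varepsilon}$ since $(p-\varepsilon)/p<1$.
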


\begin{proof}
Fix $g\in\mathcal{D}_H^{\beta}(u)$.
We have
\begin{equation}\label{e.kaksi}
\begin{split}
&\int_{B_0} \big(M^{\sharp} u(x)+M^{E_Q}u(x)\big)^{p-\varepsilon}\,d\mu(x)\\
&\qquad \le C(p)\int_{B_0}\left(M^{\sharp} u(x)\right)^{p-\varepsilon}d\mu(x) + C(p)\int_{B_0}\left(M^{E_Q} u(x)\right)^{p-\varepsilon}d\mu(x)\,.
\end{split}
\end{equation}
Let $x\in B_0$ and let $B\in\mathcal{B}_0$ be such that
$x\in B$.
By \eqref{e.B_0} and the $(\beta,p,q)$-Poincar\'e inequality, see Theorem~\ref{t.qp_poincare},  we obtain 
\begin{align*}
&\bigg(\frac{1}{\diam(B)^{\beta p}}\vint_B \lvert u(y)-u_B\rvert^p\,d\mu(y)\bigg)^{1/p}
\\&\qquad \le C(c_\mu,p,\beta)\bigg(\vint_B g(y)^q\,d\mu(y)\bigg)^{1/q}
\le C(c_\mu,p,\beta)(M(g^q\mathbf{1}_{B_0})(x))^{\frac{1}{q}}\,.
\end{align*}
Here $M$ is the non-centered Hardy--Littlewood maximal function operator.
By taking supremum over balls $B$ as above, we obtain
\[
M^{\sharp} u(x)=M^{\sharp,p}_{\beta,\mathcal{B}_0}u(x)\le
 C(\beta,p,c_\mu)\ed (M(g^q\mathbf{1}_{B_0})(x))^{\frac{1}{q}}\,.
\]
Since $p-\varepsilon>q$, the Hardy--Littlewood maximal
function theorem \cite[Theorem~3.13]{MR2867756} implies that
\begin{align*}
\int_{B_0}\left(M^{\sharp} u(x)\right)^{p-\varepsilon}d\mu(x)
&\le C(\beta,p,c_\mu)\int_{B_0}(M(g^q\mathbf{1}_{B_0})(x))^{\frac{p-\varepsilon}{q}}d\mu(x)
\\&\le \frac{C(\beta,p,c_\mu)}{p-q-\varepsilon}\ed \int_{B_0} g(x)^{p-\varepsilon}\,d\mu(x)\,.
\end{align*}
Since $\varepsilon<(p-q)/2$, 
this provides an estimate for the first term in the right-hand side of \eqref{e.kaksi}.

In order to estimate the second term in the right-hand side of \eqref{e.kaksi}, we let
$x\in B_0\setminus Q^*$ and let $B\in\mathcal{B}_0$ be such that
$x\in B$. We will estimate the term
\[
\bigg(\frac{\mathbf{1}_{E_Q}(x_B)}{\diam(B)^{\beta p}}\vint_B \lvert u(y)\rvert^p\,d\mu(y)\bigg)^{1/p}\,,
\]
where $x_B$ is the center of $B$.
Clearly we may assume that $x_B\in E_Q\subset \iol{Q}$. By condition (W1),  we see
that $\mathrm{diam}(B)\ge C\diam(B_0)$ and $\mu(B)\ge C(c_\mu)\mu(B_0)$. Since
$B\in\mathcal{B}_0$, we have $B\subset B_0$. Thus,
\[
\bigg(\frac{\mathbf{1}_{E_Q}(x_B)}{\diam(B)^{\beta p}}\vint_B \lvert u(y)\rvert^p\,d\mu(y)\bigg)^{1/p}\le  C(p,c_\mu)
\bigg(\frac{1}{\diam(B_0)^{\beta p}}\vint_{B_0} \lvert u(y)\rvert^p\,d\mu(y)\bigg)^{1/p}\,.
\]
By taking supremum over balls $B$ as above, we obtain 
\[
M^{E_Q}u(x)=M^{E_Q,p}_{\beta,\mathcal{B}_0}u(x)\le C(p,c_\mu)\bigg(\frac{1}{\diam(B_0)^{\beta p}}\vint_{B_0} \lvert u(y)\rvert^p\,d\mu(y)\bigg)^{1/p}
\]
for all $x\in B_0\setminus Q^*$.
By integrating, we obtain
\begin{equation}\label{e.out_ball}
\begin{split}
&\int_{B_0\setminus Q^*}\left(M^{E_Q} u(x)\right)^{p-\varepsilon}d\mu(x)\\&\qquad \le 
C(p,c_\mu)\mu(B_0)\bigg(\frac{1}{\diam(B_0)^{\beta p}}\vint_{B_0} \lvert u(y)\rvert^p\,d\mu(y)\bigg)^{\frac{p-\varepsilon}{p}}\\
&\qquad \le \frac{C(p,c_\mu)\mu(B_0)}{\diam(B_0)^{\beta(p-\varepsilon)}}\left[ \left(  
\vint_{B_0} \lvert u(y)-u_{Q^*}\rvert^p\,d\mu(y)\right)^\frac{p-\varepsilon}{p} + \lvert u_{Q^*}\rvert^{p-\varepsilon}\right]\,.
\end{split}
\end{equation}
By the $(\beta,p,q)$-Poincar\'e inequality and H\"older's inequality with $q<p-\varepsilon$, we obtain
\begin{align*}
&\frac{C(p,c_\mu)\mu(B_0)}{\diam(B_0)^{\beta(p-\varepsilon)}}\left(  
\vint_{B_0} \lvert u(y)-u_{Q^*}\rvert^p\,d\mu(y)\right)^\frac{p-\varepsilon}{p}
\\&\qquad \le  \frac{C(p,c_\mu)\mu(B_0)}{\diam(B_0)^{\beta(p-\varepsilon)}}\left[\left(  
\vint_{B_0} \lvert u(y)-u_{B_0}\rvert^p\,d\mu(y)\right)^\frac{p-\varepsilon}{p}
+ \lvert u_{B_0}-u_{Q^*}\rvert^{p-\varepsilon}\right]\\
&\qquad \le \frac{C(p,c_\mu)\mu(B_0)}{\diam(B_0)^{\beta(p-\varepsilon)}}\left(\vint_{B_0} \lvert u(y)-u_{B_0}\rvert^p\,d\mu(y)\right)^\frac{p-\varepsilon}{p}\\
&\qquad \le C(\beta,p,c_\mu)\mu(B_0)\bigg(\vint_{B_0} g(x)^q\,d\mu(x)\bigg)^{\frac{p-\varepsilon}{q}}\\
&\qquad\le  C(\beta,p,c_\mu)\int_{B_0} g(x)^{p-\varepsilon}\,d\mu(x)\,.
\end{align*}
On the other hand, since $Q^*=B(w,4r_Q)$ with $w\in E_Q$ and $r_Q=R/128$, \ed we have
\begin{align*}
\frac{C(p,c_\mu)\mu(B_0)}{\diam(B_0)^{\beta(p-\varepsilon)}} \lvert u_{Q^*}\rvert^{p-\varepsilon}
&\le C(p,c_\mu)\frac{\mu(Q^*)}{\diam(Q^*)^{\beta(p-\varepsilon)}}\lvert u_{Q^*}\rvert^{p-\varepsilon}
\\& \le C(p,c_\mu)\int_{Q^*} \left(
\frac{\mathbf{1}_{E_Q}({w})}{\diam(Q^*)^{\beta p}}\vint_{Q^*} \lvert u(y)\rvert^p\,d\mu(y) \right)^{\frac{p-\varepsilon}{p}}\,d\mu(x)\\
&\le C(p,c_\mu)\int_{Q^*} \big(\mathbf{1}_{Q^*}(x) M^{E_Q,p}_{\beta,\mathcal{B}_{Q}} u(x)\big)^{p-\varepsilon}\,d\mu(x)\\
&= C(p,c_\mu)\int_{B_0} \big(M^{E_Q}_{Q} u(x)\big)^{p-\varepsilon}\,d\mu(x)\,.
\end{align*}
This concludes the estimate for the integral in \eqref{e.out_ball} over
$B_0\setminus Q^*$.

To estimate the integral over the set $Q^*$, we fix $x\in Q^*$. Let
$B\in\mathcal{B}_0$ be such that $x\in B$. If $B\subset Q^*$, then
\begin{align*}
\bigg(\frac{\mathbf{1}_{E_Q}(x_B)}{\diam(B)^{\beta p}}\vint_B \lvert u(y)\rvert^p\,d\mu(y)\bigg)^{1/p}\le \mathbf{1}_{Q^*}(x) M^{E_Q,p}_{\beta,\mathcal{B}_{Q}} u(x)=M^{E_Q}_{Q} u(x)\,.
\end{align*}
Next we consider the case $B\not\subset Q^*$, and again we need to estimate the quantity
\[
\bigg(\frac{\mathbf{1}_{E_Q}(x_B)}{\diam(B)^{\beta p}}\vint_B \lvert u(y)\rvert^p\,d\mu(y)\bigg)^{1/p}\,.
\]
We may assume that $x_B\in E_Q\subset \iol{Q}$. By condition (W1),  we obtain $\diam(B)\ge C\diam(B_0)$ and $\mu(B)\ge C(c_\mu)\mu(B_0)$.
Hence,
\[
\bigg(\frac{\mathbf{1}_{E_Q}(x_B)}{\diam(B)^{\beta p}}\vint_B \lvert u(y)\rvert^p\,d\mu(y)\bigg)^{1/p}\le  C(p,c_\mu)
\bigg(\frac{1}{\diam(B_0)^{\beta p}}\vint_{B_0} \lvert u(y)\rvert^p\,d\mu(y)\bigg)^{1/p}\,.
\]
By taking supremum over balls $B$ as above, we obtain 
\[
M^{E_Q}u(x)\le M^{E_Q}_{Q} u(x)+C(p,c_\mu)
\bigg(\frac{1}{\diam(B_0)^{\beta p}}\vint_{B_0} \lvert u(y)\rvert^p\,d\mu(y)\bigg)^{1/p}
\]
for all $x\in Q^*$.
It follows that
\begin{align*}
&\int_{Q^*} \left( M^{E_Q}u(x)\right)^{p-\varepsilon}\,d\mu(x)\\
&\qquad \le 
C(p,c_\mu)\int_{B_0} \big(M^{E_Q}_{Q} u(x)\big)^{p-\varepsilon}\,d\mu(x) + 
C(p,c_\mu)\mu(B_0)\bigg(\frac{1}{\diam(B_0)^{\beta p}}\vint_{B_0} \lvert u(y)\rvert^p\,d\mu(y)\bigg)^{\frac{p-\varepsilon}{p}}\,.
\end{align*}
We can now estimate as above, and complete the proof.
\end{proof}

The following lemma is variant of \cite[Lemma 4.12]{MR3895752}.
We also refer to \cite[Lemma 3.6]{MR1681586}.

\begin{lemma}\label{l.arm_local}
Fix $x,y\in Q^*$. Then
\begin{equation}\label{e.des_hld}
\lvert u(x)-u(y)\rvert \le C(\beta,c_\mu)\:  d(x,y)^\beta 
\big(M^{\sharp} u(x)+M^{\sharp} u(y)\big)
\end{equation}
and
\begin{equation}\label{e.des_sec}
\lvert u(x)\rvert \le 
C(\beta,c_\mu) \: d(x,E_Q)^\beta \left(M^{\sharp} u(x)+M^{E_Q} u(x)\right)\,.
\end{equation}
Furthermore, assuming that  $\lambda>0$, 
then the restriction 
$u|_{E_Q\cup (Q^*\setminus U^\lambda)}$ is a $\beta$-H\"older function in the set $E_Q\cup (Q^*\setminus U^\lambda)$ 
with constant $\kappa=C(\beta,c_\mu)\lambda$.
\end{lemma}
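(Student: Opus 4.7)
The three assertions build on each other: the pointwise H\"older estimate \eqref{e.des_hld} is a standard fractional sharp-maximal-function bound, the boundary estimate \eqref{e.des_sec} is obtained by chaining from $x$ to a ball centered in $E_Q$, and the H\"older property on $E_Q\cup(Q^{*}\setminus U^{\lambda})$ then follows by a short case analysis in which one exploits that $u$ vanishes on $E_Q$.

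To establish \eqref{e.des_hld}, fix $x,y\in Q^{*}$, set $d=d(x,y)$, and consider the reference ball $B^{*}=B(x,2d)$ together with the two chains $B_i=B(x,2^{-i+1}d)$ and $C_j=B(y,2^{-j+1}d)$ for $i,j\ge 0$. Then $B_0=B^{*}$, $B_{i+1}\subset B_i$, $C_1\subset B^{*}\subset B(y,3d)$, and doubling gives $\mu(B_i)\le C(c_\mu)\mu(B_{i+1})$ and $\mu(B^{*})\le C(c_\mu)\mu(C_1)$. Since each radius is bounded by $2d\le 2\diam(Q^{*})$, property (W4) places every such ball in $\mathcal{B}_0$. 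A H\"older-inequality computation combined with the definition of $M^{\sharp,p}_{\beta,\mathcal{B}_0}u$ produces
\[
|u_{B_{i+1}}-u_{B_i}|\le C(c_\mu)\diam(B_i)^{\beta}M^{\sharp,p}_{\beta,\mathcal{B}_0}u(x),
\]
and an analogous bound for the $C_j$-chain. Telescoping the resulting geometric-type series and using the continuity of $u$ to pass to the limit yields $|u(x)-u_{B^{*}}|\le C(\beta,c_\mu)d^{\beta}M^{\sharp}u(x)$ and $|u(y)-u_{B^{*}}|\le C(\beta,c_\mu)d^{\beta}M^{\sharp}u(y)$; the triangle inequality then gives \eqref{e.des_hld}.

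For \eqref{e.des_sec}, fix $x\in Q^{*}$; the case $x\in E_Q$ is immediate, so assume $r:=2d(x,E_Q)>0$. Choose $z\in E_Q$ with $d(x,z)<r$ and set $B=B(z,2r)$. Since $z\in\overline{Q}\subset Q^{*}$ and $2r\le 2\diam(Q^{*})$, property (W4) gives $B\in\mathcal{B}_0$, and clearly $x\in B$. Because the center of $B$ lies in $E_Q$, the very definition of $M^{E_Q}u$ yields
\[
|u_B|\le\Bigl(\vint_{B}|u|^{p}\,d\mu\Bigr)^{1/p}\le \diam(B)^{\beta}M^{E_Q}u(x)\le C\,d(x,E_Q)^{\beta}M^{E_Q}u(x).
\]
To bound $|u(x)-u_B|$, reuse the telescoping argument along $B_1=B(x,r)\subset B$ (of comparable measure by doubling) and the shrinking chain $B(x,2^{-i+1}r)$, each of which lies in $\mathcal{B}_0$ by (W4). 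This produces $|u(x)-u_B|\le C(\beta,c_\mu)d(x,E_Q)^{\beta}M^{\sharp}u(x)$, and adding the two bounds gives \eqref{e.des_sec}.

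The last assertion follows by inspecting the possibilities for $x,y\in E_Q\cup(Q^{*}\setminus U^{\lambda})$. If $x,y\in E_Q$ then $u(x)=u(y)=0$. If $x,y\in Q^{*}\setminus U^{\lambda}$, then \eqref{e.des_hld} and $M^{\sharp}u(x)+M^{\sharp}u(y)\le 2\lambda$ yield $|u(x)-u(y)|\le C(\beta,c_\mu)\lambda\,d(x,y)^{\beta}$. In the mixed case $x\in E_Q$, $y\in Q^{*}\setminus U^{\lambda}$, applying \eqref{e.des_sec} at $y$ and using $d(y,E_Q)\le d(y,x)$ gives $|u(x)-u(y)|=|u(y)|\le C(\beta,c_\mu)\lambda\,d(x,y)^{\beta}$. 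The only real difficulty in the whole argument is bookkeeping: every ball appearing in a chain must belong to the restricted family $\mathcal{B}_0$ so that $M^{\sharp}u$ and $M^{E_Q}u$ actually control the corresponding averages, and the Whitney-type property (W4) is designed precisely to guarantee this at all the scales that arise.
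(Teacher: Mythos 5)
Your proof is correct and follows essentially the same route as the paper's: a telescoping/chaining estimate for \eqref{e.des_hld}, a chain to a ball centered at a near-nearest point of $E_Q$ for \eqref{e.des_sec}, and the same four-case analysis for the H\"older property. One tiny bookkeeping point: in your proof of \eqref{e.des_sec} the radius $2r=4d(x,E_Q)$ may exceed $2\diam(Q^*)$ (since $d(x,E_Q)$ can approach $4r_Q$ while $\diam(Q^*)$ can equal $4r_Q$ in a geodesic space), so (W4) does not literally apply to $B(z,2r)$ --- but $B(z,2r)\subset B(w,17r_Q)\subset B_0$ directly, so the ball still lies in $\mathcal{B}_0$ and the argument goes through; the paper avoids this by capping the distance by $\diam(Q^*)$.
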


\begin{proof}
The property (W4) is used below several times without further notice. \ed
Let $z\in Q^*$ and $0<r\le 2\diam(Q^*)$. Write
$B_i=B(z,2^{-i}r)\in\mathcal{B}_0$ for each $i\in \{0,1,\ldots\}$. Then, with the standard `telescoping' argument, see for instance the proof of \cite[Lemma 3.6]{MR1681586}, we obtain
\begin{align*}
\lvert u(z)-u_{B(z,r)}\rvert
&\le c_\mu \sum_{i=0}^\infty \vint_{B_i} \lvert u-u_{B_i}\rvert\,d\mu \\ 
&\le c_\mu \sum_{i=0}^\infty 2^{\beta(1-i)}r^\beta \bigg(\frac{1}{\diam(B_i)^{\beta p}}\vint_{B_i} \lvert u-u_{B_i}\rvert^p\,d\mu\bigg)^{1/p}\\
&\le c_\mu  M^{\sharp}u(z) \sum_{i=0}^\infty 2^{\beta(1-i)}r^\beta 
\le C(\beta,c_\mu)\, r^\beta M^{\sharp}u(z)\,.
\end{align*}
Fix $x,y\in Q^*$.
Since $0<d=d(x,y)\le \diam(Q^*)$, we obtain
\begin{align*}
\lvert u(y)-u_{B(x,d)}\rvert &\le \lvert u(y)-u_{B(y,2d)}\rvert+\lvert u_{B(y,2d)}-u_{B(x,d)}\rvert\\
&\le C(\beta,c_\mu)\, d^\beta M^{\sharp}u(y) + \frac{\mu(B(y,2d))}{\mu(B(x,d))}\vint_{B(y,2d)} \lvert u-u_{B(y,2d)}\rvert\,d\mu\\
&\le C(\beta,c_\mu)\,d^\beta \Biggl[M^{\sharp}u(y) +  \Biggl(\frac{1}{\diam(B(y,2d))^{\beta p}}\vint_{B(y,2d)} \lvert u-u_{B(y,2d)}\rvert^p\,d\mu\bigg)^{1/p}\Biggr]\\
&\le C(\beta,c_\mu)\, d^\beta M^{\sharp}u(y)\,.
\end{align*}
It follows that
\begin{align*}
\lvert u(x)-u(y)\rvert&\le \lvert u(x)-u_{B(x,d)}\rvert + \lvert u_{B(x,d)}-u(y)\rvert
\le C(\beta,c_\mu)\, d(x,y)^\beta  \big( M^{\sharp}u(x)+ M^{\sharp}u(y) \big)\,,
\end{align*}
which is the desired inequality \eqref{e.des_hld}.

To prove inequality \eqref{e.des_sec}, we let $x\in Q^*$. If
$d(x,E_Q)=0$, then $x\in E_Q$ and we are done since $u=0$ in $E_Q$. Therefore we may
assume that $d(x,E_Q)>0$. Then there exists
$y\in E_Q\subset \iol{Q}\subset Q^*$ such that $d=d(x,y)<\min\{2d(x,E_Q), \diam(Q^*)\}$ and we have
\begin{align*}
\lvert u(x)\rvert &\le \lvert u(x)-u_{B(y,d)}\rvert + \lvert u_{B(y,d)}\rvert
\\ &\le C(\beta,c_\mu)d^\beta M^{\sharp}u(x)
+ c_\mu\vint_{B(y,2d)} \lvert u\rvert\,d\mu\\
&\le C (\beta,c_\mu)d^\beta M^{\sharp}u(x)
+
c_\mu(4d)^\beta\left(\frac{\mathbf{1}_{E_Q}(y)}{\diam(B(y,2d))^{\beta p}}\vint_{B(y,2d)}\lvert u\rvert^p \,d\mu\right)^{\frac{1}{p}}\\&\le 
C(\beta,c_\mu)d^\beta \left(M^{\sharp} u(x)+M^{E_Q} u(x)\right)\\
&\le C(\beta,c_\mu)d(x,E_Q)^\beta \left(M^{\sharp} u(x)+M^{E_Q} u(x)\right)\,.
\end{align*}
Inequality \eqref{e.des_sec} follows.

Fix $\lambda>0$. Next we show that $u|(E_Q\cup (Q^*\setminus U^\lambda))$ is $\beta$-H\"older
with constant $\kappa=C(\beta,c_\mu)\lambda$.
Let $x,y\in E_Q\cup (Q^*\setminus U^{\lambda})$. There are four cases
to be considered. First, if $x,y\in E_Q$, then
\[
\lvert u(x)-u(y)\rvert=0\le \kappa d(x,y)^\beta,
\]
since $u=0$  in $E_Q$. If $x,y\in Q^*\setminus U^\lambda$, then
we apply \eqref{e.des_hld} and obtain
\[
\lvert u(x)-u(y)\rvert\le C(\beta,c_\mu) d(x,y)^\beta
\big(M^{\sharp} u(x)+M^{\sharp} u(y)\big)\le C(\beta,c_\mu) \lambda d(x,y)^\beta\,.
\]
Here we also used the fact that $Q^*\subset B_0$. If $x\in E_Q$ and
$y\in Q^*\setminus U^\lambda$, we apply \eqref{e.des_sec} and get 
\begin{align*}
\lvert u(x)-u(y)\rvert = \lvert u(y)\rvert \le C(\beta,c_\mu)d(y,E_Q)^\beta \left(M^{\sharp} u(y)+M^{E_Q} u(y)\right)\le C(\beta,c_\mu)\lambda d(x,y)^\beta\,.
\end{align*}
The last case $x\in Q^*\setminus U^\lambda$ and $y\in E_Q$  is 
treated in similar way.
\end{proof}

\subsection{Stopping construction}\label{s.stopping}

 We  continue as in \cite{MR3895752} and construct a stopping family $\mathcal{S}_\lambda(Q)$ of pairwise disjoint balls
whose $5$-dilations cover the set $Q^\lambda\subset Q^*=B(w,4r_Q)$; recall  \eqref{e.super}. \ed 
Let
$B\in\mathcal{B}_{Q}$ be a ball centered at  $x_B\in E_Q\subset \iol{Q}$. The {\em parent ball} of $B$ is then defined to be $\pi(B)=2B$ if $2B\subset Q^*$ and $\pi(B)=Q^*$ otherwise.
Observe that $B\subset \pi(B)\in\mathcal{B}_Q$ and  the center of $\pi(B)$ satisfies $x_{\pi(B)}
\in\{x_B,w\}\subset E_Q$.  It follows that all the balls
$B\subset \pi(B)\subset \pi(\pi(B))\subset \dotsb$  are well-defined, belong to $\mathcal{B}_Q$ and are centered at $E_Q$.
By inequalities \eqref{e.doubling} and \eqref{e.diams}, and property (W1) if needed,  we have $\mu(\pi(B))\le c_\mu^5 \mu(B)$
and $\diam(\pi(B))\le 16\diam(B)$.

Then we come to the stopping time argument. 
We will use as a threshold value the number
\[
\lambda_Q=\bigg(\frac{1}{\mathrm{diam}(Q^*)^{\beta p}} \vint_{Q^*} \lvert u(y)\rvert^p\,d\mu(y)\bigg)^{1/p}=\bigg(\frac{\mathbf{1}_{E_Q}(w)}{\mathrm{diam}(Q^*)^{\beta p}} \vint_{Q^*} \lvert u(y)\rvert^p\,d\mu(y)\bigg)^{1/p}.
\]
Fix a level $\lambda>\lambda_Q/2$. Fix a point $x\in Q^\lambda\subset Q^*$.
If $\lambda_Q/2<\lambda<\lambda_Q$, then we choose
$B_x=Q^*\in\mathcal{B}_{Q}$. If 
$\lambda\ge \lambda_Q$, then by using the condition $x\in Q^\lambda$ we first choose a starting ball $B$, with $x\in B\in\mathcal{B}_Q$, such that
\[
\lambda <
\bigg(\frac{\mathbf{1}_{E_Q}(x_B)}{\mathrm{diam}(B)^{\beta p}} \vint_{B} \lvert u(y)\rvert^p\,d\mu(y)\bigg)^{1/p}\,.
\]
Observe that $x_B\in E_Q\subset \iol{Q}$. 
We continue by looking at the balls $B\subset \pi(B) \subset \pi(\pi(B))\subset \dotsb$ 
and we 
stop at the first among them, denoted by $B_x\in\mathcal{B}_{Q}$, that satisfies the following  two  stopping conditions: 
\begin{align*}
\begin{cases} 
\lambda <
\displaystyle\bigg(\frac{\mathbf{1}_{E_Q}(x_{B_x})}{\mathrm{diam}(B_x)^{\beta p}} \vint_{B_x} \lvert u(y)\rvert^p\,d\mu(y)\bigg)^{1/p},\\
\displaystyle \bigg(\frac{\mathbf{1}_{E_Q}(x_{\pi(B_x)})}{\mathrm{diam}(\pi(B_x))^{\beta p}} \vint_{\pi(B_x)} \lvert u(y)\rvert^p\,d\mu(y)\bigg)^{1/p}\le \lambda \,.
 \end{cases}
\end{align*}
The inequality $\lambda\ge \lambda_Q$ in combination with 
 the fact that  $Q^*\subsetneq X$  ensures the existence of such a stopping ball.

In any case,  the chosen ball $B_x\in\mathcal{B}_Q$  contains
the point $x$, is centered at $x_{B_x}\in E_Q$, \ed and satisfies inequalities
\begin{equation}\label{e.loc_stop}
\lambda<
\bigg(\frac{\mathbf{1}_{E_Q}(x_{B_x})}{\mathrm{diam}(B_x)^{\beta p}} \vint_{B_x} \lvert u(y)\rvert^p\,d\mu(y)\bigg)^{1/p}\le 16c_\mu^{5/p} \lambda\ed\,.
\end{equation}
By the $5r$-covering lemma \cite[Lemma~1.7]{MR2867756},  we obtain a countable disjoint family 
\[\mathcal{S}_\lambda(Q)\subset \{B_x\,:\, x\in Q^\lambda\}\,,\qquad \lambda>\lambda_Q/2\,,\]
of {\em stopping balls}  such that $Q^\lambda\subset \bigcup_{B\in\mathcal{S}_\lambda(Q)}
5B$.
Let us remark that, by the condition (W2) and stopping inequality \eqref{e.loc_stop}, we have $B\subset U^{\lambda}$ 
if
$B\in \mathcal{S}_\lambda(Q)$ and $\lambda>\lambda_Q/2$.

\subsection{Level set estimates}\label{ss.auxiliary_local}

Next we prove
two technical results: Lemma \ref{l.dyadic} and Lemma \ref{l.mainl_local}.
We follow the approach in \cite{MR3895752} quite closely, but
we give  details since technical modifications are required.
A counterpart of the following lemma can be found also in \cite[Lemma 3.1.2]{MR2415381}.
Recall that $k\in\N$ is a fixed number and
$\alpha=\beta p^2/(2(s+\beta p))>0$ with $s=\log_2 c_\mu> 0$.

\begin{lemma}\label{l.dyadic}
Suppose that $\lambda>\lambda_Q/2$ and 
let $B\in\mathcal{S}_\lambda(Q)$
be such that $\mu (U_{B}^{2^k\lambda}) < \mu(B)/2$.
Then
\begin{equation}\label{e.abso}
\begin{split}
&\frac{1}{\mathrm{diam}(B)^{\beta p}}\int_{U_{B}^{2^k\lambda}}\lvert u(x)\rvert^p\,d\mu(x)\\&\qquad\le C(p,c_\mu)2^{-k\alpha} (2^{k}\lambda)^p 
\mu(U_{B}^{2^k\lambda})
+\frac{C(p,c_\mu)}{\mathrm{diam}(B)^{\beta p}}\int_{B\setminus U^{2^k\lambda}} \lvert u(x)\rvert^p\,d\mu(x)\,.
\end{split}
\end{equation}
\end{lemma}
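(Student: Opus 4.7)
The strategy is to replace $u$ by a truncated McShane extension $v$ that preserves the ``good'' values of $u$, and then split the integral of $|u|^p$ on $U_B^{2^k\lambda}$ into pieces coming from $|v|^p$ and $|u-v|^p$.

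By Lemma \ref{l.arm_local}, the restriction $u|_{E_Q \cup (Q^* \setminus U^{2^k\lambda})}$ is $\beta$-H\"older with constant $\kappa := C(\beta, c_\mu)\, 2^k\lambda$. Via the McShane formula \eqref{McShane}, I would extend this restriction to a function $v \in \mathrm{Lip}_\beta(X)$ with the same constant $\kappa$; then $v \equiv 0$ on $E_Q$, $v = u$ on $Q^* \setminus U^{2^k\lambda}$, and pointwise $|u|^p \le 2^{p-1}(|v|^p + |u-v|^p)$. Since $u - v$ vanishes outside $U^{2^k\lambda} \cap Q^*$, the $|u - v|^p$ contribution will be controlled by applying the glueing lemma (Lemma \ref{l.glueing}) with $A = (U^{2^k\lambda} \cap Q^*) \setminus E_Q$ to produce a Haj{\l}asz gradient for $v$ concentrated on this set, and then invoking the local boundary Poincar\'e inequality (Lemma \ref{l.b_poincare}) on $B$ (which is available since $x_B \in E_Q$ and $B \subset Q^*$). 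Because $v = u$ on $B \setminus U^{2^k\lambda}$, this piece produces the term $\frac{C}{\mathrm{diam}(B)^{\beta p}}\int_{B \setminus U^{2^k\lambda}} |u|^p\,d\mu$ on the right of \eqref{e.abso}.

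The main technical point is the $|v|^p$ contribution, which must be bounded by $C\,2^{-k\alpha}(2^k\lambda)^p\,\mu(U_B^{2^k\lambda})$. The naive bound $|v(x)| \le \kappa\, d(x, E_Q)^\beta \le \kappa\,\mathrm{diam}(B)^\beta$, coming directly from $v \equiv 0$ on $E_Q$ and the $\beta$-H\"older property, yields only a factor $C(2^k\lambda)^p$ without the crucial decay $2^{-k\alpha}$. To extract this decay, I would iterate pointwise H\"older estimates from Lemma \ref{l.arm_local} across the nested dyadic superlevel sets $U^{2^j\lambda}$ for $j = 0, 1, \dots, k$, combined with the reverse-doubling inequality \eqref{e.rev_dbl_decay} and the scaled estimate \eqref{e.radius_measure} controlling the geometric shrinking of measure across shells of radius $\sim 2^{-j}\mathrm{diam}(B)$. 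The specific exponent $\alpha = \beta p^2/(2(s + \beta p))$ arises from optimally balancing the H\"older scaling factor $2^\beta$ per level against the measure-growth exponent $s = \log_2 c_\mu$, in essence adapting the Keith--Zhong covering and iteration scheme from \cite{MR2415381} as carried out in \cite{MR3895752} to the present truncated boundary setting. This iterated decomposition is the main obstacle: once it yields the factor $2^{-k\alpha}$ on the $|v|^p$ piece, combining with the bound for $|u-v|^p$ gives \eqref{e.abso}.
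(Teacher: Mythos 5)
Your proposal has two genuine gaps, and neither of the two terms on the right of \eqref{e.abso} is actually produced by the route you describe. First, the term $\diam(B)^{-\beta p}\int_{B\setminus U^{2^k\lambda}}\lvert u\rvert^p\,d\mu$: you say it comes from applying Lemma \ref{l.glueing} and the local boundary Poincar\'e inequality (Lemma \ref{l.b_poincare}) to control $\lvert u-v\rvert^p$. But Lemma \ref{l.b_poincare} bounds $\vint_B\lvert w\rvert^p$ by $K\diam(B)^{\beta p}\vint_B g_w^p$, so applying it to $v$ with the glued gradient $g_v=\kappa\mathbf{1}_A+g\mathbf{1}_{X\setminus A}$ yields $C(2^k\lambda)^p\diam(B)^{\beta p}\mu(U^{2^k\lambda}_B)$ (no $2^{-k\alpha}$ gain) plus a \emph{gradient} term $\int_{B\setminus U^{2^k\lambda}}g^p\,d\mu$. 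Lemma \ref{l.dyadic} is a gradient-free statement; there is no mechanism in your argument that converts anything into an integral of $\lvert u\rvert^p$ over $B\setminus U^{2^k\lambda}$. (The gradient term rightly appears one step later, in Lemma \ref{l.mainl_local}, but not here.) In the paper that second term arises quite differently: one covers $U_B^{2^k\lambda}$ by balls $B'$ chosen, via Lemma \ref{l.continuous} and the hypothesis $\mu(U_B^{2^k\lambda})<\mu(B)/2$ (which your proposal never uses), so that exactly half of $\mu(B\cap B')$ lies in $U^{2^k\lambda}$ and half in $B\setminus U^{2^k\lambda}$; the averages $u_{B'_O}$ over the outside halves then give $\int_{B\setminus U^{2^k\lambda}}\lvert u\rvert^p$ directly.

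Second, and more importantly, the decay factor $2^{-k\alpha}$ is never actually derived. You correctly observe that the naive H\"older bound on $v$ fails, but the replacement you sketch --- iterating Lemma \ref{l.arm_local} over the nested sets $U^{2^j\lambda}$ with reverse doubling --- does not identify the actual source of the gain. The essential ingredient, absent from your proposal, is the stopping inequality \eqref{e.loc_stop}: because $B\in\mathcal{S}_\lambda(Q)$, one has $\diam(B)^{-\beta p}\vint_B\lvert u\rvert^p\le C\lambda^p$ at level $\lambda$, while the oscillation of $u$ on each half-in-half-out ball $5B'$ is of size $2^k\lambda\,\diam(5B')^\beta$ (since $B'_O\subset B\setminus U^{2^k\lambda}$). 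Comparing this local cost against the global budget forces $\diam(5B')^{\beta p}\mu(B'\cap B)\lesssim 2^{-mp}\diam(B)^{\beta p}\mu(B)$ with $2^{mp}\sim 2^{kp}$, and then \eqref{e.radius_measure} together with Lemma \ref{l.ball_measures} converts this into $\bigl(\diam(5B')/\diam(B)\bigr)^{\beta p}\le C2^{-k\alpha}$ with $\alpha=\beta p^2/(2(s+\beta p))$. Without invoking the stopping property of $B$ there is simply no source for the factor $2^{-k\alpha}$, so the central estimate of the lemma is missing from your argument.
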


\begin{proof}
Fix $x\in U_{B}^{2^k\lambda}\subset B$ and consider
the function $h\colon (0,\infty)\to\R$, 
\[
r\mapsto h(r)= \frac{\mu(U_{B}^{2^k\lambda}\cap B(x,r))}{\mu(B\cap B(x,r))}=\frac{\mu(U_{B}^{2^k\lambda}\cap B(x,r))}{\mu(B(x,r))}\cdot \bigg(\frac{\mu(B\cap B(x,r))}{\mu(B(x,r))}\bigg)^{-1}\,.
\]
By Lemma \ref{l.continuous} and the fact that $B$ is open, we find that $h\colon (0,\infty)\to \R$ is continuous.
 Observe that $U_{B}^{2^k\lambda}=U^{2^k\lambda}\cap B$ is also open.  Since $h(r)=1$ for small values of $r>0$
and $h(r)<1/2$ for  $r>\diam(B)$, we have $h(r_x)=1/2$ for some $0<r_x\le \diam(B)$.
Write $B'_x=B(x,r_x)$. Then
\begin{equation}\label{e.tok}
\frac{\mu(U_{B}^{2^k\lambda}\cap  B'_x)}{\mu(B\cap B'_x)}=h(r_x)=\frac{1}{2}
\end{equation}
and
\begin{equation}\label{e.ens}
\frac{\mu((B\setminus U^{2^k\lambda})\cap  B'_x)}{\mu(B\cap  B'_x)}
=1-\frac{\mu(U_{B}^{2^k\lambda}\cap  B'_x)}{\mu(B\cap B'_x)}
= 1-h(r_x)=\frac{1}{2}\,.
\end{equation}
The $5r$-covering lemma \cite[Lemma~1.7]{MR2867756}
gives us a  countable disjoint family  $\mathcal{G}_\lambda\subset \{ B'_x\,:\,  x\in U_{B}^{2^k\lambda}\}$
such that $U_{B}^{2^k\lambda}\subset \bigcup_{B'\in\mathcal{G}_\lambda} 
5B'$.
Then 
\eqref{e.tok} and \eqref{e.ens}  hold for every ball $B'\in
\mathcal{G}_\lambda$; namely,
by denoting $B'_I=U_{B}^{2^k\lambda}\cap B'$ and
${B'_O}=(B\setminus U^{2^k\lambda})\cap B'$,
we have the following comparison identities:
\begin{equation}\label{e.comparison}
\mu(B'_I)=  \frac{\mu( B\cap B')}{2}=  
\mu({B'_O})\,,
\end{equation}
where all the measures are strictly positive. These identities
are important and they  are used several times throughout the remainder of this proof.

We multiply the left-hand side of \eqref{e.abso}
by $\diam(B)^{\beta p}$ and then estimate as follows:
\begin{equation}\label{e.prepare}
\begin{split}
\int_{U_{B}^{2^k\lambda}}  &\lvert u\rvert^p\,d\mu
\le\sum_{B'\in\mathcal{G}_\lambda} \int_{5B'\cap B}\lvert u\rvert^p\,d\mu\\
&\le 2^{p-1}\sum_{B'\in\mathcal{G}_\lambda} \mu(5B'\cap B) \lvert u_{{B'_O}}\rvert^p+
2^{p-1}\sum_{B'\in\mathcal{G}_\lambda} \int_{5B'\cap B}\lvert u-u_{{B'_O}}\rvert^p\,d\mu\,.
\end{split}
\end{equation}
By  \eqref{e.doubling} and Lemma \ref{l.ball_measures},
we find that
\begin{equation}\label{e.pienenee}
\mu(5B'\cap B)\le \mu(8B') \le c_\mu^3\mu(B')\le c_\mu^6 \mu(B\cap B')
\end{equation}
 for all $B'\in\mathcal{G}_\lambda$.
Hence, by the comparison identities \eqref{e.comparison},
 \begin{equation}\label{e.eka}
\begin{split}
2^{p-1}&\sum_{B'\in\mathcal{G}_\lambda} \mu(5B'\cap B)  \lvert u_{{B'_O}}\rvert^p
\le C(p,c_\mu) \sum_{B'\in\mathcal{G}_\lambda} \mu({B'_O})  
\vint_{{B'_O}} \lvert u\rvert^p\,d\mu
\\&=C(p,c_\mu)\sum_{B'\in\mathcal{G}_\lambda}  
\int_{{B'_O}} \lvert u\rvert^p\,d\mu
\le C(p,c_\mu)
\int_{B\setminus U^{2^k\lambda}} \lvert u\rvert^p\,d\mu\,.
\end{split}
\end{equation}
This concludes our analysis of the `easy term' in
\eqref{e.prepare}.
In order to treat the remaining term therein, we do need some preparations.

Let us fix a ball $B'\in\mathcal{G}_\lambda$ that satisfies
$\int_{5B'\cap B} \lvert u-u_{{B'_O}}\rvert^p\,d\mu\not=0$. 
We claim that
\begin{equation}\label{e.out}
\vint_{5B'\cap B}\lvert u-u_{{B'_O}}\rvert^p\,d\mu\le  C(p,c_\mu) 2^{-k\alpha } (2^k\lambda)^p \diam(B)^{\beta p}\,.
\end{equation}
In order to prove this inequality, we fix a number $m\in \R$ such that 
\begin{equation}\label{e.m_def}
(2^m \lambda)^p \diam(5B')^{\beta p}=\vint_{5B'\cap B}\lvert u-u_{{B'_O}}\rvert^p\,d\mu\,.
\end{equation}
Let us first consider the case $m< k/2$. Then $m-k<-k/2$, and since always $\alpha<p/2$, the desired inequality \eqref{e.out} is obtained case as follows:
\begin{align*}
\vint_{5B'\cap B}\lvert u-u_{{B'_O}}\rvert^p\,d\mu
&=2^{(m-k)p} (2^k\lambda)^p\diam(5B')^{\beta p} \\&\le 10^p\, 2^{-kp/2}(2^k\lambda)^p\diam(B)^{\beta p}
\le C(p)  2^{-k\alpha }(2^k\lambda)^p\diam(B)^{\beta p}\,.
\end{align*}

Next we consider the case $k/2\le m$.
 Observe from \eqref{e.pienenee} and
the comparison identities \eqref{e.comparison} that \ed
\begin{align*}
\vint_{5B'\cap B} \lvert u-u_{{B'_O}}\rvert^p\,d\mu &\le  
2^{p-1}\vint_{5B'\cap B} \lvert u-u_{5B'}\rvert^p\,d\mu + 2^{p-1}\lvert 
u_{5B'}-u_{{B'_O}}\rvert^p
\\&\le 2^{p+1}c_\mu^6 \vint_{5B'} \lvert u-u_{5B'}\rvert^p\,d\mu
\le 2^{p+1}c_\mu^6 (2^k\lambda)^p\mathrm{diam}(5B')^{\beta p}\,,
\end{align*}
where the last step follows from condition (W3) and the fact that $5B'\supset {B'_O}\not=\emptyset$.
By taking also \eqref{e.m_def} into account, we see that 
$2^{mp}\le 2^{p+1} c_\mu^6 2^{kp}$.
On the other hand, we have
\begin{align*}
(2^m \lambda)^p \diam(5B')^{\beta p}\mu(B'\cap B)&\le 
\int_{5B'\cap B} \lvert u-u_{{B'_O}}\rvert^p\,d\mu\\
&\le  2^{p-1}\int_{5B'\cap B} \lvert u\rvert^p\,d\mu 
+ 2^{p-1}\mu(5B'\cap B) \lvert u_{{B'_O}}\rvert^p\\
&\le 2^{p+1} c_\mu^6\int_B \lvert u\rvert^p\,d\mu\le 2 \cdot  32^p\ed c_\mu^{11}\ed \lambda^p \diam(B)^{\beta p}\mu(B)\,,
\end{align*}
where the last step follows from the fact that $B\in\mathcal{S}_\lambda(Q)$ in combination with
inequality \eqref{e.loc_stop}.
In particular, if $s=\log_2 c_\mu$ then by inequality \eqref{e.radius_measure}
and Lemma \ref{l.ball_measures}, we obtain that
\begin{align*}
\bigg(\frac{\diam(5B')}{\diam(B)}\bigg)^{s+\beta p} &\le 20^s \frac{\diam(5B')^{\beta p}\mu(B')}{\diam(B)^{\beta p}\mu(B)}
\le 20^s \cdot c_\mu^3\frac{\diam(5B')^{\beta p}\mu(B'\cap B)}{\diam(B)^{\beta p}\mu(B)}\\
&\le  2\cdot 20^s\cdot 32^p  \cdot c_\mu^{14} \cdot 2^{-mp}\le 2\cdot 20^s \cdot 32^p \cdot c_\mu^{14} \cdot 2^{-kp/2}\,.
\end{align*}
This, in turn, implies that
\[
\bigg(\frac{\diam(5B')}{\diam(B)}\bigg)^{\beta p} \le 2\cdot 20^s \cdot 32^p \cdot c_\mu^{14} \cdot 2^{\frac{-k\beta p^2}{2(s+\beta p)}}=  C(p,c_\mu) 2^{-k\alpha}\,.
\]
Combining the above estimates, we see that
\[
\vint_{5B'\cap B}\lvert u-u_{{B'_O}}\rvert^p\,d\mu=(2^m \lambda)^p \diam(5B')^{\beta p}\le C(p,c_\mu) 2^{-k\alpha } (2^k\lambda)^p \diam(B)^{\beta p}\,.
\]
That is, inequality \eqref{e.out} holds also in the present case $k/2\le m$.
This concludes the proof of inequality \eqref{e.out}.

By using  \eqref{e.pienenee} and \eqref{e.comparison} and inequality  \eqref{e.out}, we  estimate the second term in \eqref{e.prepare} as follows:
\begin{align*}
2^{p-1}\sum_{B'\in\mathcal{G}_\lambda} \int_{5B'\cap B}\lvert u-u_{{B'_O}}\rvert^p\,d\mu
&\le 2^p c_\mu^6\sum_{B'\in\mathcal{G}_\lambda} \mu(B'_I) \vint_{5B'\cap B}\lvert u-u_{{B'_O}}\rvert^p\,d\mu\\
&\le  C(p,c_\mu) 2^{-k\alpha } (2^k\lambda)^p \diam(B)^{\beta p}\sum_{B'\in\mathcal{G}_\lambda} \mu(B'_I)  
\\&\le C(p,c_\mu)2^{-k\alpha } (2^k\lambda)^p \diam(B)^{\beta p}\mu(U^{2^k\lambda}_B)\,. 
\end{align*}
Inequality \eqref{e.abso} follows by collecting the above estimates.
\end{proof}

The following lemma is essential for the proof of Theorem \ref{t.main_local}, and it is the only place
in the proof where
the capacity density condition is needed. Recall from Lemma~\ref{l.b_poincare} that
this condition implies a local boundary Poincar\'e  inequality, which is used here one single time.

\begin{lemma}\label{l.mainl_local}
Let $\lambda>\lambda_Q/2$ and  $g\in\mathcal{D}_H^{\beta}(u)$. Then
\begin{equation}\label{e.id}
\begin{split}
\lambda^p \mu(Q^\lambda)
\le C(\beta,p,c_\mu)\biggl[\frac{(\lambda 2^{k})^p}{2^{k\alpha}} \mu(U^{2^k \lambda})+   \frac{K}{k^p} \sum_{j=k}^{2k-1}  (\lambda 2^{j})^p \mu(U^{2^j \lambda})
+  K\int_{U^{\lambda}\setminus U^{4^k\lambda}} g^p\,d\mu\biggr]\,.
\end{split}
\end{equation}
\end{lemma}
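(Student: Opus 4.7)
\medskip
\noindent\textbf{Proof plan.} My strategy is a stopping-time decomposition combined with an \emph{averaged} McShane extension across the $k$ scales $2^j\lambda$ for $j\in\{k,\ldots,2k-1\}$; the averaging is what produces the decisive $1/k^p$ factor in the middle term.

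First, I would invoke the covering $Q^\lambda\subset\bigcup_{B\in\mathcal{S}_\lambda(Q)}5B$, the doubling condition, disjointness of $\mathcal{S}_\lambda(Q)$, and the stopping inequality \eqref{e.loc_stop} to reduce the claim to bounding $\lambda^p\mu(B)\le \mathrm{diam}(B)^{-\beta p}\int_B|u|^p\,d\mu$ for each $B\in\mathcal{S}_\lambda(Q)$ and summing. I then split $\mathcal{S}_\lambda(Q)$ into \emph{bad} balls (with $\mu(U_B^{2^k\lambda})\ge \mu(B)/2$) and \emph{good} balls (where the hypothesis of Lemma~\ref{l.dyadic} holds). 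For bad balls, the trivial bound $\lambda^p\mu(B)\le 2\cdot 2^{-kp}(2^k\lambda)^p\mu(U_B^{2^k\lambda})$ together with $\alpha<p$ contributes to the first term of the target after summing with disjointness. For good balls, I split $\int_B|u|^p=\int_{U_B^{2^k\lambda}}|u|^p+\int_{B\setminus U^{2^k\lambda}}|u|^p$ and apply Lemma~\ref{l.dyadic} to the first piece, which contributes to the first term and reduces the problem to controlling $\mathrm{diam}(B)^{-\beta p}\int_{B\setminus U^{2^k\lambda}}|u|^p$.

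For each $j\in\{k,\ldots,2k-1\}$, I set $A_j=E_Q\cup(Q^*\setminus U^{2^j\lambda})$; by the last statement of Lemma~\ref{l.arm_local}, $u|_{A_j}$ is $\beta$-H\"older with constant $\kappa_j\le C(\beta,c_\mu)\,2^j\lambda$. Let $v_j$ be its McShane extension to $X$ (cf.~\eqref{McShane}), and define the averaged extension
\[
v=\frac{1}{k}\sum_{j=k}^{2k-1}v_j,
\]
which vanishes on $E_Q$, agrees with $u$ on $B\setminus U^{2^k\lambda}\subset\bigcap_{j\ge k}A_j$, and is $\beta$-H\"older on $X$. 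For each $j$, Lemma~\ref{l.glueing} applied with $X\setminus(Q^*\setminus U^{2^j\lambda})$ (valid since $v_j=u$ on $Q^*\setminus U^{2^j\lambda}$) furnishes a Haj{\l}asz $\beta$-gradient
\[
g_{v_j}=\kappa_j\mathbf{1}_{X\setminus(Q^*\setminus U^{2^j\lambda})}+g\mathbf{1}_{Q^*\setminus U^{2^j\lambda}}\in\mathcal{D}_H^{\beta}(v_j),
\]
and by (D1)--(D2), $g_v=\tfrac{1}{k}\sum_j g_{v_j}\in\mathcal{D}_H^{\beta}(v)$. A pointwise case analysis on $B\subset Q^*$, partitioned by the dyadic level of $M^\sharp u+M^{E_Q}u$, then yields
\[
\int_B g_v^p\,d\mu\le C(p)\int_{B\setminus U^{4^k\lambda}}g^p\,d\mu+\frac{C(p)}{k^p}\sum_{j=k}^{2k-1}(2^j\lambda)^p\mu(U_B^{2^j\lambda}).
\]
The factor $1/k^p$ arises precisely because at a point $x\in B$ with $M^\sharp u(x)+M^{E_Q}u(x)\sim 2^{j_0}\lambda$, only $j_0-k+1<k$ of the summands $g_{v_j}=\kappa_j\lesssim 2^{j_0}\lambda$ contribute to $g_v(x)$, giving $g_v(x)\lesssim g(x)+2^{j_0}\lambda/k$; raising to the $p$-th power produces $1/k^p$.

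Finally, since $v=0$ on $E_Q$ and $B$ is centered at $x_B\in E_Q$, Lemma~\ref{l.b_poincare} yields $\int_B|v|^p\le K\mathrm{diam}(B)^{\beta p}\int_B g_v^p\,d\mu$, and $\int_{B\setminus U^{2^k\lambda}}|u|^p\le\int_B|v|^p$ because $v=u$ on $B\setminus U^{2^k\lambda}$. Summing over disjoint $B\in\mathcal{S}_\lambda(Q)$ and using $B\subset U^\lambda$ to obtain $\sum_B\mu(U_B^{2^j\lambda})\le\mu(U^{2^j\lambda})$ and $\sum_B\int_{B\setminus U^{4^k\lambda}}g^p\le\int_{U^\lambda\setminus U^{4^k\lambda}}g^p$, and combining with the bad-ball contribution, produces the claimed inequality. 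I expect the main obstacle to be the gradient bookkeeping above: applying Poincar\'e only at the single level $j=k$ would yield merely a $1/k$ improvement, and the realization that the averaged McShane extension has gradient of order $2^j\lambda/k$ rather than $2^j\lambda$ is the decisive step, adapting the Koskela--Zhong argument from \cite{MR3673660} to Haj{\l}asz gradients via the nonlocal glueing lemma.
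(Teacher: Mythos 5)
Your proposal is correct and follows essentially the same route as the paper's proof: the same reduction via the stopping family and \eqref{e.loc_stop}, the same good/bad dichotomy with Lemma~\ref{l.dyadic}, the same Keith--Zhong averaged McShane extension over the $k$ scales combined with the nonlocal glueing lemma, and the same application of Lemma~\ref{l.b_poincare}. The only (cosmetic) difference is that you extend $u$ from $E_Q\cup(Q^*\setminus U^{2^j\lambda})$ so that one global function $v$ serves all stopping balls, whereas the paper extends from $E_Q\cup(B\setminus U^{2^i\lambda})$ separately for each $B$; both variants yield the same restricted gradient $\kappa_j\mathbf{1}_{U_B^{2^j\lambda}}+g\mathbf{1}_{B\setminus U^{2^j\lambda}}$ on $B$.
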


\begin{proof}
By the covering property $Q^\lambda\subset \bigcup_{B\in\mathcal{S}_\lambda(Q)}
5B$ and doubling condition \eqref{e.doubling},
\[\lambda^p \mu(Q^\lambda)
\le \lambda^p \sum_{B\in\mathcal{S}_\lambda(Q)}
\mu(5B) \le c_\mu^3 \sum_{B\in\mathcal{S}_\lambda(Q)} \lambda^p \mu(B)\,.\]
Recall also that $B\subset U^{\lambda}$ if $B\in\mathcal{S}_\lambda(Q)$.
Therefore, and using the fact that $\mathcal{S}_\lambda(Q)$ is a disjoint family, it suffices
to prove that inequality
\begin{equation}\label{e.local}
\begin{split}
\lambda^p \mu(B)
\le C(\beta,p,c_\mu)\biggl[\frac{(\lambda 2^{k})^p}{2^{k\alpha}} \mu(U^{2^k \lambda}_{B})+
\frac{K}{k^p} \sum_{j=k}^{2k-1}  (\lambda 2^{j})^p \mu(U_B^{2^j \lambda})
+ K\int_{B\setminus U^{4^k\lambda}} g^p\,d\mu\biggl]
\end{split}
\end{equation}
holds for every $B\in\mathcal{S}_\lambda(Q)$.
To this end, let us fix a ball $B\in\mathcal{S}_\lambda(Q)$.

If $\mu(U_B^{2^k\lambda})\ge \mu(B)/2$, then
\[
\lambda^p \mu(B) \le 2\lambda^p \mu(U_B^{2^k\lambda})
=2\frac{(\lambda 2^k)^p}{2^{kp}}\mu(U_B^{2^k\lambda})
\le 2\frac{(\lambda 2^k)^p}{2^{k \alpha}}\mu(U_B^{2^k\lambda})\,,
\]
which suffices for the required local estimate \eqref{e.local}.
Let us then consider the more difficult case $\mu(U_B^{2^k\lambda}) < \mu(B)/2$.
In this case, by the stopping inequality \eqref{e.loc_stop},
\begin{align*}
\lambda^p\mu(B)&\le \frac{\mathbf{1}_{E_Q}(x_B)}{\mathrm{diam}(B)^{\beta p}}\int_{B} \lvert u(x)\rvert^p\,d\mu(x)
\\&= \frac{\mathbf{1}_{E_Q}(x_B)}{\mathrm{diam}(B)^{\beta p}}\int_{X} \Bigl( \mathbf{1}_{B\setminus U^{2^k\lambda}}(x)+
\mathbf{1}_{U^{2^k\lambda}_{B}}(x)\Bigr)\lvert u(x)\rvert^p\,d\mu(x)\,.
\end{align*}
By Lemma
\ref{l.dyadic} it suffices to estimate 
the integral over the set
$B\setminus U^{2^k\lambda}=B\setminus U^{2^k\lambda}_B$;
observe that the measure of this set is strictly positive.
We remark that the local boundary Poincar\'e
inequality in Lemma \ref{l.b_poincare} will be used to estimate this integral.

Fix a number $i\in\N$.
Since $B\subset Q^*$, it follows from Lemma \ref{l.arm_local} that the restriction $u|_{E_Q\cup (B\setminus U^{2^i \lambda})}$ is a 
$\beta$-H\"older function with a constant $\kappa_i=C(\beta,c_\mu)2^i\lambda$. 
We can now use the McShane extension \eqref{McShane} and extend 
$u|_{E_Q\cup (B\setminus U^{2^i \lambda})}$
to a function $u_{2^i \lambda}\colon X\to \R$
that is $\beta$-H\"older with the constant $\kappa_i$
and satisfies the restriction identity 
\[
u_{2^i \lambda}(x)=u(x)
\]
for all $x\in E_Q\cup (B\setminus U^{2^i \lambda})$.
Observe that $u_{2^i \lambda}=0$  in $E_Q$, since $u=0$  in $E_Q$. 

The crucial idea that was originally used by Keith--Zhong in \cite{MR2415381} is to consider the function
\[h(x)=\frac{1}{k} \sum_{i=k}^{2k-1} u_{2^i \lambda}(x)\,,\qquad x\in X\,.\]
We want to apply Lemma \ref{l.glueing}. In order to do so,  observe that $u_{2^i \lambda}|_{X\setminus A}=u|_{X\setminus A}$,
where \[A=X\setminus (B\setminus U^{2^i\lambda})=X\setminus (B\setminus U_B^{2^i\lambda})=(X\setminus B)\cup U^{2^i\lambda}_B\,.\]
Therefore, by Lemma \ref{l.glueing}
and properties (D1)--(D2), we obtain that 
\[
g_h=\frac{1}{k}\sum_{i=k}^{2k-1} \Bigl( \kappa_i  \mathbf{1}_{ (X\setminus B)\cup U^{2^i\lambda}_B\ed}
+ g\mathbf{1}_{B\setminus U^{2^i\lambda}}\Bigr)\in\mathcal{D}_H^{\beta}(h)\,.
\]
Observe that $U^{2^k\lambda}_B\supset U^{2^{(k+1)}\lambda}_B
\supset \dotsb \supset U^{2^{(2k-1)}\lambda}_B\supset U^{4^{k}\lambda}_B$.
By using these inclusions it is straightforward to show that the following
pointwise estimates are valid in $X$,
\begin{equation*}\label{e.hardy}
\begin{split}
\mathbf{1}_Bg_h^p &\le \bigg( \frac{1}{k}\sum_{i=k}^{2k-1} \Bigl(   \kappa_i\,\mathbf{1}_{U_B^{2^i\lambda}}
+  g \mathbf{1}_{B\setminus U^{2^i\lambda}}\Bigr)\bigg)^p\\
&\le 2^{p}\bigg(\frac{1}{k}\sum_{i=k}^{2k-1} \kappa_i\, \mathbf{1}_{U_B^{2^i \lambda}}\bigg)^p
+  2^{p} g^p \mathbf{1}_{B\setminus U^{4^k\lambda}}\\
&\le \frac{C(\beta,p,c_\mu)}{k^p} \sum_{j=k}^{2k-1} \bigg(\sum_{i=k}^j  2^i \lambda\bigg)^p  \mathbf{1}_{U_B^{2^j \lambda}}
+  2^p g^p \mathbf{1}_{B\setminus U^{4^k\lambda}}\\
&\le \frac{C(\beta,p,c_\mu)}{k^p} \sum_{j=k}^{2k-1}  (\lambda 2^{j})^p  \mathbf{1}_{U_B^{2^j \lambda}}
+  2^p g^p \mathbf{1}_{B\setminus U^{4^k\lambda}}\,.
\end{split}
\end{equation*}
Observe that $h\in \Lip_\beta(X)$ is zero in $E_Q$ and $h$ coincides with $u$ on $B\setminus U^{2^k\lambda}$, 
and recall that $g_h\in\mathcal{D}_H^{\beta}(h)$. Notice also that $B\subset Q^*$ and $x_B\in E_Q$.
The local boundary  Poincar\'e inequality in Lemma \ref{l.b_poincare} implies 
that
\begin{align*}
&\frac{\mathbf{1}_{E_Q}(x_B)}{\mathrm{diam}(B)^{\beta p}}\int_{B\setminus U^{2^k\lambda}}\lvert u(x)\rvert^p\,d\mu(x)
=\frac{\mathbf{1}_{E_Q}(x_B)}{\mathrm{diam}(B)^{\beta p}}\int_{B} \lvert h(x)\rvert^p\,d\mu(x)
\\&\le K\int_{B} g_h(x)^p \,d\mu(x)
\\&\le \frac{C(\beta,p,c_\mu) K}{k^p} \sum_{j=k}^{2k-1}  (\lambda 2^{j})^p \mu(U_B^{2^j \lambda})
+2^p K\int_{B\setminus U^{4^k\lambda}} g(x)^p\,d\mu(x)\,.
\end{align*}
The desired local inequality \eqref{e.local} follows by combining the estimates above.
\end{proof}

\subsection{Completing proof of Theorem \ref{t.main_local}}\label{ss.main_local}

We complete the proof as in \cite{MR3895752}.
Recall that  $u\colon X\to \R$ is a $\beta$-H\"older function with
$u=0$ in $E_Q$  and
 that \[
 M^{\sharp} u+M^{E_Q}u=M^{\sharp,p}_{\beta,\mathcal{B}_0}u+M^{E_Q,p}_{\beta,\mathcal{B}_0}u\,. 
 \]
Let us fix a function $g\in\mathcal{D}_H^{\beta}(u)$.
Observe that the left-hand side of 
inequality \eqref{e.loc_des} is finite. Without loss of generality, we may further assume
that it is nonzero.
By Lemma \ref{l.big_t_o_small_ball},
\begin{align*}
&\int_{B_0} \big(M^{\sharp} u(x)+M^{E_Q}u(x)\big)^{p-\varepsilon}\,d\mu(x)
\\&\qquad \le   C(p,c_\mu)\ed\int_{B_0} \big(M^{E_Q}_{Q} u(x)\big)^{p-\varepsilon}\,d\mu(x) +  C(\beta,p,c_\mu)\ed \int_{B_0} g(x)^{p-\varepsilon}(x) \,d\mu(x)\,.
\end{align*}
We have
\begin{align*}
\int_{B_0} \big(M^{E_Q}_{Q} u(x)\big)^{p-\varepsilon}\,d\mu(x) =
\int_{Q^*} \big(M^{E_Q}_{Q} u(x)\big)^{p-\varepsilon}\,d\mu(x)= 
(p-\varepsilon)\int_0^\infty \lambda^{p-\varepsilon} \mu(Q^\lambda)\,\frac{d\lambda}{\lambda}\,.
\end{align*}
Since $Q^\lambda=Q^*=Q^{2\lambda}$ for every $\lambda \in (0,\lambda_Q/2)$, 
we find that
\begin{align*}
(p-\varepsilon)\int_0^{\lambda_Q/2} \lambda^{p-\varepsilon} \mu(Q^\lambda)\,\frac{d\lambda}{\lambda}&=\frac{(p-\varepsilon)}{2^{p-\varepsilon}}\int_0^{\lambda_Q/2} (2\lambda)^{p-\varepsilon} \mu(Q^{2\lambda})\,\frac{d\lambda}{\lambda}\\
&\le \frac{(p-\varepsilon)}{2^{p-\varepsilon}}\int_0^{\infty} \sigma^{p-\varepsilon} \mu(Q^{\sigma})\,\frac{d\sigma}{\sigma}
\\&=\frac{1}{2^{p-\varepsilon}}\int_{Q^*} \big(M^{E_Q}_{Q} u(x)\big)^{p-\varepsilon}\,d\mu(x)\,.
\end{align*}
On the other hand, by Lemma \ref{l.mainl_local}, for each $\lambda>\lambda_Q/2$,
\begin{align*}
\lambda^{p-\varepsilon} \mu(Q^\lambda)
\le C(\beta,p,c_\mu)\lambda^{-\varepsilon}\biggl[\frac{(\lambda 2^{k})^p}{2^{k\alpha}} \mu(U^{2^k \lambda})+ \frac{K}{k^p} \sum_{j=k}^{2k-1}  (\lambda 2^{j})^p \mu(U^{2^j \lambda})
+K\int_{U^{\lambda}\setminus U^{4^k\lambda}} g^p\,d\mu\,\biggr].
\end{align*}
Since $p-\varepsilon>1$, it follows that
\begin{align*}
\int_{Q^*} \big( M^{E_Q}_{Q} u(x)\big)^{p-\varepsilon}\,d\mu(x)
&\le 2(p-\varepsilon)\int_{\lambda_Q/2}^\infty \lambda^{p-\varepsilon} \mu(Q^\lambda)\,\frac{d\lambda}{\lambda}
\\&\le C(\beta,p,c_\mu)(I_1(Q) + I_2(Q) + I_3(Q))\,,
\end{align*}
where
\begin{align*}
I_1(Q)&=\frac{2^{k\varepsilon}}{2^{k\alpha}}\int_{0}^\infty (\lambda 2^{k})^{p-\varepsilon} \mu(U^{2^k \lambda})  \,\frac{d\lambda}{\lambda}\,,\qquad \\
I_2(Q)&= \frac{K}{k^p}\sum_{j=k}^{2k-1} 2^{j\varepsilon}\int_0^\infty (2^j \lambda)^{p-\varepsilon} \mu(U^{2^j \lambda})\,\frac{d\lambda}{\lambda}\,, \\
I_3(Q)&= K
\int_0^\infty \lambda^{-\varepsilon} \int_{U^{\lambda}\setminus U^{4^k\lambda}} g(x)^p\,d\mu(x)\,\frac{d\lambda}{\lambda}\,.
\end{align*}
We estimate these three terms separately. First,
\begin{align*}
I_1(Q)
&\le \frac{2^{k(\varepsilon-\alpha)}}{p-\varepsilon}\int_{B_0} \big(M^{\sharp} u(x)+M^{E_Q}u(x)\big)^{p-\varepsilon}\,d\mu(x)
\\&\le 2^{k(\varepsilon-\alpha)} \int_{B_0} \big(M^{\sharp} u(x)+M^{E_Q}u(x)\big)^{p-\varepsilon}\,d\mu(x)\,.
\end{align*}
Second, 
\begin{align*}
I_2(Q)
&\le \frac{K}{k^p}\sum_{j=k}^{2k-1}  2^{j\varepsilon}\int_0^\infty (2^j \lambda)^{p-\varepsilon} \mu(U^{2^j \lambda})\,\frac{d\lambda}{\lambda}\\
&\le \frac{K}{k^p(p-\varepsilon)}\bigg(\sum_{j=k}^{2k-1} 2^{j\varepsilon}\bigg)\int_{B_0} \big(M^{\sharp} u(x)+M^{E_Q}u(x)\big)^{p-\varepsilon}\,d\mu
\\
&\le  \frac{K4^{k\varepsilon}}{k^{p-1}}\int_{B_0} \big(M^{\sharp} u(x)+M^{E_Q}u(x)\big)^{p-\varepsilon}\,d\mu\,.
\end{align*}
Third, by Fubini's theorem,
\begin{align*}
I_3(Q)&\le K\int_{B_0\setminus \{M^{\sharp} u+M^{E_Q}u=0\}} \bigg(   \int_0^\infty \lambda^{-\varepsilon}  \mathbf{1}_{U^{\lambda}\setminus U^{4^k\lambda}}(x)  \frac{d\lambda}{\lambda}  \bigg)g(x)^p\,d\mu(x)\\
& \le  C(k,\varepsilon)  K\int_{B_0\setminus \{M^{\sharp} u+M^{E_Q}u=0\}}g(x)^p (M^{\sharp} u(x)+M^{E_Q}u(x))^{-\varepsilon}\,d\mu(x)\,.\end{align*}
Combining the estimates above, we arrive at the desired conclusion.
\qed

\section{Local Hardy inequalities}\label{s.local_hardy}

We apply Theorem \ref{t.main_boundedness} in order to obtain a local Hardy
inequality, see  \eqref{e.int} in Theorem \ref{t.wannebo_local}. This inequality is then shown  to be self-improving, see Theorem \ref{t.improved}, and 
  in this respect we follow the strategy in \cite{MR3673660}. 
However, we remark that the easier Wannebo approach \cite{MR1010807} for establishing local Hardy inequalities as in \cite{MR3673660}
is not available to us, due to absence of pointwise Leibniz and chain rules
in the setting of Haj{\l}asz gradients.

\begin{theorem}\label{t.wannebo_local}
Let $X$ be a geodesic space. Let
$1<p<\infty$ and $0<\beta\le 1$. Let $E\subset X$ be a closed
set which satisfies the $(\beta,p)$-capacity density condition with a constant $c_0$.
Let $B_0=B(w,R)$ be a ball with $w\in E$ and $R<\diam(E)$.
Let  $E_Q$ be the truncation
of $E$ to the Whitney-type ball $Q$ as in Section \ref{s.truncation}.
Then  there exists a constant $C=C(\beta,p,c_\mu,c_0)$ such that
\begin{equation}\label{e.int}
\int_{B(w,R)\setminus E_Q}\frac{\lvert u(x)\rvert^p}{d(x,E_Q)^{\beta p}}\,d\mu(x) 
\le C\int_{B(w,R)} g(x)^p\,d\mu(x)
\end{equation}
holds whenever $u\in\mathrm{Lip}_\beta(X)$ is such that $u=0$ in $E_Q$ and $g\in\mathcal{D}_H^{\beta}(u)$.
\end{theorem}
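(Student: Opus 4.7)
The plan is to reduce the local Hardy inequality \eqref{e.int} to the maximal boundary Poincar\'e inequality of Theorem~\ref{t.main_boundedness} by means of the pointwise comparison \eqref{e.des_sec} from Lemma~\ref{l.arm_local}. Since \eqref{e.des_sec} is available only for points of the Whitney-type ball $Q^{*}$, I would split the integral in \eqref{e.int} into its contributions over $Q^{*}\setminus E_{Q}$ and over the outer annulus $B(w,R)\setminus Q^{*}$, and treat each piece separately.

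For the inner region, Lemma~\ref{l.arm_local} yields
\[
\lvert u(x)\rvert\le C(\beta,c_\mu)\,d(x,E_{Q})^{\beta}\bigl(M^{\sharp,p}_{\beta,\mathcal{B}_{0}}u(x)+M^{E_{Q},p}_{\beta,\mathcal{B}_{0}}u(x)\bigr)\qquad (x\in Q^{*}).
\]
Raising to the $p$-th power, dividing by $d(x,E_{Q})^{\beta p}$, and integrating over $Q^{*}\setminus E_{Q}$ reduces this piece of \eqref{e.int} to the $L^{p}(B_{0})$-norm of $M^{\sharp,p}_{\beta,\mathcal{B}_{0}}u+M^{E_{Q},p}_{\beta,\mathcal{B}_{0}}u$, which Theorem~\ref{t.main_boundedness} bounds by $C\int_{B_{0}}g^{p}\,d\mu$.

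For the outer annulus, the pointwise estimate \eqref{e.des_sec} is not available, so I would argue more crudely. Since $E_{Q}\subset\iol{Q}=\iol{B(w,r_{Q})}$ with $r_{Q}=R/128$, the elementary bound $d(x,E_{Q})\ge d(x,w)-r_{Q}\ge 3r_{Q}$ holds for every $x\in B(w,R)\setminus Q^{*}$, so $d(x,E_{Q})^{-\beta p}\le CR^{-\beta p}$ there. It therefore suffices to prove $\int_{B_{0}}\lvert u\rvert^{p}\,d\mu\le CR^{\beta p}\int_{B_{0}}g^{p}\,d\mu$. To obtain this, I would exploit that the ball $B_{0}=B(w,R)$ itself lies in $\mathcal{B}_{0}$ and is centered at $w\in E_{Q}$ by Lemma~\ref{l.truncation}(a), so testing the definition of $M^{E_{Q},p}_{\beta,\mathcal{B}_{0}}u$ against this single ball gives the uniform pointwise lower bound
\[
\bigl(M^{E_{Q},p}_{\beta,\mathcal{B}_{0}}u(x)\bigr)^{p}\ge \frac{1}{\diam(B_{0})^{\beta p}}\vint_{B_{0}}\lvert u\rvert^{p}\,d\mu\qquad (x\in B_{0}).
\]
Integrating this inequality over $B_{0}$ and invoking Theorem~\ref{t.main_boundedness} once more produces $\int_{B_{0}}\lvert u\rvert^{p}\,d\mu\le C\diam(B_{0})^{\beta p}\int_{B_{0}}g^{p}\,d\mu\le CR^{\beta p}\int_{B_{0}}g^{p}\,d\mu$, which closes the estimate on the outer annulus.

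The main obstacle I expect is precisely this outer region: Lemma~\ref{l.arm_local} relies on the Whitney-type property (W4) and hence provides the pointwise Hardy-type bound only on $Q^{*}$, while the integral in \eqref{e.int} lives on the much larger ball $B_{0}$. The device that bypasses this difficulty is the observation that the distinguished center $w$ lies in $E_{Q}$ and that $B_{0}$ is itself an admissible test ball in the definition of $M^{E_{Q},p}_{\beta,\mathcal{B}_{0}}u$, which upgrades the maximal inequality of Theorem~\ref{t.main_boundedness} to a uniform $L^{p}$-control on the whole of $B_{0}$ without invoking any further machinery.
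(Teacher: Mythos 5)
Your proposal is correct, and the overall skeleton coincides with the paper's: both split the integral into the inner piece over $Q^*\setminus E_Q$, handled by combining the pointwise estimate \eqref{e.des_sec} of Lemma~\ref{l.arm_local} with Theorem~\ref{t.main_boundedness}, and the outer annulus $B(w,R)\setminus Q^*$, where $d(x,E_Q)\ge 3r_Q>R/64$ reduces matters to bounding $R^{-\beta p}\int_{B_0}\lvert u\rvert^p\,d\mu$. Where you genuinely diverge is in how that last quantity is controlled. The paper proceeds by a three-term decomposition $\lvert u\rvert^p\lesssim\lvert u-u_{B_0}\rvert^p+\lvert u_{B_0}-u_{Q^*}\rvert^p+\lvert u_{Q^*}\rvert^p$, estimates the first two terms by the $(\beta,p,p)$-Poincar\'e inequality, and bounds the third by feeding it back into the already-established inner estimate \eqref{e.loc_hard}. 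You instead observe that $B_0$ itself belongs to $\mathcal{B}_0$ and is centered at $w\in E_Q$ (Lemma~\ref{l.truncation}(a)), so that $\bigl(M^{E_Q,p}_{\beta,\mathcal{B}_0}u\bigr)^p\ge\diam(B_0)^{-\beta p}\vint_{B_0}\lvert u\rvert^p\,d\mu$ pointwise on $B_0$; integrating and invoking \eqref{e.loc_des_C} yields $\int_{B_0}\lvert u\rvert^p\,d\mu\le C\,R^{\beta p}\int_{B_0}g^p\,d\mu$ in one stroke. This is a legitimate and in fact cleaner argument: it avoids both the Poincar\'e inequalities and the self-referential use of the inner estimate, at the cost of nothing, since the maximal inequality of Theorem~\ref{t.main_boundedness} is being used anyway. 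The only point worth making explicit in a write-up is that the ball $B_0$ carries the designated center $w$ (so $\mathbf{1}_{E_Q}(x_{B_0})=1$ in the definition \eqref{d.m_e_def}) and that $\diam(B_0)\le 2R$, which supplies the constant.
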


\begin{proof}
Let $u\in\mathrm{Lip}_\beta(X)$ be such that $u=0$ in $E_Q$ and let $g\in\mathcal{D}_H^{\beta}(u)$.
 Lemma \ref{l.arm_local} implies that
\[
\lvert u(x)\rvert \le 
C(\beta,c_\mu)d(x,E_Q)^\beta \left(M^{\sharp,p}_{\beta,\mathcal{B}_0}u(x)+M^{E_Q,p}_{\beta,\mathcal{B}_0}u(x)\right)
\]
for all $x\in Q^*$. Therefore
\[
\int_{Q^*\setminus E_Q} \frac{\lvert u(x)\rvert^p}{d(x,E_Q)^{\beta p}}\,d\mu(x) \le C(\beta,p,c_\mu)
\int_{B(w,R)} \left(M^{\sharp,p}_{\beta,\mathcal{B}_0}u(x)+M^{E_Q,p}_{\beta,\mathcal{B}_0}u(x)\right)^p\,d\mu(x)\,.
\]
By Theorem \ref{t.main_boundedness}, we obtain
\begin{equation}\label{e.loc_hard}
\int_{Q^*\setminus E_Q} \frac{\lvert u(x)\rvert^p}{d(x,E_Q)^{\beta p}}\,d\mu(x)
\le C(\beta,p,c_\mu,c_0)\int_{B(w,R)} g(x)^p\,d\mu(x)\,.
\end{equation}
 It remains to bound the integral over $B(w,R) \setminus Q^*$.  Since $E_Q\subset \iol{Q}$ and $Q^*=4Q$, we have $d(x,E_Q)\ge 3r_Q>R/64$
for all $x\in B(w,R)\setminus Q^*$. Thus, we obtain
\begin{align*}
&\int_{B(w,R)\setminus Q^*} \frac{\lvert u(x)\rvert^p}{d(x,E_Q)^{\beta p}}\,d\mu(x)
\le \frac{64^{\beta p}}{R^{\beta p}}\int_{B(w,R)}\lvert u(x)\rvert^p\,d\mu(x)\\&\le 
 \frac{3^p64^{\beta p}}{R^{\beta p}}\left(\int_{B(w,R)} \lvert u(x)-u_{B(w,R)}\rvert^p\,d\mu(x)
+
\mu(B(w,R))\lvert u_{B(w,R)}-u_{Q^*}\rvert^p + \mu(B(w,R))\lvert u_{Q^*}\rvert^p\right)\,.
\end{align*}
By the $(\beta,p,p)$-Poincar\'e inequality in Lemma \ref{t.pp_poincare}, 
\begin{align*}
&\int_{B(w,R)} \lvert u(x)-u_{B(w,R)}\rvert^p\,d\mu(x)\\&\qquad \le 2^p\diam(B(w,R))^{\beta p}\int_{B(w,R)} g(x)^p\,d\mu(x)
\le C(p) R^{\beta p}\int_{B(w,R)} g(x)^p\,d\mu(x)\,.
\end{align*}
For the second term, we have
\begin{align*}
&\mu(B(w,R))\lvert u_{B(w,R)}-u_{Q^*}\rvert^p \\&\qquad \le \mu(B(w,R))\vint_{Q^*} \lvert u(x)-u_{B(w,R)}\rvert^p\,d\mu(x)
\\&\qquad \le C(c_\mu)\int_{B(w,R)} \lvert u(x)-u_{B(w,R)}\rvert^p\,d\mu(x)\le C(p,c_\mu)R^{\beta p}\int_{B(w,R)} g(x)^p\,d\mu(x)\,.
\end{align*}
For the third term, we have $d(x,E_Q)\le d(x,w)< 4r_Q<R$ for every $x\in Q^*$. Thus
\begin{align*}
\mu(B(w,R))\lvert u_{Q^*}\rvert^p\le C(c_\mu)\int_{Q^*\setminus E_Q} \lvert u(x)\rvert^p\,d\mu(x)
\le R^{\beta p}\int_{Q^*\setminus E_Q} \frac{\lvert u(x)\rvert^p}{d(x,E_Q)^{\beta p}}\,d\mu(x)\,.
\end{align*}
Applying inequality \eqref{e.loc_hard}, we get
\[
\mu(B(w,R))\lvert u_{Q^*}\rvert^p\le  C(\beta,p,c_\mu,c_0)R^{\beta p}\int_{B(w,R)} g(x)^p\,d\mu(x)\,.
\]
The desired inequality follows by combining the estimates above.
\end{proof}

Next we improve the local Hardy inequality 
in Theorem~\ref{t.wannebo_local}. This is done
by adapting the Koskela--Zhong  truncation argument from \cite{KoskelaZhong2003} to the setting
of Haj{\l}asz gradients; see also
\cite{MR3673660} and \cite[Theorem 7.32]{KLV2021} whose proof we modify to our purposes. 
 
\begin{theorem}\label{t.improved}
Let $X$ be a geodesic space. Let
$1<p<\infty$ and $0<\beta\le 1$. Let $E\subset X$ be a closed
set which satisfies the $(\beta,p)$-capacity density condition with a constant $c_0$.
Let $B_0=B(w,R)$ be a ball with $w\in E$ and $R<\diam(E)$.
Let  $E_Q$ be the truncation
of $E$ to the Whitney-type ball $Q$ as in Section \ref{s.truncation},
and let $C_1=C_1(\beta,p,c_\mu,c_0)$ be the constant in~\eqref{e.int}, see Theorem~{\textup{\ref{t.wannebo_local}}}.
Then there exist $0< \varepsilon=\varepsilon(p,C_1)<p-1$
and $C= C(p,C_1)$ such that inequality
\begin{equation}\label{e.himproved}
\int_{B(w,R)\setminus E_Q} \frac{\lvert u(x)\rvert^{p-\varepsilon}}{d(x,E_Q)^{\beta(p-\varepsilon)}}\,d\mu(x)
\le C \int_{B(w,R)} g(x)^{p-\varepsilon}\,d\mu(x)
\end{equation}
holds whenever $u\in\mathrm{Lip}_\beta(X)$ is such that $u=0$ in $E_Q$ and $g\in\mathcal{D}_H^{\beta}(u)$.
\end{theorem}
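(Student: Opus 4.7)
The plan is to adapt the Koskela--Zhong truncation argument from \cite{KoskelaZhong2003} and \cite[Theorem~7.32]{KLV2021} to the Haj{\l}asz setting. We may assume $u\ge 0$, since $|u|$ has Haj{\l}asz gradient $g$ by property~(D3) applied to the Lipschitz function $s\mapsto |s|$. Fix $g\in\mathcal{D}_H^\beta(u)$ with $u=0$ on $E_Q$, and for each level $\lambda>0$ introduce the truncation
\[
u_\lambda \;:=\; \min\{\max\{u-\lambda,0\},\lambda\},
\]
which vanishes on $\{u\le\lambda\}\supset E_Q$, equals $\lambda$ on $\{u\ge 2\lambda\}$, and agrees with $u-\lambda$ on $\{\lambda<u<2\lambda\}$.

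The first step is to construct a Haj{\l}asz $\beta$-gradient of $u_\lambda$ that is as localized as possible. Applying Lemma~\ref{l.glueing} with $v=u_\lambda$ and reference function $u-\lambda$ (which has the same Haj{\l}asz gradient $g$ as $u$), one obtains, for any upper bound $\kappa_\lambda$ on the Hölder constant of $u_\lambda$ on $X$,
\[
g_\lambda \;:=\; \kappa_\lambda\,\mathbf{1}_{\{u\le\lambda\}\cup\{u\ge 2\lambda\}} + g\,\mathbf{1}_{\{\lambda<u<2\lambda\}} \;\in\; \mathcal{D}_H^\beta(u_\lambda).
\]
Feeding $(u_\lambda,g_\lambda)$ into Theorem~\ref{t.wannebo_local} yields the scale-$\lambda$ Hardy estimate
\[
\int_{B(w,R)}\frac{u_\lambda^{p}}{d(\cdot,E_Q)^{\beta p}}\,d\mu \;\le\; C\kappa_\lambda^{p}\mu(B(w,R)) + C\!\int_{\{\lambda<u<2\lambda\}\cap B(w,R)} g^{p}\,d\mu.
\]

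Next I would multiply through by $\lambda^{-\varepsilon-1}$ and integrate over $\lambda\in(0,\infty)$, or equivalently sum over dyadic levels $\lambda=2^{j}$. By Fubini this produces, up to universal constants depending on $p$ and $\varepsilon$,
\[
\int_{B(w,R)\setminus E_Q}\frac{u^{p-\varepsilon}}{d(\cdot,E_Q)^{\beta p}}\,d\mu \;\le\; C\int_{B(w,R)} g^{p}u^{-\varepsilon}\,d\mu \;+\; C\!\int_0^\infty \lambda^{-\varepsilon-1}\kappa_\lambda^{p}\mu(B(w,R))\,d\lambda.
\]
The desired inequality \eqref{e.himproved} follows in two further manoeuvres: the pointwise bound $u\le Cd(\cdot,E_Q)^{\beta}(M^{\sharp}u+M^{E_Q}u)$ from Lemma~\ref{l.arm_local} supplies a factor $d^{\beta\varepsilon}$ that turns $d^{-\beta p}$ into the target $d^{-\beta(p-\varepsilon)}$; and a Young-type split $g^{p}u^{-\varepsilon}\le \delta(u/d^{\beta})^{p-\varepsilon}+C_\delta g^{p-\varepsilon}$, combined with the global maximal bound from Theorem~\ref{t.main_boundedness}, converts the right-hand side to a multiple of $\int g^{p-\varepsilon}\,d\mu$ plus a term absorbable into the left-hand side once $\varepsilon$ is small enough.

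The principal obstacle is the nonlocal contribution $\int_0^\infty \lambda^{-\varepsilon-1}\kappa_\lambda^{p}\mu(B(w,R))\,d\lambda$. It has no analogue in the $p$-weak upper gradient Wannebo-style proof of \cite{MR3673660}, where a pointwise chain rule furnishes a gradient $g\,\mathbf{1}_{\{\lambda<u<2\lambda\}}$ for $u_\lambda$; in the Haj{\l}asz setting the nonlocality forces the global constant $\kappa_\lambda$ on the region where $u_\lambda$ is locally constant. Handling this term requires restricting to the essentially finite range of $\lambda$ on which $u_\lambda$ is nontrivial inside $B(w,R)$ and then using Lemma~\ref{l.arm_local} together with the $L^{p}$-maximal bound of Theorem~\ref{t.main_boundedness} to control $\kappa_\lambda$ at each scale by scale-appropriate averages of $g$ over $B(w,R)$. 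Executing this absorption delicately, so that the resulting $\varepsilon$ and $C$ depend only on $p$ and $C_1$, is the main technical point and is precisely where the maximal-function machinery of Section~\ref{s.main} enters, explaining why the simpler Wannebo scheme used in \cite{MR3673660} is unavailable in the Haj{\l}asz setting.
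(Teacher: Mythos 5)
Your proposal correctly identifies the Koskela--Zhong framework and, to your credit, honestly isolates the point where it breaks: the global H\"older constant $\kappa_\lambda$ that Lemma~\ref{l.glueing} forces onto the set where $u_\lambda$ is locally constant. But the resolution you sketch does not close this gap, and I do not see how it could. The function $u_\lambda=\min\{\max\{u-\lambda,0\},\lambda\}$ is a range truncation of $u$, so the best admissible constant in Lemma~\ref{l.glueing} is the H\"older constant $\kappa$ of $u$ itself --- a quantity that is completely unrelated to $\lambda$, to $g$, and to the constants allowed in \eqref{e.himproved}. The resulting error term is $\kappa^p\mu(B(w,R))$ (note: the measure of the \emph{whole ball}, not of the transition layer $\{\lambda<u<2\lambda\}$), and $\int_0^{\infty}\lambda^{-1-\varepsilon}\kappa^p\mu(B(w,R))\,d\lambda$ diverges at $\lambda=0$ no matter how you restrict the range of $\lambda$ from above. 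Invoking Lemma~\ref{l.arm_local} and Theorem~\ref{t.main_boundedness} ``to control $\kappa_\lambda$ at each scale'' is not an argument: those results bound $u$ and its oscillation on sublevel sets of the maximal functions, not the global H\"older constant of a range truncation of $u$.

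The paper's proof avoids this by truncating in a different variable. For each $\lambda>0$ it restricts $u$ to the good set $F_\lambda=G_\lambda\cap H_\lambda$, where $G_\lambda=\{g\le\lambda\}$ and $H_\lambda=\{|u|\le\lambda\,d(\cdot,E_Q)^\beta\}$ inside $B(w,R)$. On $F_\lambda\cup E_Q$ the Haj{\l}asz inequality and the definition of $H_\lambda$ show directly that $u$ is $\beta$-H\"older with constant $2\lambda$; the McShane extension $v$ then has global constant $2\lambda$ \emph{by construction}, so Lemma~\ref{l.glueing} yields $g_v=g\mathbf{1}_{F_\lambda}+2\lambda\mathbf{1}_{X\setminus F_\lambda}$ and Theorem~\ref{t.wannebo_local} applied to $(v,g_v)$ produces the error term $C_1 2^p\lambda^p\mu(B(w,R)\setminus F_\lambda)$. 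That term is harmless: integrating $\lambda^{p-1-\varepsilon}\mu(B(w,R)\setminus H_\lambda)\,d\lambda$ by the layer-cake formula returns $\tfrac1{p-\varepsilon}\int(|u|/d(\cdot,E_Q)^\beta)^{p-\varepsilon}$, so after the left-hand side contributes its factor $1/\varepsilon$ the bad term carries the small coefficient $C_12^{p+1}\varepsilon/(p-\varepsilon)$ and is absorbed once $\varepsilon$ is small (the left-hand side being finite because $u$ is $\beta$-H\"older and vanishes on $E_Q$). If you want to repair your write-up, replace the range truncation $u_\lambda$ by this restriction-plus-McShane-extension construction; the rest of your Fubini computation then goes through essentially as in the paper, and neither Lemma~\ref{l.arm_local} nor Theorem~\ref{t.main_boundedness} is needed at this stage.
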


\begin{proof}
Without loss of generality, we may assume that $C_1\ge 1$ in \eqref{e.int}.
Let $u\in\mathrm{Lip}_\beta(X)$ be such that $u=0$ in $E_Q$ and let $g\in\mathcal{D}_H^{\beta}(u)$.
 Let $\kappa\ge 0$ be the $\beta$-H\"older constant of $u$ in $X$.
By redefining $g=\kappa$ in the exceptional set  $N=N(g)$ of measure zero, we may assume that
\eqref{e.hajlasz} holds for all $x,y\in X$. 
Let $\lambda>0$ and define $F_\lambda = G_\lambda \cap H_\lambda$, where 
\[
G_\lambda = \bigl\{x\in B(w,R) : g(x)\le \lambda\bigr\}
\]
and
\[
H_\lambda = \{x\in B(w,R) : \lvert u(x)\rvert \le \lambda d(x,E_Q)^\beta\}.
\]
We show that the restriction of $u$ to $F_\lambda\cup E_Q$ is
a $\beta$-H\"older function with a constant  $2\lambda$. Assume
that $x,y\in F_\lambda$. Then \eqref{e.hajlasz}  implies
\[
\lvert u(x)-u(y)\rvert\le d(x,y)^\beta\left(g(x)+g(y)\right)\le 2\lambda d(x,y)^\beta\,.
\]
On the other hand, if $x\in F_\lambda$ and $y\in E_Q$, then
\[
\lvert u(x)-u(y)\rvert =\lvert u(x)\rvert \le \lambda d(x,E_Q)^\beta\le  2\lambda d(x,y)^\beta\,.
\]
The case $x\in E_Q$ and $y\in F_\lambda$ is treated in the same way. If $x,y\in E_Q$, then $\lvert u(x)-u(y)\rvert=0$.
All in all, we see that $u$ is a $\beta$-H\"older function in $F_\lambda\cup E_Q$ with a constant $2\lambda$.

We apply the McShane extension~\ref{McShane} and extend the restriction $u\vert_{F_\lambda\cup E_Q}$
to a $\beta$-H\"older function function $v$ in $X$  with constant $2\lambda$.
Then $v=u=0$ in $E_Q$ and  $v=u$ in $F_\lambda$, thus
\[
g_v=  g\mathbf{1}_{F_\lambda}+2\lambda \mathbf{1}_{X\setminus F_\lambda} \in \mathcal{D}^\beta_H(v)
\]
by Lemma \ref{l.glueing}.

By applying Theorem~\ref{t.wannebo_local} to the
function $v$ and its Haj{\l}asz $\beta$-gradient $g_v$, we obtain 
\begin{align*}
\int_{(B(w,R)\setminus E_Q)\cap F_\lambda} \frac{\lvert u(x)\rvert^p}{d(x,E_Q)^{\beta p}}\,d\mu(x)
& \le \int_{B(w,R)\setminus E_Q} \frac{\lvert v(x)\rvert^p}{d(x,E_Q)^{\beta p}}\,d\mu(x)\\
&\le C_1\int_{F_\lambda} g(x)^p\,d\mu(x) +C_1 2^p\lambda^p \mu(B(w,R)\setminus F_\lambda)\,.
\end{align*}
Since $H_\lambda=F_\lambda\cup (H_\lambda\setminus G_\lambda)$ and $C_1\ge 1$, it follows that
\begin{equation}\label{e.nuva}
\begin{split}
&\int_{(B(w,R)\setminus E_Q)\cap H_\lambda} \frac{\lvert u(x)\rvert^p}{d(x,E_Q)^{\beta p}}\,d\mu(x)\\
& \qquad \le C_1 \int_{F_\lambda} g(x)^p\,d\mu(x)
 +C_1 2^p\lambda^p \mu( B(w,R)\setminus F_\lambda)\\
& \qquad\qquad+\int_{(H_\lambda\setminus E_Q)\setminus G_\lambda}\frac{\lvert u(x)\rvert^p}{d(x,E_Q)^{\beta p}}\,d\mu(x)\\
& \qquad \le C_1 \int_{G_\lambda} g(x)^p\,d\mu(x)
+C_1 2^{p} \lambda^p \bigl(\mu(B(w,R)\setminus F_\lambda)
+\mu( H_\lambda\setminus G_\lambda)\bigr)\\
& \qquad \le C_1 \int_{G_\lambda} g(x)^p\,d\mu(x)
+C_1 2^{p+1} \lambda^p \bigl(\mu( B(w,R)\setminus H_\lambda)+\mu( B(w,R)\setminus G_\lambda)\bigr).
\end{split}
\end{equation}
Here $\lambda>0$ was arbitrary, and thus we conclude that~\eqref{e.nuva} holds for every $\lambda>0$.

Next we multiply~\eqref{e.nuva} by $\lambda^{-1-\varepsilon}$,
where $0<\varepsilon<p-1$, and integrate with respect to $\lambda$ over the set $(0,\infty)$. 
With a change of the order of integration on the left-hand side, this gives
\begin{align*}
& \frac{1}{\eps} \int_{B(w,R)\setminus E_Q} 
\biggl(\frac{\lvert u(x)\rvert}{d(x,E_Q)^\beta}\biggr)^{p-\eps}\,d\mu(x)
 \le C_1 \int_0^\infty \lambda^{-1-\varepsilon}\int_{G_\lambda} g(x)^p\,d\mu(x)\,d\lambda \\
& \qquad + C_1 2^{p+1}\int_0^\infty \lambda^{p-1-\varepsilon} \bigl(\mu(B(w,R)\setminus H_\lambda)
+\mu(B(w,R)\setminus G_\lambda)\bigr)\,d\lambda\,.
\end{align*}
By the definition of $G_\lambda$, we find that the first term
on the right-hand side is dominated by
\[
\frac{C_1}{\eps}\int_{B(w,R)} g(x)^{p-\varepsilon}\,d\mu(x)\,.
\]
Using the definitions
of $H_\lambda$ and $G_\lambda$, the second term on the right-hand side 
can be estimated from above by
\[
\frac{C_1 2^{p+1}}{p-\varepsilon}\biggl(
\int_{B(w,R)\setminus E_Q} \biggl(\frac{\lvert u(x)\rvert}{d(x,E_Q)^\beta}\biggr)^{p-\eps}\,d\mu(x)
+ \int_{B(w,R)} g(x)^{p-\varepsilon}\,d\mu(x)\biggr).
\]

By combining the estimates above, we obtain
\begin{equation}\label{e.endial}
\begin{split}
&\int_{B(w,R) \setminus E_Q} \biggl(\frac{\lvert u(x)\rvert}{d(x,E_Q)^\beta}\biggr)^{p-\eps}\,d\mu(x)\\
&\qquad\le C_2\int_{B(w,R)\setminus E_Q} \biggl(\frac{\lvert u(x)\rvert}{d(x,E_Q)^\beta}\biggr)^{p-\eps}\,d\mu(x) + 
   C_3\int_{B(w,R)} g(x)^{p-\varepsilon}\,d\mu(x)\,,
   \end{split}
\end{equation}
where $C_2 = C_1 2^{p+1} \frac{\varepsilon} {p-\varepsilon}$ 
and $C_3 = C_1\bigl(1+2^{p+1}\frac{\varepsilon} {p-\varepsilon}\bigr)$. We choose
$0<\varepsilon=\varepsilon(C_1,p)<p-1$ so small that
\[
C_2 = C_1 2^{p+1} \frac{\varepsilon} {p-\varepsilon}<\frac{1}{2}\,.
\]
This allows us to absorb the first term in the right-hand side of \eqref{e.endial} to the left-hand side.
Observe that this term is finite, since $u$ is $\beta$-H\"older in $X$ and $u=0$ in $E_Q$.
\end{proof}

\section{Self-improvement of the capacity density condition}\label{s.big-results}

 As an application of Theorem \ref{t.improved}, we  strengthen Theorem \ref{t.easy_converse} in  complete geodesic spaces.
This leads to the conclusion that the Haj{\l}asz capacity density condition 
is self-improving or  doubly open-ended in such spaces.
In fact, we characterize the Haj{\l}asz capacity density condition in various geometrical and 
analytical quantities, the latter of which are all shown to be doubly open-ended.

\begin{theorem}\label{t.converse}
Let $X$ be a geodesic space. Let
$1<p<\infty$ and $0<\beta\le 1$. Let $E\subset X$ be a closed 
set which satisfies the $(\beta,p)$-capacity density condition with a constant $c_0$.
 Then
there exists $\varepsilon>0$, depending
on $\beta$, $p$, $c_\mu$ and $c_0$, such that
$\ucodima(E)\le \beta(p-\varepsilon)$.
\end{theorem}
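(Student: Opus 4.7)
\textbf{Proof plan for Theorem \ref{t.converse}.} My plan is to mirror the argument of Theorem \ref{t.easy_converse}, replacing the boundary Poincar\'e inequality by the self-improved local Hardy inequality from Theorem \ref{t.improved}. Fix $w\in E$ and $0<r<R$. After a standard doubling-based reduction (enlarging or shrinking $R$ by a bounded factor) it suffices to prove the codimension estimate
\[
\frac{\mu(E_r\cap B(w,R))}{\mu(B(w,R))}\ge c\Bigl(\frac{r}{R}\Bigr)^{\beta(p-\varepsilon)}
\]
when $0<r<R<\diam(E)/4$, and in that case we may further assume the nontrivial alternative $\mu(B(w,R)\setminus E_r)\ge\tfrac12\mu(B(w,R))$, since otherwise $\mu(E_r\cap B(w,R))\ge\tfrac12\mu(B(w,R))$ already.

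The test function is the same as in Theorem \ref{t.easy_converse}: set
\[
u(x)=\min\{1,r^{-\beta}d(x,E)^\beta\},\qquad g=r^{-\beta}\mathbf{1}_{E_r}.
\]
Then $u\in\Lip_\beta(X)$ with constant $r^{-\beta}$, $u=0$ on $E$, $u=1$ on $X\setminus E_r$, and Lemma~\ref{l.glueing} gives $g\in\mathcal{D}_H^{\beta}(u)$, exactly as observed at the end of the proof of Theorem \ref{t.easy_converse}. Now apply Theorem \ref{t.improved} with $B_0=B(w,R)$: since $E_Q\subset E$ by Lemma~\ref{l.truncation}(b), we still have $u=0$ on $E_Q$, so there exist $\varepsilon=\varepsilon(\beta,p,c_\mu,c_0)\in(0,p-1)$ and $C=C(\beta,p,c_\mu,c_0)$ with
\[
\int_{B(w,R)\setminus E_Q}\frac{\lvert u(x)\rvert^{p-\varepsilon}}{d(x,E_Q)^{\beta(p-\varepsilon)}}\,d\mu(x)\le C\int_{B(w,R)}g(x)^{p-\varepsilon}\,d\mu(x)=Cr^{-\beta(p-\varepsilon)}\mu(E_r\cap B(w,R)).
\]

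The bridge back to $E$ is the following elementary pair of observations. First, Lemma~\ref{l.truncation}(a) gives $w\in E_Q$, hence $d(x,E_Q)\le d(x,w)<R$ for every $x\in B(w,R)$. Second, $E_Q\subset E\subset E_r$, so $B(w,R)\setminus E_r\subset B(w,R)\setminus E_Q$ and $\lvert u\rvert\equiv 1$ on that set. Combining these, the left-hand side dominates
\[
R^{-\beta(p-\varepsilon)}\mu(B(w,R)\setminus E_r)\ge \tfrac12 R^{-\beta(p-\varepsilon)}\mu(B(w,R)),
\]
which combined with the displayed Hardy estimate yields the required codimension inequality with exponent $\beta(p-\varepsilon)$ and with $c$ depending only on $\beta,p,c_\mu,c_0$. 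Together with the bounded-scale adjustment at $R\ge \diam(E)/4$, this gives $\ucodima(E)\le \beta(p-\varepsilon)$.

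\textbf{Expected obstacle.} All the analytic work is already packaged inside Theorem~\ref{t.improved}; what requires care is the \emph{geometric} transition between the truncated set $E_Q$ (on which the Hardy inequality is formulated) and the original set $E$ (on which the codimension estimate is required). The crucial point is that this transition is essentially free in both directions: $E_Q\subset E$ guarantees that test functions vanishing on $E$ are admissible, and $w\in E_Q$ makes $d(\cdot,E_Q)$ comparable to $R$ on the target ball, thereby giving a lower bound on the left-hand side of the Hardy inequality without any loss of exponent. Once this is noted, the argument is a direct and short adaptation of Theorem \ref{t.easy_converse}.
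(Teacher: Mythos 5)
Your argument is correct and follows essentially the same route as the paper: apply Theorem \ref{t.improved} to a truncated-distance test function whose Haj{\l}asz gradient comes from Lemma \ref{l.glueing}, bound the left-hand side from below using $w\in E_Q$ (so that $d(\cdot,E_Q)<R$ on $B(w,R)$) together with the measure dichotomy, and read off the codimension estimate. The only (harmless) difference is that the paper builds the test function from $d(\cdot,E_Q)$ and transfers to $E_r$ at the end via $E_{Q,r}\subset E_r$, whereas you build it from $d(\cdot,E)$ and use $E_Q\subset E\subset E_r$ up front; both variants are valid.
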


\begin{proof}
Let $w\in E$ and $0<r<R<\diam(E)$. 
Let  $E_Q$ be the truncation
of $E$ to the  ball $Q\subset B_0=B(w,R)$ as in Section \ref{s.truncation}.
Let $\varepsilon>0$ be as in Theorem \ref{t.improved}.
Observe that
\[E_{Q,r}=\{x\in X\,:\,d(x,E_Q)<r\}\subset \{x\in X\,:\, d(x,E)<r\}=E_r\,.\]
Hence, it suffices to show that
\begin{equation}\label{eq.codim_est}
\frac{\mu(E_{Q,r}\cap B(w,R))}{\mu(B(w,R))}\ge c\Bigl(\frac{r}{R}\Bigr)^{\beta(p-\varepsilon)},
\end{equation}
where the constant $c$ is independent of $w$, $r$ and $R$.

If $r\ge R/4$, then the claim is clear since $\bigl(\frac{r}{R}\bigr)^{\beta(p-\varepsilon)}\le 1$ and
\[
\mu(E_{Q,r}\cap B(w,R))\ge \mu(B(w,R/4))\ge C(\mu)\mu(B(w,R))\,.
\]
The claim is clear also if  $\mu(E_{Q,r}\cap B(w,R))\ge \frac 1 2 \mu(B(w,R))$.
Thus we may assume that $r<R/4$ and that
$\mu(E_{Q,r}\cap B(w,R)) < \tfrac 1 2 \mu(B(w,R))$, whence
\begin{equation}\label{eq.compl_meas}
\mu(B(w,R)\setminus E_{Q,r}) \ge \tfrac 1 2 \mu(B(w,R))>0.
\end{equation}

Let us now consider the  $\beta$-H\"older function $u\colon X\to\R$,
\[
u(x)=\min\{1,r^{-\beta}d(x,E_Q)^\beta\}\,,\qquad  x\in X.
\]
Then $u=0$ in $E_Q$, $u=1$ in $X\setminus E_{Q,r}$, and 
\[
\lvert u(x)-u(y)\rvert \le r^{-\beta}d(x,y)^\beta \quad \text{ for all } x,y\in X.
\]

 We aim to apply Theorem \ref{t.improved}.  Recall also that $w\in E_Q$. Thus we obtain
\begin{equation}\label{eq.lhs_lower}
\begin{split}
\int_{B_0\setminus E_Q} \frac{\lvert u(x)\rvert^{p-\varepsilon}}{d(x,E_Q)^{\beta(p-\varepsilon)}}\,d\mu(x)
&\ge 
R^{-\beta(p-\varepsilon)}\int_{B_0\setminus E_Q} \lvert u(x)\rvert^{p-\varepsilon}\,d\mu(x)\\
&\ge R^{-\beta(p-\varepsilon)}\int_{B_0\setminus E_{Q,r}} \lvert u(x)\rvert^{p-\varepsilon}\,d\mu(x)
\\&\ge R^{-\beta(p-\varepsilon)}\mu(B(w,R)\setminus E_{Q,r})
\\& \ge 2^{-1}R^{-\beta(p-\varepsilon)}\mu(B(w,R))\,,
\end{split}
\end{equation}
where the last step follows from~\eqref{eq.compl_meas}.

Since $u=1$ in $X\setminus E_{Q,r}$ and
$u$ is a $\beta$-H\"older function with a constant $r^{-\beta}$, 
Lemma \ref{l.glueing} implies that
$g=r^{-\beta}\mathbf{1}_{E_{Q,r}}\in\mathcal{D}_H^{\beta}(u)$.
Observe that
\[
\int_{B_0} g^{p-\varepsilon}\,d\mu\le r^{-\beta(p-\varepsilon)}\mu(E_{Q,r}\cap B_0)=
r^{-\beta(p-\varepsilon)}\mu(E_{Q,r}\cap B(w,R))\,.
\]
Hence, the claim~\eqref{eq.codim_est} follows from \eqref{eq.lhs_lower} and
Theorem \ref{t.improved}. 
\end{proof}

The following  theorem is a compilation of the results in this paper. It states the equivalence of some geometrical conditions (1)--(2) and analytical conditions (3)--(6), one of which is the capacity density condition.
We emphasize that the capacity density condition (3)
is characterized in terms of the upper Assouad codimension (1); in fact, this characterization follows
immediately from Theorem~\ref{t.assouad_sufficient} and Theorem \ref{t.converse}.

\begin{theorem}\label{t.main_characterization}
Let $X$ be a complete geodesic space.  Let
$1<p<\infty$ and $0<\beta\le 1$. Let $E\subset X$ be a closed 
set. Then the following conditions are equivalent:
\begin{itemize}
\item[(1)] $\ucodima(E)<\beta p$.
\item[(2)] $E$ satisfies the Hausdorff content
density condition \eqref{e.hausdorff_content_density} for some $0<q<\beta p$.
\item[(3)] $E$ satisfies the $(\beta,p)$-capacity density condition.
\item[(4)] $E$ satisfies the local $(\beta,p,p)$-boundary Poincar\'e inequality \eqref{e.local_boundary_poincare}.
\item[(5)] $E$ satisfies the maximal $(\beta,p,p)$-boundary Poincar\'e inequality
\eqref{e.loc_des_C}.
\item[(6)] $E$ satisfies the local $(\beta,p,p)$-Hardy inequality \eqref{e.int}.
\end{itemize}
\end{theorem}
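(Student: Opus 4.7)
I would establish the equivalences by running one cycle through the analytic conditions together with a separate treatment of the geometric equivalence $(1)\Leftrightarrow(2)$. The cycle is
\[
(1)\;\Rightarrow\;(3)\;\Rightarrow\;(4)\;\Rightarrow\;(5)\;\Rightarrow\;(6)\;\Rightarrow\;(1).
\]
The forward arrows are essentially at hand: $(1)\Rightarrow(3)$ is Theorem~\ref{t.assouad_sufficient}, $(3)\Rightarrow(4)$ is Lemma~\ref{l.b_poincare}, $(3)\Rightarrow(5)$ is Theorem~\ref{t.main_boundedness}, and $(3)\Rightarrow(6)$ is Theorem~\ref{t.wannebo_local}. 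The remaining arrows along the cycle are obtained by reusing the proofs of the preceding sections with lighter hypotheses.

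For $(4)\Rightarrow(5)$ I would inspect the proof of Theorem~\ref{t.main_boundedness}, noting that the capacity density assumption enters only once, via the single application of Lemma~\ref{l.b_poincare} inside the proof of Lemma~\ref{l.mainl_local}. Since $(4)$ is precisely the output of Lemma~\ref{l.b_poincare}, the entire maximal-function machinery of Section~\ref{s.main} runs under the sole assumption $(4)$. For $(5)\Rightarrow(6)$ I would combine the pointwise bound
\[
\lvert u(x)\rvert \le C\, d(x,E_Q)^\beta\bigl(M^{\sharp,p}_{\beta,\mathcal{B}_0}u(x)+M^{E_Q,p}_{\beta,\mathcal{B}_0}u(x)\bigr),\qquad x\in Q^*,
\]
from Lemma~\ref{l.arm_local} with $(5)$ to bound the Hardy integral over $Q^*\setminus E_Q$, and then handle the part over $B(w,R)\setminus Q^*$ exactly as in the second half of the proof of Theorem~\ref{t.wannebo_local}, which only uses the unconditional $(\beta,p,p)$-Poincar\'e inequality from Theorem~\ref{t.pp_poincare}.

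The linchpin is $(6)\Rightarrow(1)$. Here I would observe that the proof of Theorem~\ref{t.improved} uses the $(\beta,p)$-capacity density condition only through Theorem~\ref{t.wannebo_local}, whose conclusion is the Hardy inequality~\eqref{e.int}, i.e.\ condition~$(6)$. Thus, starting from $(6)$ alone, the Koskela--Zhong truncation argument of Theorem~\ref{t.improved} self-improves the local Hardy inequality to one with exponent $p-\varepsilon$ for some $\varepsilon>0$. Plugging the test function $u(x)=\min\{1,r^{-\beta}d(x,E_Q)^\beta\}$ into the improved Hardy inequality, exactly as in the proof of Theorem~\ref{t.converse}, yields the strict bound $\ucodima(E)\le\beta(p-\varepsilon)<\beta p$, which is~$(1)$. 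For the separate equivalence $(1)\Leftrightarrow(2)$, the direction $(1)\Rightarrow(2)$ follows by choosing $q$ with $\ucodima(E)<q<\beta p$ and invoking Lemma~\ref{l.measure_to_hausdorff}; the reverse $(2)\Rightarrow(1)$ is the standard Assouad-codimension/Hausdorff-content characterization (cf.~\cite{MR3631460}), obtained by a Vitali $5r$-covering of $E\cap\overline{B(x,R)}$ by balls centered at $E$ with radii bounded by $r$, which are automatically contained in $E_r$.

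\textbf{Main obstacle.} The crucial step is $(6)\Rightarrow(1)$, converting an analytic inequality at the critical exponent $p$ into a \emph{strict} geometric improvement. A direct test-function argument as in Theorem~\ref{t.easy_converse} only delivers the nonstrict bound $\ucodima(E)\le\beta p$; the strictness is inaccessible without a self-improvement mechanism. The Koskela--Zhong truncation behind Theorem~\ref{t.improved} provides it, but in our Haj{\l}asz setting the standard Wannebo route via a pointwise Leibniz/chain rule is unavailable. Instead, the nonlocal glueing lemma (Lemma~\ref{l.glueing}) is used to paste the truncated function to $u|_{E_Q}$ through a McShane extension, producing admissible test functions whose Haj{\l}asz $\beta$-gradients can still be controlled pointwise; this is the one place where the interplay between the nonlocal calculus of Section~\ref{s.abstract} and the analytic machinery of Sections~\ref{s.main}--\ref{s.local_hardy} is genuinely delicate.
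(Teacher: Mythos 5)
Your proposal is correct and follows essentially the same route as the paper: the same cycle through the analytic conditions, with the same key observations that the capacity density condition enters the maximal-function machinery only through Lemma~\ref{l.b_poincare} (giving $(4)\Rightarrow(5)$) and that Theorem~\ref{t.improved} consumes only the conclusion of Theorem~\ref{t.wannebo_local}, so $(6)$ self-improves and the test function from Theorem~\ref{t.converse} yields $(1)$. The only (cosmetic) divergence is in how $(2)$ is wired in: the paper runs $(1)\Rightarrow(2)\Rightarrow(3)$ using Lemma~\ref{l.codim_sufficient} with $\eta=q/\beta$, whereas you prove $(2)\Rightarrow(1)$ directly by a $5r$-covering argument and invoke Theorem~\ref{t.assouad_sufficient} for $(1)\Rightarrow(3)$ — both are valid, and your covering argument for $(2)\Rightarrow(1)$ does go through.
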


\begin{proof} 
The  implication from (1) to (2) is a consequence of Lemma \ref{l.measure_to_hausdorff} with $\ucodima(E)<q<\beta p$. The implication from (2) to (3) 
follows by adapting the proof of Theorem~\ref{t.assouad_sufficient} with $\eta=q/\beta$.
The implication from (3) to (4) follows from Theorem \ref{l.b_poincare}. The implication
from (4) to (5) follows from the proof of Theorem \ref{t.main_boundedness}, which remains valid if we assume (4) instead of the $(\beta,p)$-capacity density condition.
The implication from (5) to (6) follows from the proof
of Theorem \ref{t.wannebo_local}.
Finally, condition (6) implies the improved
local Hardy inequality \eqref{e.himproved} and 
the proof of Theorem \ref{t.converse} then shows the remaining implication from (6) to (1).
\end{proof}

Finally, we state the  main result of this paper, Theorem~\ref{t.main-improvement}.  It is the self-improvement or  double open-endedness 
property of the $(\beta,p)$-capacity density condition. Namely, in addition to 
integrability exponent $p$, also the order $\beta$ of fractional differentiability can be lowered.
A similar phenomenon is observed 
in \cite{MR946438} for Riesz capacities in $\R^n$.

\begin{theorem}\label{t.main-improvement}
Let $X$ be a  complete geodesic space, and let
$1<p<\infty$ and $0<\beta\le 1$. Assume that
a closed  set $E\subset X$ satisfies the $(\beta,p)$-capacity density condition.
Then there exists $0<\delta<\min\{\beta,p-1\}$ such that 
$E$ satisfies the $(\gamma,q)$-capacity density condition for all
$\beta-\delta<\gamma\le 1$ and $p-\delta<q<\infty$.
\end{theorem}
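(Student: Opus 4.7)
The plan is to deduce this theorem directly from the characterization in Theorem~\ref{t.main_characterization}, which already contains the essential analytic content. The key observation is that the geometric condition (1), namely $\ucodima(E)<\beta p$, is manifestly open-ended in both $\beta$ and $p$: a strict inequality survives a small perturbation of either parameter. So the strategy is to translate the capacity density condition into the dimensional inequality, perturb, and translate back.

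In more detail, I would first apply the implication (3)$\Rightarrow$(1) of Theorem~\ref{t.main_characterization} to conclude that
\[
\ucodima(E)<\beta p.
\]
Set $\eta=\beta p-\ucodima(E)>0$. I would then choose
\[
\delta=\min\left\{\tfrac{\beta}{2},\,\tfrac{p-1}{2},\,\tfrac{\eta}{2(\beta+p)}\right\}>0,
\]
so that $0<\delta<\min\{\beta,p-1\}$ as demanded by the statement. For any exponents $\gamma\in(\beta-\delta,1]$ and $q\in(p-\delta,\infty)$, a direct computation gives
\[
\gamma q>(\beta-\delta)(p-\delta)=\beta p-\delta(\beta+p)+\delta^2>\beta p-\delta(\beta+p)\ge \beta p-\tfrac{\eta}{2}>\ucodima(E).
\]
Since $1<q<\infty$ and $0<\gamma\le 1$, the pair $(\gamma,q)$ lies in the parameter range to which Theorem~\ref{t.main_characterization} applies. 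Applying now the implication (1)$\Rightarrow$(3) of that theorem to the pair $(\gamma,q)$, we conclude that $E$ satisfies the $(\gamma,q)$-capacity density condition, as desired.

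The honest statement is that there is no obstacle left at this stage: the real difficulty of the paper is concentrated in Theorem~\ref{t.main_characterization}, and in particular in the implication (3)$\Rightarrow$(1), which required the chain of maximal boundary Poincar\'e inequalities (Theorem~\ref{t.main_boundedness}), the local Hardy inequality (Theorem~\ref{t.wannebo_local}) and its Koskela--Zhong self-improvement (Theorem~\ref{t.improved}), together with the reverse passage through Hausdorff contents (Lemma~\ref{l.measure_to_hausdorff} and Lemma~\ref{l.codim_sufficient}). Once these are in hand, the double open-endedness of the $(\beta,p)$-capacity density condition follows from the trivial fact that a strict inequality between real numbers is stable under small perturbations of the factors on its right-hand side, as exhibited by the computation above.
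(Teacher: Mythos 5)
Your proposal is correct and follows essentially the same route as the paper: both deduce $\ucodima(E)<\beta p$ from Theorem~\ref{t.main_characterization}, choose $\delta$ small enough that $\ucodima(E)<(\beta-\delta)(p-\delta)\le\gamma q$ for all admissible $(\gamma,q)$, and then apply the characterization in the reverse direction. The only difference is cosmetic — you give an explicit formula for $\delta$ where the paper uses the limit $\lim_{\delta\to0}(\beta-\delta)(p-\delta)=\beta p$ — and your verification that $q>1$ and $\gamma>0$ via $\delta<\min\{\beta,p-1\}$ is exactly what is needed.
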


\begin{proof}
We have $\ucodima(E)< \beta p$ by Theorem \ref{t.main_characterization}.
Since $\lim_{\delta\to 0}(\beta-\delta)(p-\delta)=\beta p$,
there exists
$0<\delta<\min\{\beta,p-1\}$ such that
$\ucodima(E)<(\beta-\delta)(p-\delta)$. 
Now if $\beta-\delta<\gamma\le 1$ and $p-\delta<q<\infty$, then
\[
\ucodima(E)<(\beta-\delta)(p-\delta)<\gamma q\,.\]
The claim
follows from Theorem \ref{t.main_characterization}.
\end{proof}

A similar argument shows that the analytical conditions (4)--(6) in Theorem \ref{t.main_characterization} are also doubly open ended.  The geometrical conditions (1)--(2) are open-ended by definition.

\def\cprime{$'$} \def\cprime{$'$} \def\cprime{$'$}

\end{document}